\documentclass[12pt]{amsart}

\usepackage{amsmath}
\usepackage{amsthm}
\usepackage{amssymb}
\usepackage{enumerate}
\usepackage{xcolor}
\usepackage{comment}

\usepackage{pst-node}
\usepackage{tikz-cd}

\usepackage[colorlinks=true]{hyperref}
\usepackage{hyperref}
\usepackage{cleveref}

\newtheorem{Theorem}{Theorem}[section]
\newtheorem{Lemma}[Theorem]{Lemma}
\newtheorem{Corollary}[Theorem]{Corollary}
\newtheorem{Proposition}[Theorem]{Proposition}
\newtheorem{Comments}[Theorem]{Comments}
\newtheorem{Remark}[Theorem]{Remark}

\newtheorem{Example}[Theorem]{Example}

\newtheorem{Definition}[Theorem]{Definition}
\newtheorem{Problem}[Theorem]{Problem}

\newtheorem{Question}[Theorem]{Question}

\newcommand{\trdeg}{\mbox{\rm trdeg}\,}

\newcommand{\depth}{\operatorname{depth}}

\newcommand{\spec}{\text{Spec}}

\newcommand{\sat}{\text{sat}}

\newcommand{\ass}{\text{Ass}}
\newcommand{\htt}{\text{ht}}

\renewcommand{\red}{\text{red}}

\newcommand{\gr}{\operatorname{gr}}
\newcommand{\grade}{\operatorname{grade}}

\title{Some Remarks about Saturation of Ideals}

\keywords{saturation, Cohen-Macaulay, flat, nilradical, epsilon multiplicity, associated graded ring}
\subjclass[2020]{13H10, 13H15, 13C13}
\date{\today}

\begin{document}

\author[]{Souvik Dey}
\address[S. Dey]
{Department of Mathematical Sciences
850 West Dickson Street, University of Arkansas
Fayetteville, Arkansas 72701}
\email{souvikd@uark.edu}

\author[]{Stephen Landsittel}
\address[S. Landsittel]
{Institute of Mathematics, Hebrew University, Givat Ram, Jerusalem 91904, Israel}
\email{stephen.landsittel@mail.huji.ac.il}

\begin{abstract}
In this paper we observe when saturation of ideals in a local ring $R$ commutes with extension along ring maps and initial ideals. We give a characterization in terms of Cohen--Macaulayness for when this happens along the map $R\to R/\text{nil}(R)$. We give several examples and non examples where this happens, and we demonstrate an application of this condition to epsilon multiplicity. Additionally, we show that saturation commutes with extension along a flat injection $R\to S$ of local rings if and only if the closed fiber of the injection is Artinian.
\end{abstract}

\maketitle

\section{Introduction}


Given an algebraic variety $X$ defined by the vanishing of an ideal $I$ in a polynomial ring, the \emph{saturation} of $I$ is an enlarged version $I^{\sat}$ of $I$ which defines the same variety. From a contemporary standpoint of commutative algebra, the saturation of an ideal $I$ in a local ring $(R,\mathfrak{m})$ can be thought of as the asymptotic colon ideal
\begin{equation*}
    I^{\sat} := I:\mathfrak{m}^{\infty} = \cup_{i\geq 1}I:\mathfrak{m}^i.
\end{equation*}
If $I^{\sat} = I$, or equivalently, $H^0_\mathfrak{m}(R/I)=0$, then we say that $I$ is \emph{saturated}. Saturation has played a significant role in commutative algebra in the study of multiplicity theory, primary decomposition, and of other operations on ideals such as symbolic powers (see \cite{BenO1} and \cite[page 2]{CL}), and integral closure (see \cite{UV}) for instance. Recently saturation has been studied and computed in combinatorial contexts, see for instance \cite{SatEdge, SatMon}. Since saturation of an ideal is equivalent to removing the embedded component, see for instance Remark \ref{rmk-calc} (i), if $I$ is a graded ideal in a polynomial ring, $I^{\sat}$ is the largest ideal containing $I$ that defines the same projective scheme. Saturation is also discussed in a somewhat different, but also geometric light, in \cite{SatMon}.\\

Given local rings $R$ and $S$, we refer to an \emph{operation} $\mathcal{J}$ on ideals in $R$ as a mapping from ideals $I$ of $R$ to ideals $\mathcal{J}(I)$ in $S$. Often we are interested in cases where $I\subset \mathcal{J}(I)$ and $S = R$, e.g. integral closure, tight closure, radical, or saturation. In particular, saturation $I\mapsto I^{\sat}$ is a closure operation, see \cite{epstein} and Remark \ref{rmk-calc}. It would be convenient in studying multiplicity theory, primary decomposition, and in studying other operations on ideals and rings to understand how saturation behaves in relation to other operations on ideals. More specifically, we pose the following general problem which is of general interest across several areas of commutative algebra.

\begin{Problem}\label{problem-general_operations}
    Given a map $R\to S$ of local rings and an operation $\mathcal{J}$ from ideals $I$ in $R$ to ideals $\mathcal{J}(I)$ in $S$, find sufficient conditions (on $R$, $S$, and $\mathcal{J}$), or a characterization, for when saturation commutes with the operation, that is when $\mathcal{J}(I^{\sat}) = (\mathcal{J}(I))^{\sat}$ for all ideals $I$ in $R$.
\end{Problem}

Problem \ref{problem-general_operations} is quite broad and envelops a large sector of problems in commutative algebra. In the present work, we discuss Problem \ref{problem-general_operations} when $\mathcal{J}$ is extension along a wide variety of ring maps. More specifically, we investigate when saturation commutes with a given local ring map $R\to S$, or equivalently, when

\begin{equation}\tag{$\diamond$}\label{eq-main}
    I^{\sat}S = (IS)^{\sat}.
\end{equation}
The condition (\ref{eq-main}) always fails for $\mathfrak{m}$-primary ideals $I$ if $R\to S = R[[x_1,\ldots,x_n]]$ is the inclusion into a power series ring. However, we prove in three theorems (Theorems \ref{thmA}, \ref{thmB}, and \ref{thmC} below) that occurrence of the condition (\ref{eq-main}) is not uncommon, and it reveals a variety of properties of the map $R\to S$, of the rings $R$ and $S$, and properties of some of their numerical invariants. In other words, the occurrence of (\ref{eq-main}) (or lack thereof) carries a vast amount of information about the ring map $R\to S$. 

Theorem \ref{thmA} concerns the case where $R\to S$ is surjective, Theorem \ref{thmB} concerns the case where $R\to S$ is injective, and Theorem \ref{thmC} applies the condition (\ref{eq-main}) to a numerical invariant of Ulrich and Validashti called the epsilon multiplicity. If $R$ is a local domain, then for any local ring extension $R\subset S$, the condition (\ref{eq-main}) occurs at $I = (0)$ if and only if the dimension of $R$ is positive, see Corollary \ref{rmk-zerodim}. In Section \ref{sec-gr} we study the interaction between saturation and taking the initial form $I^*\subset \gr_{\mathfrak{m}}(R)$ of an ideal $I$ in a local domain $R$.\\

In Theorem \ref{thmA}, we show in part that if $R$ is Cohen--Macaulay, $S:= R_{\red}$, and
\begin{equation*}
    \begin{split}
        &\text{the condition (\ref{eq-main}) holds for all complete intersection ideals $I\subset R$}
    \end{split}
\end{equation*}
then $S$ is Cohen--Macaulay.

\begin{Theorem}\label{thmA} 
    Let $(R, \mathfrak m)$ be a Cohen--Macaulay local ring and let $J\subset R$ be an ideal. Let $S = R/J$ and let $t=\depth S$. Then the following conditions are equivalent.
    \begin{enumerate}
        \item[(I)] $S$ is Cohen--Macaulay and $\htt(J)=0$.
        \item[(II)]
        \ref{eq-main} holds for every ideal $I$ in $R$ such that $I$ is generated by $t$ elements which are regular on $R$ and $S$.
        \end{enumerate}
\end{Theorem}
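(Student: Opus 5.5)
The plan is to first rewrite condition (\ref{eq-main}) for the surjection $R\to S=R/J$. For an ideal $I\subset R$ we have $I^{\sat}S=(I^{\sat}+J)/J$ and $(IS)^{\sat}=((I+J):\mathfrak m^\infty)/J$, so (\ref{eq-main}) says exactly that $I^{\sat}+J=(I+J)^{\sat}$ in $R$. Throughout I read ``generated by $t$ elements which are regular on $R$ and $S$'' as ``generated by a sequence $x_1,\dots,x_t$ that is a regular sequence on both $R$ and $S$''. Set $d=\dim R=\depth R$. I will use repeatedly that
\[
t=\depth S\le \dim S\le \dim R=d .
\]

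\textbf{(I)$\Rightarrow$(II).} Assume $S$ is Cohen--Macaulay and $\htt(J)=0$. Then $J$ lies in a minimal prime of $R$. Since a Cohen--Macaulay local ring is equidimensional, this gives $\dim S=d$, and hence $t=d$.

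Now let $I=(x_1,\dots,x_d)$ with the $x_i$ a regular sequence on $R$ and on $S$. Since $R$ is Cohen--Macaulay, $\dim R/I=0$, so $I$ is $\mathfrak m$-primary and $I^{\sat}=R$. Hence $I^{\sat}S=S$. On the other side, $IS$ is $\mathfrak m S$-primary in $S$, so $(IS)^{\sat}=S$. Thus (\ref{eq-main}) holds, and this direction is essentially formal.

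\textbf{(II)$\Rightarrow$(I).} The first step is to produce one test ideal. I need $x_1,\dots,x_t\in\mathfrak m$ forming a regular sequence on both $R$ and $S$. I will build it inductively by prime avoidance. Suppose $x_1,\dots,x_{i-1}$ are chosen with $i\le t$. Then $R/(x_1,\dots,x_{i-1})$ has depth $d-i+1>0$ and $S/(x_1,\dots,x_{i-1})S$ has depth $t-i+1>0$. So $\mathfrak m$ is not contained in the finite union of the associated primes of these two modules, and I can choose $x_i$ outside that union. I expect this to be the only step requiring any care, and it is standard. The case $t=0$ is included by taking $I=(0)$.

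Let $I=(x_1,\dots,x_t)$. Since $R$ is Cohen--Macaulay, $R/I$ is Cohen--Macaulay of dimension $d-t$.

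Suppose, for a contradiction, that $d>t$. Then $\depth R/I>0$, so $\mathfrak m\notin\ass(R/I)$ and $I^{\sat}=I$. Consequently $I^{\sat}S=IS$. On the other hand, $\depth S/IS=t-t=0$, so $\mathfrak m\in\ass(S/IS)$. This means $H^0_{\mathfrak m}(S/IS)\ne 0$, i.e. $(IS)^{\sat}\ne IS$. This contradicts (\ref{eq-main}) for $I$, which holds by (II).

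Therefore $t=d$. The chain $t\le\dim S\le d$ then forces $\depth S=\dim S=d$, so $S$ is Cohen--Macaulay and $\dim R/J=\dim R$. The latter says $J$ is contained in a prime $P$ with $\dim R/P=d$, that is, a minimal prime of $R$. Hence $\htt(J)=0$, which completes the equivalence.
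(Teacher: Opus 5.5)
Your proof is correct and follows the paper's argument. You use the same test ideal, generated by a regular sequence on $R\oplus S$, and the same contradiction: $I^{\sat}=I$ because $\depth R/I=d-t>0$, while $(IS)^{\sat}\neq IS$ because $\depth S/IS=0$. The only difference is organizational: you extract the single conclusion $t=d$, which gives Cohen--Macaulayness of $S$ and $\htt(J)=0$ at once, whereas the paper splits this into Lemma \ref{propCM} and Lemma \ref{lem-posht} and treats $t=0$ separately.
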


We shall call a Noetherian scheme weakly reduced if all of the local rings of $X$ at closed points satisfy \ref{eq-main} at every ideal.
\begin{Corollary}
    Let $X$ be a Noetherian weakly reduced scheme. If $X$ is Cohen--Macaulay, then so is $X_{\red}$.
\end{Corollary}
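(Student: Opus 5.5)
The plan is to reduce to Theorem \ref{thmA} at closed points, and then pass to arbitrary points by localization. I read the definition of weakly reduced as saying that for every closed point $x\in X$, condition (\ref{eq-main}) holds for every ideal of $R=\mathcal{O}_{X,x}$ along the surjection $R\to R_{\red}=R/\operatorname{nil}(R)$.

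\emph{Closed points.} Fix a closed point $x$ and set $R=\mathcal{O}_{X,x}$, $J=\operatorname{nil}(R)$ and $S=R/J=R_{\red}$. Since $X$ is Cohen--Macaulay, $R$ is a Cohen--Macaulay local ring, so we are in the setting of Theorem \ref{thmA}. Condition (II) of that theorem asks for (\ref{eq-main}) only for certain ideals, namely those generated by $t=\depth S$ elements regular on $R$ and $S$. By weak reducedness, (\ref{eq-main}) holds for every ideal, so (II) holds trivially. Theorem \ref{thmA} then gives (I), and in particular $S=(\mathcal{O}_{X,x})_{\red}$ is Cohen--Macaulay. As a consistency check, $\htt(J)=0$ holds automatically, since the nilradical is contained in every minimal prime. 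Finally, $\mathcal{O}_{X_{\red},x}=(\mathcal{O}_{X,x})_{\red}$, so $X_{\red}$ is Cohen--Macaulay at every closed point.

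\emph{Arbitrary points.} Let $y\in X$ be any point. The closure $\overline{\{y\}}$ is a closed subscheme of a Noetherian scheme, so it is quasi-compact and therefore contains a point $x$ that is closed in $\overline{\{y\}}$, hence closed in $X$. Then $y$ corresponds to a prime $\mathfrak p$ of $\mathcal{O}_{X,x}$, and $\mathcal{O}_{X_{\red},y}=\bigl((\mathcal{O}_{X,x})_{\red}\bigr)_{\mathfrak p}$, because localization commutes with taking the nilradical. A localization of a Cohen--Macaulay local ring at a prime is Cohen--Macaulay, so $\mathcal{O}_{X_{\red},y}$ is Cohen--Macaulay. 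Hence $X_{\red}$ is Cohen--Macaulay.

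The only genuinely nontrivial step is the implication (II)$\Rightarrow$(I) of Theorem \ref{thmA}, which we take as given. The main thing to get right here is the passage from closed points to all points, which is exactly where the Noetherian hypothesis is used: it guarantees that every point specializes to a closed point.
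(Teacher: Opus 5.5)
Your proposal is correct and follows the paper's route. The paper treats the corollary as an immediate consequence of Theorem \ref{thmA} (equivalently Corollary \ref{prop1}) applied to the local rings at closed points, where weak reducedness makes condition (II) hold trivially. Your additional step, passing from closed points to arbitrary points by specialization and localization, is a detail the paper leaves implicit, and you handle it correctly.
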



The condition (II) happens plainly if $J = (0)$. We give examples of rings $R$ where the condition (II) does and does not happen when $J$ is the nilradical $N$ of $R$ as follows
\begin{enumerate}
    \item[(i)]In Remark 4.6 we show that (II) occurs when $\dim(R)=1$ and $R$ has a unique minimal prime.
    \item[(ii)]We give a class of $2$-dimensional rings which satisfy property (II) in Proposition \ref{prop-dim2}.
    \item[(iii)] In Proposition \ref{dim2thm2} we show that condition (II) is satisfied when $\dim(R)=2$ and the weaker statement $((g)+J)^{\sat}= (g)^{\sat}+J$ for every regular element $g$ on $R/J$ holds.
    \item[(iv)] In Example \ref{ex1} we provide an example of a complete intersection local ring for which property (II) fails
\end{enumerate}
We also show in Lemma \ref{lem2.0} that we can always complete $R$ in determining whether (II) holds. In Proposition \ref{proptest0} we show that we may enlarge the ideal $I:=(x_1,\ldots,x_t)$ of (II) to $I^{\sat}$ in computing whether $(I+J)^{\sat}=I^{\sat}+J$ (as in $I^{\sat}S = (IS)^{\sat}$) holds. We show in Example \ref{ex1} that there is a Cohen--Macaulay complete local ring $R$ such that $R/N$ is not Cohen--Macaulay and hence (II) can fail. Moreover, we describe a possible generalization of Theorem \ref{thmA} in Comments \ref{com1}, which we show fails in Example \ref{ex-depth2}.\\

When the map of local rings $R\to S$ we are studying is injective and flat, the equation (\ref{eq-main}) has additional characterizations. More specifically, in Theorem \ref{thmB} we show (in part) that if $R\to S$ is a flat extension of local rings, then (\ref{eq-main}) holds for all ideals $I\subset R$ if and only if the closed fiber of $R\to S$ has finite length. We say that an extension of local rings $(R,\mathfrak{m},k)\subset (S,\mathfrak{n},\ell)$ is unramified if $k\to \ell$ is separable and algebraic and $\mathfrak{m}S = \mathfrak{n}$. An extension of local rings is formally \'{e}tale if it is unramified and flat. Theorem \ref{thmB} shows that an extension $R\subset S$ being formally \'{e}tale is sufficient for equation (\ref{eq-main}) to hold at every ideal $I$ in $R$.

\begin{Theorem}\label{thmB}
    Let $(R,\mathfrak{m})\subset (S,\mathfrak{n})$ be an extension of local rings such that
    \begin{equation*}
        \text{(I:J)S = (IS):(JS) for all ideals $I,J\subset R$ (e.g. if $R\to S$ is flat).}
    \end{equation*}
    Recall the condition (\ref{eq-main}) on an ideal $I\subset R$, which states that $I^{\sat}S = (IS)^{\sat}$. The following statements are equivalent.
    \begin{enumerate}
        \item[(i)] (\ref{eq-main}) holds for all ideals $I$ in $R$.
        \item[(ii)](\ref{eq-main}) holds for some $\mathfrak{m}$-primary ideal $I$ in $R$. 
        \item[(iii)] $\sqrt{\mathfrak{m}S}=\mathfrak{n}$.
        \item[(iv)] For some $\mathfrak{m}$-primary ideal $I$ in $R$, the limit $\lim_{n\to\infty}\ell_S(((I^n)^{\sat}S/(I^nS)^{\sat})$ exists and equals zero. 
    \end{enumerate}
    In particular, (\ref{eq-main}) holds for all ideals $I$ in $R$ if the extension $R\subset S$ is \'{e}tale.
\end{Theorem}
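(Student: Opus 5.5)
The plan is to rewrite both sides of (\ref{eq-main}) as colon ideals in $S$ and compare them. By hypothesis, $(I:J)S=(IS):(JS)$ for all ideals. Extension of ideals commutes with directed unions, and $(\mathfrak m^i)S=(\mathfrak mS)^i$. Hence for every ideal $I\subset R$,
\begin{equation*}
I^{\sat}S=\bigcup_{i\ge1}(I:\mathfrak m^i)S=\bigcup_{i\ge 1}\bigl(IS:(\mathfrak mS)^i\bigr)=IS:(\mathfrak mS)^\infty .
\end{equation*}
By definition, $(IS)^{\sat}=IS:\mathfrak n^\infty$. So (\ref{eq-main}) says exactly that saturating $IS$ with respect to $\mathfrak mS$ gives the same result as saturating with respect to $\mathfrak n$. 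Throughout we use that the extension is local, so $\mathfrak mS\subseteq\mathfrak n$. For the parenthetical claim, flatness gives $(I:J)S=IS:JS$ for finitely generated $J$, which covers all $J$ since $R$ is Noetherian.

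\emph{(iii)$\Rightarrow$(i).} If $\sqrt{\mathfrak mS}=\mathfrak n$, then since $S$ is Noetherian there is a $k$ with $\mathfrak n^k\subseteq\mathfrak mS\subseteq\mathfrak n$. The chains of colons by powers of $\mathfrak mS$ and by powers of $\mathfrak n$ are then cofinal in each other. Hence $IS:(\mathfrak mS)^\infty=IS:\mathfrak n^\infty$ for every $I$, which is (i) by the displayed identity.

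\emph{(i)$\Rightarrow$(ii)} is trivial.

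\emph{(ii)$\Rightarrow$(iii).} Let $I$ be $\mathfrak m$-primary. Then $\mathfrak m^j\subseteq I$ for some $j$, so $I^{\sat}=R$ and $I^{\sat}S=S$. Condition (\ref{eq-main}) therefore forces $(IS)^{\sat}=S$. This means $1\in IS:\mathfrak n^k$ for some $k$, i.e.\ $\mathfrak n^k\subseteq IS\subseteq\mathfrak mS$. Thus $\mathfrak n\subseteq\sqrt{\mathfrak mS}\subseteq\mathfrak n$.

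\emph{(iv).} Every power $I^n$ of an $\mathfrak m$-primary ideal is again $\mathfrak m$-primary, so $(I^n)^{\sat}=R$. The module in (iv) is then $S/(I^nS)^{\sat}$, and its length is $0$ exactly when (\ref{eq-main}) holds for $I^n$.
\begin{itemize}
\item If (i) holds, every term of the sequence is $0$, which gives (iv).
\item If (iv) holds, the lengths are nonnegative integers tending to $0$, hence eventually $0$. So (\ref{eq-main}) holds for the $\mathfrak m$-primary ideal $I^n$ with $n$ large, which gives (ii).
\end{itemize}
This closes the cycle (i)$\Rightarrow$(ii)$\Rightarrow$(iii)$\Rightarrow$(i) together with (i)$\Leftrightarrow$(iv).

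\emph{The \'etale case.} An \'etale extension is flat with $\mathfrak mS=\mathfrak n$, so (iii) holds trivially and the hypothesis on colons is satisfied.

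The only real step is the opening reduction of $I^{\sat}S$ to $IS:(\mathfrak mS)^\infty$. There the care needed is to pass extension through the infinite union and to apply the colon hypothesis with $J=\mathfrak m^i$. Everything after that is elementary.
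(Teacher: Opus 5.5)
Your proof is correct and follows essentially the paper's argument. You show (ii)$\Rightarrow$(iii) by noting that $I^{\sat}=R$ forces $\mathfrak n^k\subseteq IS\subseteq\mathfrak mS$, and (iii)$\Rightarrow$(i) by applying the colon hypothesis with $J=\mathfrak m^t$ together with cofinality coming from $\mathfrak n^k\subseteq\mathfrak mS\subseteq\mathfrak n$; you handle (iv) by passing to a power $I^n$. The only cosmetic difference is that you package (iii)$\Rightarrow$(i) as the single identity $I^{\sat}S=IS:(\mathfrak mS)^\infty$, whereas the paper proves the two inclusions separately, using Lemma \ref{lem2.1} for $I^{\sat}S\subseteq (IS)^{\sat}$.
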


Since a reduced Artinian local ring is a field, Theorem \ref{thmB} has the following corollary, which states a characterization of when an extension of local rings is unramified, in terms of saturation.

\begin{Corollary}
    Let $(R,\mathfrak{m},k)\subset (S,\mathfrak{n},l)$ be an extension of local rings such that the fiber $S/\mathfrak{m}S$ of $R\to S$ is reduced, $k\subset l$ is algebraic, and $k$ has characteristic zero. Then (\ref{eq-main}) holds for some $\mathfrak{m}$-primary ideal $I$ in $R$ if and only if the inclusion $R\subset S$ is unramified.
\end{Corollary}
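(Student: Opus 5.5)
The plan is to deduce the Corollary from Theorem \ref{thmB}, using the equivalence (ii)$\Leftrightarrow$(iii). Since the statement makes no separate flatness assumption, I read the hypotheses as including the standing colon-commutation hypothesis of Theorem \ref{thmB} (for instance, $R\to S$ flat); this is the setting in which that theorem applies. Under this reading, (\ref{eq-main}) holds for some $\mathfrak{m}$-primary ideal if and only if $\sqrt{\mathfrak{m}S}=\mathfrak{n}$. So it suffices to show that, under the stated hypotheses on the fiber and residue fields, $\sqrt{\mathfrak{m}S}=\mathfrak{n}$ holds exactly when $R\subset S$ is unramified.

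For the direction "unramified $\Rightarrow$ (\ref{eq-main})", unramified means $\mathfrak{m}S=\mathfrak{n}$. Hence $\sqrt{\mathfrak{m}S}=\mathfrak{n}$, and Theorem \ref{thmB} gives (\ref{eq-main}) for every ideal, in particular for some $\mathfrak{m}$-primary one.

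For the converse, assume (\ref{eq-main}) holds for some $\mathfrak{m}$-primary ideal. Theorem \ref{thmB} then gives $\sqrt{\mathfrak{m}S}=\mathfrak{n}$. Consequently the fiber $F=S/\mathfrak{m}S$ is a Noetherian local ring whose maximal ideal $\mathfrak{n}/\mathfrak{m}S$ is its nilradical, so $F$ is zero-dimensional, i.e. Artinian. By hypothesis $F$ is reduced, and a reduced Artinian local ring is a field, so $\mathfrak{n}/\mathfrak{m}S=0$, i.e. $\mathfrak{m}S=\mathfrak{n}$. It remains to check the residue field condition: $k\subset l$ is algebraic by hypothesis, and it is separable because $\operatorname{char} k=0$. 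Hence the extension is unramified.

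The only potential obstacle is the implicit hypothesis needed for Theorem \ref{thmB}, namely that $(I:J)S=(IS):(JS)$ (e.g. flatness). I would state explicitly that this is inherited from the setting of Theorem \ref{thmB}. Apart from that, every step is a direct combination of Theorem \ref{thmB} with the fact that a reduced Artinian local ring is a field.
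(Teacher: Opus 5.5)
Your proof is correct and follows the paper's route: Theorem~\ref{thmB} (ii)$\Leftrightarrow$(iii) gives $\sqrt{\mathfrak{m}S}=\mathfrak{n}$, so the fiber is a reduced Artinian local ring, hence a field, and characteristic zero supplies separability. The colon/flatness hypothesis you are worried about is not actually needed. For $\mathfrak{m}$-primary $I$ one has $I^{\sat}S=S$, so (\ref{eq-main}) just says $IS$ is $\mathfrak{n}$-primary, i.e.\ $\sqrt{\mathfrak{m}S}=\mathfrak{n}$, and for the implication from unramified you can simply take $I=\mathfrak{m}$ instead of invoking (iii)$\Rightarrow$(i).
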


Note that if $(R,\mathfrak{m})$ is a local ring and $\mathfrak{m}$ contains a nonzero divisor $y$ (for instance when $R$ is a domain of positive dimension), then for large $t$, $ (0)^{\sat}\subset (0):y^t = (0)$, as in $(0)^{\sat} = (0)$ so that (\ref{eq-main}) holds for $I = (0)$, where $S$ is any local ring extension of $R$. Thus the conclusions of Theorem \ref{thmB} are generally not equivalent to (\ref{eq-main}) holding for some ideal $I$.

In \cite{H1}, extensions of local rings, such that the generic fiber of the inclusion map is trivial, are studied. By  \cite[Proposition 2.2]{H1} and Theorem \ref{thmB} we obtain the following corollary concerning trivial generic fiber complete (TGF-complete) extensions, which gives sufficient conditions for the condition (\ref{eq-main}) to hold at every ideal.

\begin{Corollary}\label{corA}
    Let $R\subset S$ be a flat extension of complete local domains where $R$ is one-dimensional. If the contraction of every nonzero ideal of $S$ (to $R$) is nonzero (i.e. $R\to S$ is TGF-complete), then (\ref{eq-main}) holds for every ideal $I\subset R$.
\end{Corollary}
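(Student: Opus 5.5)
The plan is to reduce Corollary \ref{corA} to criterion (iii) of Theorem \ref{thmB}. Since $R\subset S$ is flat, colons extend: for ideals $I,J\subset R$ we have $(I:J)S=(IS):(JS)$, because $J$ is finitely generated and flatness preserves finite intersections and colons by finitely generated ideals. So the hypothesis of Theorem \ref{thmB} holds. It then suffices to show $\sqrt{\mathfrak{m}S}=\mathfrak{n}$, that is, that the closed fiber $S/\mathfrak{m}S$ is Artinian, and (i) of Theorem \ref{thmB} gives (\ref{eq-main}) for every ideal $I\subset R$.

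To show $\sqrt{\mathfrak m S}=\mathfrak n$, I would argue directly with primes, which is presumably the content of \cite[Proposition 2.2]{H1}. Let $Q$ be a prime of $S$ containing $\mathfrak{m}S$, and suppose $Q\neq\mathfrak{n}$. The idea is to produce a nonzero prime of $S$ contracting to $(0)$ in $R$, contradicting the TGF hypothesis. Flatness gives going-down, and $R$ is one-dimensional, so its primes are $(0)$ and $\mathfrak{m}$. Every prime $P\subset S$ therefore satisfies $P\cap R\in\{(0),\mathfrak{m}\}$, and TGF forces $P\cap R=\mathfrak{m}$ whenever $P\neq 0$.

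The obstacle is to produce a nonzero prime of $S$ lying over $(0)$ once $\dim S/\mathfrak{m}S>0$. I would use the flat dimension formula $\dim S=\dim R+\dim S/\mathfrak mS=1+\dim S/\mathfrak mS\geq 2$. Since $S$ is a complete local domain, hence catenary, there is a chain $0\subsetneq P\subsetneq \cdots\subsetneq\mathfrak n$ in which $P$ has height one and $\dim S/P=\dim S-1\ge 1$.

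\begin{itemize}
\item If $P\cap R=(0)$, this already contradicts TGF.
\item Otherwise $P\supseteq\mathfrak mS$. Then $P$ is a minimal prime of $\mathfrak mS$, since $\operatorname{ht}P=1$ and $\mathfrak mS\neq 0$. Choose $0\neq x\in\mathfrak m$. As $\mathfrak mS\subseteq\sqrt{xS}$, $P$ is minimal over $xS$. Because $S$ is catenary and equidimensional, every height-one prime is minimal over some principal ideal. By prime avoidance there is an element $y\in\mathfrak n$ avoiding the finitely many minimal primes of $xS$, and a minimal prime $P'$ of $yS$ then has height one. If $P'\supseteq\mathfrak m$, then $x\in P'$, so $P'$ is a height-one prime containing $x$ and hence minimal over $xS$, which forces $y\notin P'$, a contradiction. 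Hence $P'\cap R=(0)$ while $P'\neq 0$, contradicting TGF.
\end{itemize}

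For prime avoidance to make sense, $\mathfrak n$ must not be minimal over $xS$. This holds because $\operatorname{ht}\mathfrak n\ge 2$, using Krull's principal ideal theorem.

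In either case we get a contradiction, so $\dim S/\mathfrak mS=0$ and $\sqrt{\mathfrak mS}=\mathfrak n$. Theorem \ref{thmB}, (iii)$\Rightarrow$(i), then finishes the proof. The step needing care is the construction of the height-one prime $P'$ avoiding $\mathfrak m$; the rest is formal. If \cite[Proposition 2.2]{H1} already states that a TGF-complete extension of this type has $\dim S=1$, I would simply cite it there instead.
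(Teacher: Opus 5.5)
Your proposal is correct and follows the paper's route. The paper obtains $\sqrt{\mathfrak mS}=\mathfrak n$ for this TGF-complete extension by citing \cite[Proposition 2.2]{H1}, and then applies Theorem \ref{thmB}, (iii)$\Rightarrow$(i), exactly as you do; your prime-avoidance argument is a valid self-contained replacement for that citation. (A height-one prime minimal over some $y$ avoiding the minimal primes of $xS$ cannot contain $0\neq x\in\mathfrak m$, so it lies over $(0)$, contradicting TGF.) That argument in fact needs neither completeness, nor catenarity, nor the preliminary case split on $P$.
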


If $k\subset F$ are any fields, then  $k[[x]]\subset F[[x]]$ is TGF-complete (see  \cite[Remark 2.3]{H1}), so that this ring extension satisfies (\ref{eq-main}) at every ideal of $k[[x]]$. Some additional papers on TGF extensions are \cite{H2, TD}.

The condition (\ref{eq-main}) is also relevant to a certain numerical invariant. We prove in Theorem \ref{thmC} that the condition (\ref{eq-main}) also has application to a numerical invariant of ideals $I$ in a $d$-dimensional local ring $(R,\mathfrak{m})$, called the epsilon multiplicity. The epsilon multiplicity is denoted by $\varepsilon(I)$, and is defined as follows
\begin{equation*}
    \varepsilon(I):= \limsup_{n\to \infty}\frac{\ell(H^0_\mathfrak{m}(R/(I^n))}{n^d/d!}.
\end{equation*}
Epsilon multiplicity has been shown to describe integral dependence in \cite{UV}, and it was shown in \cite{CS} that the positivity of $\varepsilon(I)$ characterizes when $I$ has maximal analytic spread (i.e. when the analytic spread of $I$ equals the dimension of $R$). It was shown in \cite{Land1} that $\varepsilon(I)$ exists as a limit for any ideal $I$ as long as the nildradical $N$ of the $\mathfrak{m}$-adic completion of $R$ has nonmaximal dimension.\\

The following result shows that epsilon multiplicity can always be calculated in a reduced ring when it exists and equation (\ref{eq-main}) is satisfied.

\begin{Theorem}\label{thmC} (Theorem \ref{thm-spread2})
    Let $R$ be a Nagata local ring such that the nilradical of $R$ has nonmaximal dimension, and let $S = R_{\red}$. If (\ref{eq-main}) holds for all ideals in $R$, then
    \begin{equation*}
        \varepsilon(I) = \varepsilon(IR_{\red})
    \end{equation*}
    for any ideal $I\subset R$.
\end{Theorem}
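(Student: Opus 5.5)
The idea is to compare the lengths $\ell_R(H^0_\mathfrak{m}(R/I^n))$ and $\ell_S(H^0_{\mathfrak{n}}(S/I^nS))$ term by term, where $S=R_{\red}=R/N$ and $N$ is the nilradical. The error should be controlled by a module supported on $\operatorname{Supp}(N)$, which has dimension $<d$.

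\textbf{Step 1: reduce to a single power of $I$.} Apply (\ref{eq-main}) to the ideal $I^n$. Since $H^0_\mathfrak{m}(R/I^n)=(I^n)^{\sat}/I^n$, the hypothesis gives
\[
H^0_{\mathfrak n}(S/I^nS)=\bigl((I^nS)^{\sat}\bigr)/I^nS=\bigl((I^n)^{\sat}+N\bigr)/(I^n+N).
\]
Consequently there is a surjection $(I^n)^{\sat}/I^n\twoheadrightarrow H^0_{\mathfrak n}(S/I^nS)$. Its kernel is
\[
\bigl((I^n)^{\sat}\cap(I^n+N)\bigr)/I^n \;=\; \bigl(I^n+((I^n)^{\sat}\cap N)\bigr)/I^n \;\cong\; \bigl((I^n)^{\sat}\cap N\bigr)/(I^n\cap N).
\]
This kernel lies in $H^0_\mathfrak{m}(N/(I^n\cap N))$, so
\[
0\le \ell(H^0_\mathfrak{m}(R/I^n))-\ell(H^0_\mathfrak{m}(S/I^nS))\le \ell\bigl(H^0_\mathfrak{m}(N/(I^n\cap N))\bigr).
\]
Here $\dim S=d$, because $N$ is nilpotent, so both epsilon multiplicities are normalized by $n^d/d!$.

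\textbf{Step 2: bound the error term.} It remains to show $\ell(H^0_\mathfrak{m}(N/(I^n\cap N)))=o(n^d)$. By Artin–Rees, $I^n\cap N\supseteq I^{n-c}N$ for some fixed $c$ and all $n\ge c$. Hence
\[
\ell\bigl(H^0_\mathfrak{m}(N/(I^n\cap N))\bigr)\le \ell\bigl(H^0_\mathfrak{m}(N/I^{n-c}N)\bigr)+\ell\bigl(H^1_\mathfrak{m}((I^n\cap N)/I^{n-c}N)\bigr)\text{-type correction}.
\]
A cleaner route avoids this correction. The kernel is a subquotient of $H^0_\mathfrak{m}(R/I^n)$ that is annihilated by a power of $\mathfrak m$ and lives on the module $N$, whose dimension $\dim N<d$. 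Moreover, the modules $H^0_\mathfrak{m}(M/I^nM)$ for a finitely generated module $M$ have length $O(n^{\dim M})$; this follows from the standard bound $\ell(H^0_\mathfrak{m}(M/I^nM))\le C n^{\dim M}$, proved via the Rees algebra or a fixed power $\mathfrak m^{cn}$ killing these local cohomology modules (Swanson's linear bound). To apply this, I plan to handle $N/(I^n\cap N)$ by choosing $c$ with $\mathfrak{m}^{cn}H^0_\mathfrak{m}(R/I^n)=0$ (linear Swanson-type bound). The kernel is then a submodule of $N/(I^n\cap N)$ killed by $\mathfrak m^{cn}$, so its length is at most $\ell(N/\mathfrak m^{cn}N)=O(n^{\dim N})=o(n^d)$.

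\textbf{Step 3: conclude.} Dividing by $n^d/d!$ and taking $\limsup$ gives $\varepsilon(I)=\varepsilon(IR_{\red})$. The Nagata hypothesis and nonmaximal dimension of the nilradical are also exactly what \cite{Land1} needs for the limits to exist. Nagata is likely used to pass to the completion, since $\widehat{R}_{\red}=\widehat{R_{\red}}$ for Nagata $R$, or to guarantee the linear annihilator bound.

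The main obstacle is the uniform linear bound $\mathfrak m^{cn}H^0_\mathfrak{m}(R/I^n)=0$, or more generally controlling the error term by $O(n^{\dim N})$. I would first try Swanson's theorem on linear growth of primary decompositions, which gives $\mathfrak{m}^{cn}(I^n)^{\sat}\subseteq I^n$. If that is unavailable in this generality, I would fall back on Artin–Rees applied to $N\subseteq R$ together with the Rees-algebra finiteness of $\bigoplus_n H^0_\mathfrak{m}(N/I^nN)$-type modules.
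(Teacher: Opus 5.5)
Your Step 1 is correct. Once (\ref{eq-main}) holds for $I^n$, the defect $\ell(H^0_\mathfrak{m}(R/I^n))-\ell(H^0_{\mathfrak n}(S/I^nS))$ is exactly $\ell(K_n)$ with $K_n=((I^n)^{\sat}\cap N)/(I^n\cap N)$. This is the same identity the paper imports from \cite{Land1} in Remark \ref{rmk-dl}.

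The gap is in Step 2. From the annihilator bound $\mathfrak m^{cn}(I^n)^{\sat}\subseteq I^n$ you conclude that $K_n$, as a submodule of $N/(I^n\cap N)$ killed by $\mathfrak m^{cn}$, has length at most $\ell(N/\mathfrak m^{cn}N)$. That inference is not valid. For $R=k[[x,y]]$ and $M=R/\mathfrak m^2$, the submodule $\mathfrak m M$ is killed by $\mathfrak m$ and has length $2$, while $\ell(M/\mathfrak m M)=1$. An annihilator bound controls the Loewy length of $K_n$, not its length. By itself it only gives $\ell(K_n)\le \mu\bigl((I^n)^{\sat}\cap N\bigr)\,\ell(R/\mathfrak m^{cn})$, which is useless here. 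Your first attempt also has Artin--Rees backwards: it gives $I^nN\subseteq I^n\cap N\subseteq I^{n-c}N$, not $I^n\cap N\supseteq I^{n-c}N$.

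The missing ingredient is the full primary-decomposition form of Swanson's theorem, not just its annihilator consequence. It says there is $c$ such that for every $n$ one can write $I^n=(I^n)^{\sat}\cap Q_n$ with $\mathfrak m^{cn}\subseteq Q_n$. Here $Q_n$ is the $\mathfrak m$-primary component, or $R$ if there is none; equivalently, $(I^n)^{\sat}\cap\mathfrak m^{cn}\subseteq I^n$. Then the kernel of $(I^n)^{\sat}\cap N\to N/(N\cap\mathfrak m^{cn})$ lies in $I^n\cap N$. So $K_n$ is a quotient of a submodule of $N/(N\cap \mathfrak m^{cn})$, and $\ell(K_n)\le\ell(N/\mathfrak m^{cn}N)=O(n^{\dim N})=o(n^d)$. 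This is precisely the estimate $\ell\bigl(((I^n+N)\cap(I^n)^{\sat})/I^n\bigr)\le\ell(N/N\mathfrak m^{nb})$ that the paper cites from \cite{Land1}.

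With that repair your argument is correct and more direct than the paper's. The paper transfers the hypothesis to $\widehat R$ via Lemma \ref{lem2.0}, which needs $N(\widehat R)=N(R)\widehat R$ and hence the Nagata hypothesis. It then applies Remark \ref{rmk-dl} in the complete ring, which rests on the limit results of \cite{Land1}, and descends through Proposition \ref{propeps}. Your term-by-term comparison needs no completion, no Nagata hypothesis and no existence of limits: adding an $o(n^d)$ sequence does not change the $\limsup$. It also uses (\ref{eq-main}) only for the powers of $I$.
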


\begin{Question}
    Let $R$ be a Nagata local ring such that the nilradical of $R$ has nonmaximal dimension, and let $S = R_{\red}$. Suppose that $R$ and $S$ are Cohen--Macaulay. By Theorem \ref{thmA}, (\ref{eq-main}) holds for $I$. Do we have that (\ref{eq-main}) holds for all powers of $I$? If so, then the proof of Theorem \ref{thmC} shows that $\varepsilon(I) = \varepsilon(IR_{\red})$. As in, we would have a version of Theorem \ref{thmC} where we do not need to assume that (\ref{eq-main}) holds for all ideals.
\end{Question}

Some additional papers on epsilon multiplicity are \cite{CHST,CL, SuD, DDRV,DRT}.\\

The structure of our paper is as follows. In Section \ref{sec1} we give some general remarks about saturation which will be useful in the proofs of the main theorems and are useful in general for calculating saturation, and we also prove Theorem \ref{thmB}. In Section \ref{sec2} we study the case of a ring map $R\to S$ when $S$ is $R$ modulo its nilradical, and we prove Theorem \ref{thmA}. In Section \ref{sec3} we recall some machinery regarding epsilon multiplicity from \cite{Land1} and we prove Theorem \ref{thmC}. In Section \ref{sec4} we discuss when the hypothesis of Theorem \ref{thmA} occur, and we give some conditions for when this happens. In Section \ref{sec5} we give some general conditions for when the hypothesis of Theorem \ref{thmA} occurs when the ambient ring $R$ has dimension two. In Section \ref{sec-seq}, we give an application of computing saturation to sequential Cohen-Macaulayness. Finally, in Section \ref{sec-gr} we study the behavior of saturation under the natural set map $R\to \gr_{\mathfrak{m}}(R)$.

\section{General properties of Saturation and Proof of Theorem \ref{thmB}}\label{sec1}

In this section we describe some properties of saturation which will be relevant to the proofs of the main theorems. In addition, we give some remarks which show that saturation detects positivity of the dimension of of a local ring. In particular, we show in Corollary \ref{rmk-zerodim} that a local domain has positive dimension if and only if the zero ideal is saturated. We also prove Theorem \ref{thmB} in this section.\\

Throughout this paper, local rings are assumed to be Noetherian. Whenever have a local ring $R$, $\mathfrak{m}_R$ will denote its maximal ideal unless stated otherwise. Extensions of local rings $R\to S$ are assumed to be local maps (i.e. $\mathfrak{m}_R\subset \mathfrak{m}_S$).\\

The following remark is useful for a variety of computations and proofs involving saturation.

\begin{Remark}\label{rmk-calc}
    Let $(R,\mathfrak{m})$ be a local ring and let $I\subset R$ be an ideal. Then we have the following.
    \begin{enumerate}
    \item[(i)] Let
    \begin{equation*}
        I = Q_1\cap \cdots \cap Q_r
    \end{equation*}be an irredundant primary decomposition of $I$. Then either $\mathfrak{m}\not\in \ass_R(I)$ and $I = I^{\sat}$, or
    \begin{equation*}
        I^{\sat} = Q_1\cap \cdots \cap \widehat{Q_i} \cap \cdots Q_r
    \end{equation*}
    where $Q_i$ is the $\mathfrak{m}$-primary component.
        \item[(ii)]
        $I^{\sat} = I:\mathfrak{m}^t$ for sufficiently large positive integers $t$.
        \item[(iii)] $I^{\sat}$ is not $\mathfrak{m}$-primary. Thus a saturated ideal in a local ring cannot be $\mathfrak{m}$-primary.
        \item[(iv)]The mapping $I\mapsto I^{\sat}$ defines a closure operation.
        \item[(v)]We have that $I^{\sat}=R$, if and only if, $I=R$ or $I$ is $\mathfrak{m}$-primary. 
    \end{enumerate}
\end{Remark}

Before proving of Remark \ref{rmk-calc}, we give some discussion on the utility of Remark \ref{rmk-calc} (ii). One might ask how high of a $t$ one needs to go up to in order to compute the saturation. The minimal $t$ for which $I:\mathfrak{m}^t = I:\mathfrak{m}^{t+1}$ is called the \emph{saturation number} of $I$. Saturation numbers are discussed and computed for instance in \cite{HGR, SatMon}.

\begin{proof}(i) and (ii) follow from  \cite[Lemma 1.2.2, 1.2.4]{Land2}. We will prove (iii), (iv), and (v). 

Now we prove (iii). Suppose that $I^{\sat}$ is $\mathfrak{m}$-primary. Then $I$ is not $\mathfrak{m}$-primary, since that would imply $I^{\sat} = R$. Thus $I$ is not saturated. So $I$ has an irredundant primary decomposition $I = Q_1\cap \cdots\cap Q_r$ where $\sqrt{Q_1} = m$, and consequently,
\begin{equation*}
    I^{\sat} = Q_2\cap \cdots \cap Q_r
\end{equation*}
which contradicts the fact that $I^{\sat}$ is $m$-primary (none of the $Q_i$ for $i>1$ are $\mathfrak{m}$-primary since the decomposition $I = Q_1\cap \cdots\cap Q_r$ is irredundant with $\sqrt{Q_1} = \mathfrak{m}$). This completes the proof of (iii).

Now we prove (iv). If $I\subset J$ are ideals in $R$, then the facts that $I^{\sat}\subset J^{\sat}$ and $I\subset I^{\sat}$ follow from the definition of saturation $I^{\sat}  = \cup_{j\geq 1}I:\mathfrak{m}^j$. The fact that $(I^{\sat})^{\sat} = I^{\sat}$ follows from (i).

Finally we prove (v). If $I=R$ or $I$ is $\mathfrak{m}$-primary, then $I^{\sat}=R$ by (ii). Conversely, if $I^{\sat} = R$, then (again by (ii)) let $t>0$ be a positive integer satisfying $R = I^{\sat} = I:\mathfrak{m}^t$. We have $1\in I:\mathfrak{m}^t$ so that $I$ contains a power of $m$. This completes the proof of the remark.
\end{proof}

When calculating multiplicities involving local cohomology, sometimes it is relevant whether $(0)^{\sat}$ is always the zero ideal. It holds when $R$ is a local ring and has a nonunit nonzero divisor. On the other hand, the following remark describes when it is possible that the zero ideal is not saturated. In Corollary \ref{rmk-zerodim} we characterize when $(0)^{\sat}=(0)$. Before proving Corollary \ref{rmk-zerodim}, we mention the following general lemma which can be useful for calculating saturation.

\begin{Lemma}\label{rmk-calc2}
    Let $(R,\mathfrak{m})$ be a local ring of positive dimension, then a radical ideal is saturated if and only if it is not $\mathfrak{m}$-primary.
\end{Lemma}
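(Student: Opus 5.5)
The plan is to prove the two implications separately, using the primary decomposition description of saturation in Remark \ref{rmk-calc}.

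\emph{A radical ideal that is not $\mathfrak{m}$-primary is saturated.} Let $I$ be radical and not $\mathfrak{m}$-primary. Since $I$ is radical, it is the intersection of its minimal primes, $I = P_1\cap\cdots\cap P_r$, and this is an irredundant primary decomposition because each $P_i$ is prime and the $P_i$ are pairwise incomparable. In particular $\ass_R(R/I)=\{P_1,\ldots,P_r\}$. Suppose $\mathfrak{m}\in\ass_R(R/I)$, say $\mathfrak{m}=P_1$. Every prime $P_i$ is contained in $\mathfrak{m}$, so minimality and incomparability force $r=1$ and $I=\mathfrak{m}$. But then $I$ is $\mathfrak{m}$-primary, a contradiction. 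Hence $\mathfrak{m}\notin\ass_R(R/I)$, and Remark \ref{rmk-calc} (i) gives $I=I^{\sat}$. One could instead argue directly: if $x\mathfrak{m}^t\subset I$, then $x\mathfrak{m}^t\subset P_i$ for each $i$, and since $\mathfrak{m}\not\subset P_i$, primality gives $x\in P_i$. Either route works.

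\emph{A saturated ideal is not $\mathfrak{m}$-primary.} By Remark \ref{rmk-calc} (iii), an $\mathfrak{m}$-primary ideal $I$ cannot be saturated, since $I^{\sat}=R\neq I$. It remains to handle the case $I=R$. The unit ideal is trivially saturated, since $R:\mathfrak{m}^\infty=R$. So the statement should be read with $I$ a proper ideal, or with "$\mathfrak{m}$-primary" understood so that $R$ is excluded. I would note this convention explicitly.

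The only point that seems to need care is the role of the positive-dimension hypothesis. It ensures $\mathfrak{m}$ is not a minimal prime of $R$, so that $(0)$ and its nilradical are not automatically $\mathfrak{m}$-primary. In the argument above, however, that case is already absorbed by the case analysis. I expect no real obstacle; the key step is observing that $\mathfrak{m}$ can be an associated prime of a radical ideal only when the ideal equals $\mathfrak{m}$.
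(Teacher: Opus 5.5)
Your proof is correct and takes essentially the same route as the paper: write the radical ideal as an irredundant intersection of primes, note that $\mathfrak{m}$ can occur among them only if $I=\mathfrak{m}$, and conclude by Remark \ref{rmk-calc} (i), with the other direction coming from Remark \ref{rmk-calc} (iii)/(v). Your side remarks are also accurate: the paper treats the unit ideal separately in the same way (it is saturated, and since primary ideals are proper it is not $\mathfrak{m}$-primary, so your convention worry resolves itself), and the positive-dimension hypothesis is indeed never used.
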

\begin{proof}
    Let $I\subset R$ be a radical ideal. Suppose that $I$ is saturated. If $I = R$ then we are done, so assume not. Then we have that $I$ is not $\mathfrak{m}$-primary by Remark \ref{rmk-calc} (v). Conversely, suppose that $I$ is not $m_R$-primary. If $I = R$ we are done, since $R^{\sat} = R$, so we can assume that $I\subset \mathfrak{m}$. So $I$ has a unique irredundant primary decomposition $I = P_1\cap\cdots\cap P_r$ where $P_1,\ldots,P_r$ are prime ideals. Since $I$ is not $\mathfrak{m}$-primary, at least one of the $P_i$ is not equal to $\mathfrak{m}$. Then by irredundancy of $I = P_1\cap\cdots\cap P_r$, none of $P_1,\ldots,P_r$ equal $\mathfrak{m}$. Consequently, $I$ is saturated by Remark \ref{rmk-calc} (i).
\end{proof}

We can find when the zero ideal is saturated in a general local ring via the following corollary.

\begin{Remark}\label{rmk-zerodim}Let $R$ be a local ring.
    \item[(1)] For any ideal $I\subset R$, $I^{\sat} = I$ if and only if $\depth (R/I)>0$. In particular, $\depth(R)>0$ if and only if $(0)^{\sat} = (0)$.
    \item[(2)] If $R$ has positive dimension, then the nilradical of $R$ is saturated. 
\end{Remark}
\begin{proof}

For (1), let $\mathfrak m$ be the maximal ideal of $R$. We notice that $H^0_{\mathfrak m}(R/I)=\dfrac{I^{sat}}{I}$. Since, depth$(R/I)>0$ if and only if $H^0_{\mathfrak m} (R/I)=0$, thus the claim follows.  

    Now we prove (2). Since $N = \sqrt{(0)}$, it is radical. Then since $\dim(R)>0$, $N$ is not $\mathfrak{m}_R$-primary. Now the result follows from Lemma \ref{rmk-calc2}.
\end{proof}

Thus if $R$ is a local domain of positive dimension with nilradical $N$, then we have that $(0)$ and $N$ are both saturated. Now we continue working towards proving Theorem \ref{thmB}. Next we will obtain the following two lemmas, the first of which implies that the limit in Theorem \ref{thmB} (iv) is well defined.

\begin{Lemma}
    Let $(R,\mathfrak{m})\mapsto (S,\mathfrak{n})$ be an extension of local rings and let $J$ be an ideal in $R$. If for sufficiently large integers $t$, $(JS):[(\mathfrak{m}S)^t]\subset (J:\mathfrak{m}^t)S$ (e.g. if $R\to S$ is flat), then $(JS)^{\sat}\subset J^{\sat}S$.
\end{Lemma}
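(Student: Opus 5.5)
The plan is to compare the two saturations through a single colon ideal at a common large exponent. The key fact is that in the Noetherian ring $S$ the saturation with respect to $\mathfrak{n}$ can be computed using any ideal with the same radical. So the first step is to understand $(JS)^{\sat}$ in terms of colons by powers of $\mathfrak{m}S$.

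\emph{Caution.} Since $\mathfrak{m}S\subseteq\mathfrak{n}$, we have $(JS):(\mathfrak{m}S)^t\supseteq (JS):\mathfrak{n}^t$ for every $t$. Taking unions gives
\begin{equation*}
(JS)^{\sat}\;\subseteq\;\bigcup_{t\geq 1}(JS):(\mathfrak{m}S)^t .
\end{equation*}
The right-hand side equals $(JS)^{\sat}$ exactly when $\sqrt{\mathfrak{m}S}=\mathfrak{n}$. The lemma only needs this one inclusion, so no radical hypothesis is required.

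The argument then runs in three steps.
\begin{enumerate}
\item[(a)] By Remark \ref{rmk-calc} (ii) applied in $S$, choose $t_0$ with $(JS)^{\sat}=(JS):\mathfrak{n}^{t}$ for all $t\geq t_0$. By Remark \ref{rmk-calc} (ii) applied in $R$, also take $t_0$ large enough that $J^{\sat}=J:\mathfrak{m}^t$ for all $t\geq t_0$.
\item[(b)] Enlarge $t$ further so that the hypothesis $(JS):[(\mathfrak{m}S)^t]\subseteq (J:\mathfrak{m}^t)S$ holds. Then, since $(\mathfrak{m}S)^t=\mathfrak{m}^tS\subseteq \mathfrak{n}^t$, we get the chain
\begin{equation*}
(JS)^{\sat}=(JS):\mathfrak{n}^t\subseteq (JS):(\mathfrak{m}S)^t\subseteq (J:\mathfrak{m}^t)S=J^{\sat}S .
\end{equation*}
\item[(c)] For the parenthetical flat case, flatness gives $(J:\mathfrak{m}^t)S=(JS):(\mathfrak{m}^tS)$. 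This is the standard fact that flat base change commutes with colons by finitely generated ideals. It follows by tensoring the exact sequence $0\to J:\mathfrak{m}^t\to R\to \bigoplus_i R/J$ (with the map $r\mapsto (rg_i)$ for generators $g_i$ of $\mathfrak{m}^t$) with $S$, so the hypothesis holds with equality.
\end{enumerate}

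The main point to watch is step (b). The argument must use the colon by $\mathfrak{n}^t$, which computes $(JS)^{\sat}$, and not the colon by $(\mathfrak{m}S)^t$. It must also pick a single $t$ large enough for three conditions at once: saturation in $S$, saturation in $R$, and the hypothesis. Each condition holds for all sufficiently large $t$, because the colon chains stabilize, so a common $t$ exists.
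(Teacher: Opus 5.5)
Your proof is correct and is essentially the paper's argument. You choose a single $t$ large enough that $(JS)^{\sat}=(JS):\mathfrak{n}^t$, $J^{\sat}=J:\mathfrak{m}^t$ and the colon hypothesis all hold, run the chain $(JS)^{\sat}=(JS):\mathfrak{n}^t\subseteq (JS):(\mathfrak{m}S)^t\subseteq (J:\mathfrak{m}^t)S=J^{\sat}S$, and step (c) correctly handles the flat case; your conditions on $t$ are stated more cleanly than in the paper, whose version contains typos.

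One small slip is in your unused ``Caution'' aside: for a fixed $J$, $\bigcup_t (JS):(\mathfrak{m}S)^t$ can equal $(JS)^{\sat}$ without $\sqrt{\mathfrak{m}S}=\mathfrak{n}$ (e.g.\ $J=0$ for $k[[x]]\to k[[x,y]]$). So ``exactly when'' is only true as a statement about all ideals $J$ at once.
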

\begin{proof}
    We can enlarge $t$ if necessary, so that $(JS)^{\sat} = (JS):(\mathfrak{n}^t)$, $J^{\sat} = \mathfrak{m}^t$, and $(JS)^{\sat}\subset J^{\sat}S$. Now we have
    \begin{equation*}
        (JS)^{\sat} = (JS):(\mathfrak{n}^t) \subset (JS):[(\mathfrak{m}S)^t]\subset (J:\mathfrak{m}^t)S = J^{\sat}S.
    \end{equation*}
\end{proof}

\begin{Lemma}\label{lem2.1}
Let $(R,\mathfrak{m})\subset (S,\mathfrak{n})$ be an extension of local rings such that $\sqrt{\mathfrak{m}S} = \mathfrak{n}$. Then for any ideal $I\subset S$ we have
\begin{equation*}
    I^{\sat}S\subset (IS)^{\sat}.
\end{equation*}
\end{Lemma}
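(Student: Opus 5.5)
The statement has a typo: it should read "for any ideal $I\subset R$". The claim is then that $I^{\sat}S\subset (IS)^{\sat}$. I plan to prove this directly from the definition of saturation, using Remark \ref{rmk-calc} (ii) and the fact that $\mathfrak{n}$ is nilpotent modulo $\mathfrak{m}S$.

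First, by Remark \ref{rmk-calc} (ii), choose $t$ with $I^{\sat}=I:\mathfrak{m}^t$. Second, since $S$ is Noetherian and $\sqrt{\mathfrak{m}S}=\mathfrak{n}$, there is an integer $c$ with $\mathfrak{n}^c\subset \mathfrak{m}S$. Hence $\mathfrak{n}^{ct}\subset (\mathfrak{m}S)^t=\mathfrak{m}^tS$.

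Now let $x\in I^{\sat}$, so that $x\mathfrak{m}^t\subset I$. Then
\begin{equation*}
x\,\mathfrak{n}^{ct}\subset x\,\mathfrak{m}^tS\subset IS ,
\end{equation*}
so $x\in (IS):\mathfrak{n}^{ct}\subset (IS)^{\sat}$. Since $(IS)^{\sat}$ is an ideal of $S$ and $I^{\sat}S$ is generated by the images of such $x$, it follows that $I^{\sat}S\subset (IS)^{\sat}$.

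There is no real obstacle here. The only point needing care is producing the uniform exponent $c$, which comes from finite generation of $\mathfrak{n}$: each generator has some power in $\mathfrak{m}S$, and a sufficiently large power of $\mathfrak{n}$ is then generated by products that each contain a high power of some generator. Note that flatness is not needed for this inclusion. It is the reverse inclusion, given by the preceding lemma, that uses the colon hypothesis.
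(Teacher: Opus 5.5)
Your proof is correct and is essentially the paper's argument. You fix $t$ with $I^{\sat}=I:\mathfrak{m}^t$, choose $c$ with $\mathfrak{n}^c\subset\mathfrak{m}S$, and deduce $x\,\mathfrak{n}^{ct}\subset x\,\mathfrak{m}^tS\subset IS$ for every $x\in I^{\sat}$. You are also right that the statement should read $I\subset R$, which is how the paper's proof uses it.
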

\begin{proof}
    It is enough to show that we have the inclusion of sets $I^{\sat}\subset (IS)^{\sat}$. Let $a\in I^{\sat}$. Let $t>0$ be large enough that $I^{\sat} = I:\mathfrak{m}^r$ and $(IS)^{\sat} = (IS):\mathfrak{n}^r$ for $r\geq t$. Then we have
    \begin{equation*}
        a\mathfrak{m}^tS\subset IS.
    \end{equation*}
    On the other hand there is a positive integer $c$ such that $\mathfrak{n}^c\subset \mathfrak{m}S$. Thus
    \begin{equation*}
        a\mathfrak{n}^{tc}\subset a(\mathfrak{m}S)^t = a(\mathfrak{m}^t)S\subset IS
    \end{equation*}
    and we are done.
\end{proof}

Now we prove Theorem \ref{thmB}.

\begin{proof}
    Let $t>0$ be large enough that $I^{\sat} = I:\mathfrak{m}^r$ and $(IS)^{\sat} = (IS):\mathfrak{n}^r$ for $r\geq t$. If $(\mathfrak{m}S)^{\sat} = \mathfrak{m}^{\sat}S$, then
    \begin{equation*}
        S = RS = \mathfrak{m}^{\sat}S = (\mathfrak{m}S)^{\sat} = \mathfrak{m}S:\mathfrak{m}^t_S
    \end{equation*} so that $\mathfrak{m}^t_S\subset \mathfrak{m}S$, and thus $\sqrt{\mathfrak{m}S} = \mathfrak{n}$.

    Suppose on the other hand that $\sqrt{\mathfrak{m}S} = \mathfrak{n}$, take $c>0$ such that $\mathfrak{n}^c\subset \mathfrak{m}S$, and let $I\subset R$ be an ideal. Then
    \begin{equation*}
        \begin{split}
            I^{\sat}S = (I:\mathfrak{m}^t)S = (IS):(\mathfrak{m}^tS)
            \supset (IS):\mathfrak{n}^{tc} = (IS)^{\sat}
        \end{split}
    \end{equation*}
    so that (\ref{eq-main}) holds for $I$ by Lemma \ref{lem2.1}.

    So far we have proven that $(\mathfrak{m}S)^{\sat} = \mathfrak{m}^{\sat}S$ implies that $\sqrt{\mathfrak{m}S} = \mathfrak{n}$, and the latter implies that $(IS)^{\sat}= I^{\sat}S$ for all ideals in $R$, which certainly implies that $(\mathfrak{m}S)^{\sat} = \mathfrak{m}^{\sat}S$. Thus (i) and (iii) are equivalent. Now we will show that these two conditions are both equivalent to (ii). For that it is enough to fix an $\mathfrak{m}$-primary ideal $I\subset R$, assume that (\ref{eq-main}) holds for $I$, and show that (iii) holds. There exists $b>0$ such that $\mathfrak{m}^b\subset I$. Let $t>0$ be large enough that $t\geq b$, $I^{\sat} = I:\mathfrak{m}^t$, and $(IS)^{\sat} = IS:\mathfrak{n}^t$. Then we have
    \begin{equation*}
        I^{\sat} = I:\mathfrak{m}^t \supset \mathfrak{m}^b:\mathfrak{m}^t = R
    \end{equation*}so that
    \begin{equation*}
        S = I^{\sat}S = (IS)^{\sat} = IS:\mathfrak{n}^t.
    \end{equation*}Consequently, $\mathfrak{n}^t\subset IS\subset \mathfrak{m}S$, which implies (iii).

    Now we have shown that (i), (ii), and (iii) are equivalent. If (iv) holds, then for some $n>0$ and some $\mathfrak{m}$-primary ideal $I\subset R$ we have
    \begin{equation*}
        \ell_S((I^nS)^{\sat}/((I^n)^{\sat}S)=0.
    \end{equation*}
    As in, $(I^nS)^{\sat}=(I^n)^{\sat}S$, so that (ii) holds for the $\mathfrak{m}$-primary ideal $I^n$. On the other hand, if (i) holds, then certainly $\ell_S((\mathfrak{m}^nS)^{\sat}/((\mathfrak{m}^n)^{\sat}S)=0$ for all $n$, so that (iv) holds for the ideal $I:=\mathfrak{m}$. This completes the proof.
\end{proof}

\section{Proof of Theorem \ref{thmA}}\label{sec2}
In this section, we first prove some lemmas about saturation which will be useful in studying the condition of when $(I+N)^{\sat} = I^{\sat}+N$ occurs, where $I$ is an ideal in a local ring $R$ and $N$ is the nilradical of $R$. After establishing said lemmas, we prove Theorem \ref{thmA}.

If $R$ is any ring, then we will write $R_{\red}:= R/\sqrt{0}$. If $R$ is any local ring, then by a complete intersection ideal, we mean an ideal generated by a regular sequence $x_1,\ldots,x_i$, where $i\geq 1$. By an ideal generated by a length zero regular sequence, we simply mean the zero ideal.\\

We begin with some remarks which will be used to prove Theorem \ref{thmA}. We note how one of the inclusions in condition (II) of Theorem \ref{thmA} always holds in the following remark.

\begin{Remark}\label{rmk0}
    Let $R$ be a local ring, let $I,J\subset R$ be ideals, and let $S = R/J$. We have the following statements.
    \begin{enumerate}
        \item[(1)] $I^{\sat}+J\subset (I+J)^{\sat}$.
        \item[(2)] $I^{\sat}S=(IS)^{\sat}$ if and only if $(I+J)^{\sat}\subset I^{\sat}+J$.
        \item[(3)] $I^{\sat}S\subset (IS)^{\sat}$.
    \end{enumerate}    
\end{Remark}
\begin{proof}
    By Remark \ref{rmk-calc} we have $I^{\sat}\subset (I+J)^{\sat}$ and $J\subset J^{\sat}\subset (I+J)^{\sat}$, so that (1) holds.

    Now we prove (2), which, due to (1), is equivalent to the statement that: $I^{\sat}S=(IS)^{\sat}$ if and only if $(I+J)^{\sat} = I^{\sat}+J$. We have $I^{\sat}(R/J)=(I(R/J))^{\sat}$ if and only if
    \begin{equation*}
        \frac{I^{\sat}+J}{J} = \bigg(\frac{I+J}{J}\bigg)^{\sat} \bigg( = \frac{(I+J)^{\sat}}{J}\bigg)
    \end{equation*} (the second equality holds by \cite[Remark 2.1 (i)]{Land1}) if and only if $(I+J)^{\sat} = I^{\sat}+J$, which completes the proof.

    Now we prove (3). Again by \cite[Remark 2.1 (i)]{Land1}, we have $\frac{I^{\sat}+J}{J} = I^{\sat}S$ and $\frac{(I+J)^{\sat}}{J} = (IS)^{\sat}$. Consequently (3) holds by (1).
\end{proof}

\begin{Definition}
    Let $R$ be a local ring and let $I,J\subset \mathfrak{m}_R$ be ideals. For a positive integer $t$, we will say that $I$ is generated by a length $t$ regular sequence on $R/J$ if $I$ is generated by elements $x_1,\ldots,x_t\in R$ such that $x_1+J,\ldots,x_t+J$ is a regular sequence on $R/J$. By an ideal generated by a length zero regular sequence on an $R$-module $M$, we simply mean the zero ideal.
\end{Definition}

\begin{Remark}
    An ideal generated by a length $t$ regular sequence on $R/J$ is also generated by a length $t$ regular sequence on $R$ (by  \cite[Exercise 16.1]{Mat} and Krull's Height Theorem).
\end{Remark}

Now we prove three lemmas from which Theorem \ref{thmA} will follow. The first of which is stated below. We note that Ian Aberbach communicated a proof of Corollary \ref{prop1} to the authors, which we have modified to obtain the following lemma.

\begin{Lemma}\label{propCM}
    Let $R$ be a $d$-dimensional Cohen--Macaulay local ring. Let $J\subset R$ be an ideal and let $t = \depth (R/J)$. Suppose that
    \begin{equation*}
        \begin{split}
            &\textit{ (*):  for any complete intersection ideal $I\subset R$ of height exactly $t$ generated by $t$ elements }\\ & \textit{ which are regular on $R/J$, we have that }
            (I+J)^{\sat}\subset I^{\sat}+J.
        \end{split}
    \end{equation*}Then $R/J$ is Cohen--Macaulay.
\end{Lemma}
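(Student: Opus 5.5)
The plan is to prove more than stated: we show that $(*)$ forces $t=\dim R=d$. Since $t=\depth(R/J)\le \dim(R/J)\le d$, this gives $\depth(R/J)=\dim(R/J)$, so $R/J$ is Cohen--Macaulay (and also $\htt(J)=0$, matching condition (I) of Theorem \ref{thmA}). We argue by contradiction, assuming $t<d$.

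\textbf{Step 1: choose a suitable $I$.} We choose $x_1,\ldots,x_t\in\mathfrak m$ inductively so that $x_1,\ldots,x_t$ is regular on both $R$ and $R/J$. Suppose $x_1,\ldots,x_{i-1}$ have been chosen with $i\le t$.
\begin{itemize}
\item $R/(x_1,\ldots,x_{i-1})$ is Cohen--Macaulay of dimension $d-i+1>0$, so its associated primes are minimal and none equals $\mathfrak m$.
\item $R/(J+(x_1,\ldots,x_{i-1}))$ has depth $t-i+1>0$, so $\mathfrak m$ is not associated to it either.
\end{itemize}
Prime avoidance then produces $x_i\in\mathfrak m$ outside the finitely many associated primes of both modules.

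Put $I=(x_1,\ldots,x_t)$. Then $I$ is a complete intersection ideal of height exactly $t$, since a regular sequence in a Cohen--Macaulay ring generates an ideal of height equal to its length. Its generators are regular on $R/J$, so $I$ is an admissible test ideal for $(*)$. When $t=0$ we take $I=(0)$, the length-zero convention.

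\textbf{Step 2: compute $I^{\sat}$.} The ring $R/I$ is Cohen--Macaulay of dimension $d-t>0$, so $\depth(R/I)>0$. By Remark \ref{rmk-zerodim}(1), $I^{\sat}=I$. For $t=0$ this says $(0)^{\sat}=(0)$, since $\depth R=d>0$.

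\textbf{Step 3: reach the contradiction.} Hypothesis $(*)$ gives $(I+J)^{\sat}\subset I^{\sat}+J=I+J$, so $I+J$ is saturated. By Remark \ref{rmk-zerodim}(1), $\depth(R/(I+J))>0$. On the other hand, $x_1,\ldots,x_t$ is a maximal-length regular sequence on $R/J$, so
\[
\depth\big(R/(I+J)\big)=\depth(R/J)-t=0.
\]
Here $I+J\neq R$ because $I+J\subset\mathfrak m$. This contradiction shows $t=d$, which finishes the argument.

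The only delicate point is Step 1: we must choose the elements regular on $R$ and on $R/J$ simultaneously, without $\mathfrak m$ ever becoming an associated prime to avoid. The assumption $t<d$, combined with $R$ being Cohen--Macaulay, is exactly what guarantees this, so the contradiction hypothesis is used there. Note also that Step 2 is the one place where $t<d$ is essential: if $t=d$, then $I$ is $\mathfrak m$-primary and $I^{\sat}=R$, so no contradiction arises.
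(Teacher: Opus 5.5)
Your proof is correct and is essentially the paper's argument. The paper likewise uses non-Cohen--Macaulayness only to get $t<\dim R/J\le d$, takes $I$ generated by a length-$t$ regular sequence on $R\oplus R/J$, and contradicts $(*)$ by setting $I^{\sat}=I$ (since $\depth R/I=d-t>0$) against $\depth R/(I+J)=0$; it treats $t=0$ as a separate case. Assuming only $t<d$, as you do, is a small bonus: it also yields $\htt(J)=0$ and so absorbs Lemma \ref{lem-posht}. One minor correction to your closing remark: Step 1 needs only $t\le d$, which always holds, so $t<d$ is used only in Step 2.
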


\begin{proof}
    Let $\mathfrak{m}$ be the maximal ideal of $R$. Suppose, if possible, that $R/J$ is not Cohen--Macaulay. Let $h = \htt (J)$.\\

    First we address the case when $t=0$, so assume this. We have by assumption that $J^{\sat} = (0)^{\sat}+J$. Since $R/J$ is not Cohen--Macaulay, we have that
    \begin{equation*}
        0=t<\dim R/J = d-\htt J\leq d
    \end{equation*}so that $d>0$. Then since $R$ is Cohen--Macaulay, we have that $\depth R/(0) = d>0$ so that $m\notin \ass_R R/(0)$, as in $(0)^{\sat}=(0)$. Consequently
    \begin{equation*}
        J^{\sat} = (0)^{\sat}+J=J.
    \end{equation*}
    But since $0=\depth R/J$, we have that $J^{\sat}\neq J$, a contradiction. Hence $R/J$ is Cohen--Macaulay.\\
    
    Now we may assume that $t>0$. By hypothesis we have
    \begin{equation*}
        t <\dim(R/J) = d-\htt(J)=d-h\leq d.
    \end{equation*}
    Now
    \begin{equation*}
        \depth_R(R\oplus (R/J)) = \min\{d,t\} = t.
    \end{equation*}Let $x_1,\ldots,x_t$ be a regular sequence on the $R$-module $R\oplus (R/J)$. Set $I = (x_1,\ldots,x_t)$, which is a complete intersection ideal of height $t<d-h\leq d$. Since $\depth(R/I)=d-t>0$, $m\notin \ass_R(R/I)$, so that $I^{\sat}=I$. Hence, $I^{\sat}+J=I+J$. However
    \begin{equation*}
        \depth(R/(I+J))
        = \depth ((R/J)/(I(R/J)))=0.
    \end{equation*}Thus $\mathfrak{m}\in \ass_R(R/(I+J))$. Hence
    \begin{equation*}
        (I+J)^{\sat}\neq I+J = I^{\sat}+J
    \end{equation*}which contradicts the hypothesis (*). Thus $R/J$ is Cohen--Macaulay.
\end{proof}

\begin{Corollary}\label{prop1}
    Let $R$ be a Cohen--Macaulay local ring and let $N$ be the nilradical of $R$. Suppose that
    \begin{equation*}
        \begin{split}
            &\textit{ (**): for any non-$\mathfrak{m}_R$-primary complete intersection ideal $I$, we have that}\\& \text{$(I+N)^{\sat} \subset I^{\sat}+N$.}
        \end{split}
    \end{equation*}Then $R_{\red}$ is Cohen--Macaulay.
\end{Corollary}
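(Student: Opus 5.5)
The plan is to deduce the corollary from Lemma \ref{propCM} with $J=N$. Set $t=\depth(R_{\red})$ and $d=\dim R$. It then suffices to check hypothesis (*) of Lemma \ref{propCM}. So let $I$ be a complete intersection ideal of height exactly $t$, generated by $t$ elements $x_1,\ldots,x_t$ that are regular on $R/N$. By the Remark following the Definition, these elements also form a regular sequence on $R$.

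If $I$ is not $\mathfrak{m}_R$-primary, hypothesis (**) applies directly. It gives $(I+N)^{\sat}\subset I^{\sat}+N$, which is exactly (*) for this $I$. One small point: when $t=0$ we have $I=(0)$, which is not called a complete intersection ideal in the paper's convention ($i\geq 1$). In that case I would either read (**) as including the length-zero case, or treat it separately as follows. If $d>0$, then $(0)^{\sat}=(0)$ because $R$ is Cohen--Macaulay, and $N^{\sat}=N$ by Remark \ref{rmk-zerodim}(2), so (*) holds. If $d=0$, then $R_{\red}$ is a field, and $R_{\red}$ is trivially Cohen--Macaulay.

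The remaining case is when $I$ is $\mathfrak{m}_R$-primary, i.e. $t=\htt I=d$. Since $\dim R_{\red}=\dim R=d$, the inequality $t\leq \dim R_{\red}=d$ is then an equality, so $R_{\red}$ is already Cohen--Macaulay and there is nothing to prove. Put differently, we may assume $t<d$ from the outset. Then every $I$ to which (*) refers has height $t<d$, so it is not $\mathfrak{m}_R$-primary, and (**) covers all of them.

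Once (*) holds, Lemma \ref{propCM} yields that $R/N=R_{\red}$ is Cohen--Macaulay. The only delicate point is this bookkeeping: the $\mathfrak{m}$-primary and $t=0$ cases must be handled before appealing to (**), and the dimension equality $\dim R_{\red}=\dim R$ is what makes the $\mathfrak{m}$-primary case trivial.
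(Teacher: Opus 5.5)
Your proposal is correct and follows the paper's proof. Both arguments reduce to the case $t=\depth R_{\red}<\dim R_{\red}=\dim R$, note that every height-$t$ complete intersection in (*) is then not $\mathfrak{m}_R$-primary, so (**) supplies (*), and conclude by Lemma \ref{propCM} with $J=N$. Your separate handling of $t=0$, which the paper glosses over, is fine, though for $d>0$ that case cannot occur: $N$ is saturated, which forces $\depth R_{\red}>0$.
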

\begin{proof}
    Suppose that $R_{\red}$ is not Cohen--Macaulay and let $t = \depth R_{\red}$. Then $(I+N)^{\sat} \subset I^{\sat}+N$ holds for any complete intersection ideal of height $t$. Then $R_{\red}$ is Cohen--Macaulay by Lemma \ref{propCM}
\end{proof}

\begin{Lemma}\label{lem-posht}
    Let $R$ be a $d$-dimensional Cohen--Macaulay local ring and let $J\subset R$ be an ideal. Let $t=\depth R/J$. Suppose that $\htt(J)>0$ and let $M$ be the $R$-module $R\oplus (R/J)$. Then
        \begin{equation*}
            (I+J)^{\sat}\neq I^{\sat}+J
        \end{equation*}for any ideal $I\subset R$ generated by a length $t$ regular sequence on $M$.
\end{Lemma}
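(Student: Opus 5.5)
The idea is to mimic the second half of the proof of Lemma \ref{propCM}. There, Cohen--Macaulayness failing for $R/J$ gave $t<d$. Here the positivity of $\htt(J)$ plays that role. Throughout I take $J$ to be a proper ideal, so that $\htt(J)$ is finite and $R/J\neq 0$.

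First I record the dimension bound. Since $R$ is Cohen--Macaulay, $\dim(R/J)=d-\htt(J)$. Combined with $\htt(J)>0$, this gives
\begin{equation*}
t=\depth(R/J)\leq \dim(R/J)=d-\htt(J)<d .
\end{equation*}
In particular $d>0$.

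Next I show $I^{\sat}=I$. Let $I=(x_1,\ldots,x_t)$, where $x_1,\ldots,x_t$ is a regular sequence on $M=R\oplus(R/J)$. Then $x_1,\ldots,x_t$ is a regular sequence on $R$, so $R/I$ is Cohen--Macaulay of depth $d-t>0$. When $t=0$, $I=(0)$ and $\depth R=d>0$, so the same conclusion holds. By Remark \ref{rmk-zerodim} (1), $I$ is saturated, and therefore
\begin{equation*}
I^{\sat}+J=I+J .
\end{equation*}

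Finally I show $I+J$ is not saturated. The images of $x_1,\ldots,x_t$ form a maximal regular sequence on $R/J$, because their number equals $t=\depth(R/J)$. Hence
\begin{equation*}
\depth\big(R/(I+J)\big)=\depth\big((R/J)/I(R/J)\big)=t-t=0 .
\end{equation*}
Here $I+J\subseteq\mathfrak{m}$, so $R/(I+J)\neq 0$. Remark \ref{rmk-zerodim} (1) then gives $(I+J)^{\sat}\neq I+J=I^{\sat}+J$, which is the claim.

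There is no serious obstacle; the argument is a direct adaptation of Lemma \ref{propCM}. The only points needing care are the degenerate case $t=0$ (where $I=(0)$ and one needs $\depth R>0$, which is the role of $d>0$) and the properness of $J$ and $I+J$, both handled above.
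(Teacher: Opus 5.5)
Your proposal is correct and follows essentially the same argument as the paper: you use $t\le \dim R/J=d-\htt(J)<d$ to get $\depth R/I=d-t>0$, hence $I^{\sat}=I$, and then $\depth R/(I+J)=0$ to see that $I+J$ is not saturated. The only cosmetic difference is that you handle the case $t=0$ within the same argument, whereas the paper treats it separately.
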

\begin{proof}
First we prove the result when $t = 0$. By definition, an ideal generated by a length zero regular sequence on $M$ is the zero ideal. So we must prove that
\begin{equation*}
    J^{\sat}\neq (0)^{\sat}+J.
\end{equation*}
We have that $0 = \depth R/J$ so that $J^{\sat}\neq J$. Note that $d>0$ since $\htt J>0$. Thus $0<d=\depth R/(0)$ so that $(0)^{\sat} = 0$. Hence $(0)^{\sat}+J= J\neq J^{\sat}$ and the lemma is proven in this case.

    Now we may assume that $t>0$. Always we have
    \begin{equation*}
        \depth(R\oplus (R/J))
        = \depth(R/J) = t.
    \end{equation*}
    Let $x_1,\ldots,x_t$ be a regular sequence on $M$ and let $I = (x_1,\ldots,x_t)$, which is complete intersection of height $t$ since $R$ is Cohen--Macaulay.\\

    Let $h = \htt J$. Note that if $d-t = 0$, we have $h=d-(d-h) = d-\dim(R/J)\leq d-\depth R/J = d-t = 0$, which is impossible as $h>0$. So $d-t>0$. As in, $\depth R/I>0$ so that $m$ is not associated to $R/I$. Thus, $I^{\sat} = I$.\\

    However, we always have
    \begin{equation*}
        \depth R/(I+J) = \depth ((R/J)/I(R/J)) = 0
    \end{equation*}so that
    \begin{equation*}
        (I+J)^{\sat} \neq I+J = I^{\sat}+J
    \end{equation*}which finishes the proof.
\end{proof}

\begin{Lemma}\label{lem-htzero}
    Let $R$ be a $d$-dimensional Cohen--Macaulay local ring and let $J\subset R$ be an ideal. Let $t=\depth R/J$. Suppose that $\htt(J)=0$ and that $R/J$ is Cohen--Macaulay. Then
    \begin{equation*}
            \begin{split}
            &\textit{ for every ideal $I$ generated by a length $t$ regular sequence on $R/J$ and $R$,} \\&(I+J)^{\sat}\subset I^{\sat}+J.
            \end{split}
        \end{equation*}
\end{Lemma}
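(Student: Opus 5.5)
The plan is to exploit that, under these hypotheses, both sides of the inclusion turn out to be the unit ideal, so the statement reduces to a dimension count together with Remark \ref{rmk-calc} (v).

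\textbf{Step 1: $t=d$.} Since $\htt(J)=0$ and $R$ is Cohen--Macaulay, hence equidimensional and catenary, we have $\dim R/J = d-\htt J = d$. Because $R/J$ is Cohen--Macaulay, $t=\depth R/J=\dim R/J=d$. So $I$ is generated by $d$ elements $x_1,\ldots,x_d\in\mathfrak{m}$, and these form a regular sequence on $R$.

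\textbf{Step 2: $I^{\sat}=R$.} First suppose $d=0$. Then $I=(0)$ by the convention for length zero sequences. The ring $R$ is Artinian, so $(0)$ is either $\mathfrak{m}$-primary or equal to $R$. By Remark \ref{rmk-calc} (v), $I^{\sat}=R$, hence $I^{\sat}+J=R$, and the inclusion $(I+J)^{\sat}\subset R$ is trivial.

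Now suppose $d>0$. Since $x_1,\ldots,x_d$ is a regular sequence in the $d$-dimensional Cohen--Macaulay local ring $R$, we get $\htt I=d$. Also $I\subset\mathfrak{m}$, so $\dim R/I = 0$ and $I$ is $\mathfrak{m}$-primary. By Remark \ref{rmk-calc} (v), $I^{\sat}=R$. Therefore $I^{\sat}+J=R\supset (I+J)^{\sat}$, which is the desired inclusion.

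Note that the regularity of the $x_i$ on $R/J$ is not even needed here, beyond fixing the number of generators as $t=d$. The regularity on $R$ alone forces $I$ to be $\mathfrak{m}$-primary.

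\textbf{Main obstacle.} The only point requiring care is Step 1, namely justifying $\dim R/J = d-\htt J$. This uses that a Cohen--Macaulay local ring is equidimensional: some minimal prime $P\supseteq J$ has $\htt P=0$, so $P$ is a minimal prime of $R$ and $\dim R/J\ge\dim R/P = d$. This is the same equality already used in the proofs of Lemmas \ref{propCM} and \ref{lem-posht}, so I would cite it in the same way.
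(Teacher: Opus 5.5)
Your proposal is correct and follows the paper's proof essentially step for step. Both arguments show $t=\depth R/J=\dim R/J=d-\htt J=d$, then observe that $I$ (the zero ideal when $d=0$, a system of parameters when $d>0$) is $\mathfrak{m}$-primary, so $I^{\sat}=R$ and the inclusion is trivial.
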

\begin{proof}
    Let $\mathfrak{m}$ be the maximal ideal of $R$. Let $M$ be the $R$-module $R\oplus (R/J)$.\\

    First we address the case when $t=0$, so assume this. By definition the only ideal generated by a length $t$ regular sequence on $R/J$ is $(0)$, so we only need to prove that $J^{\sat}\subset (0)^{\sat}+J$. Since $R$ and $R/J$ are Cohen--Macaulay we have that
    \begin{equation*}
        0 = \depth R/J = \dim R/J = \dim R - \htt J = d.
    \end{equation*}Thus $\mathfrak{m} = \sqrt{0}$ so that $(0)$ is $\mathfrak{m}$-primary. Hence $(0)^{\sat} = R$ and we have that
    \begin{equation*}
        J^{\sat}\subset R =  (0)^{\sat}+J.
    \end{equation*}
    
    Now the lemma is proven when $t=0$. Suppose that $t>0$. Always $\depth_RM = \depth R/J = t$. Let $x_1,...,x_t$ be a regular sequence on $M$ and let $I = (x_1,\ldots,x_t)$ which is complete intersection of height $t$. Since $R$ and $R/J$ are Cohen--Macaulay,
    \begin{equation*}
        t=\depth R/J = \dim(R/J) = \dim(R)-\htt(J) = d-0=d
    \end{equation*}so that $I$ is generated by a system of parameters on $R$ and is hence $\mathfrak{m}$-primary. Thus,
    \begin{equation*}
        R = I^{\sat}\subset (I+J)^{\sat}
    \end{equation*}forcing $(I+J)^{\sat} = R$. Hence
    \begin{equation*}
        I^{\sat} + J = R=(I+J)^{\sat}
    \end{equation*}which finishes the proof.
\end{proof}

By combining Lemmas \ref{propCM}, \ref{lem-posht}, and \ref{lem-htzero}, the proof of Theorem \ref{thmA} is complete.



The difference between the depth and dimension in a local ring $R$ is called the Cohen--Macaulay defect, which was defined in \cite[D\'{e}finition 16.4.9]{EGA}. In Comments \ref{com1} and Example \ref{ex-depth2} we discuss this defect in relation to Theorem \ref{thmA}. Some papers on Cohen--Macaulay defect are \cite{CI, CI2, AFH}\\

\begin{Comments}\label{com1}
    A natural question would be whether we can extend Theorem \ref{thmA} to the case where $R$ is not Cohen--Macaulay by replacing condition (I) with the statement that $\depth R = \depth R/J$ and $\htt(J) = 0$. However, in Example \ref{ex-depth2} we show that there is a one-dimensional ring for which condition (II) in Theorem \ref{thmA} holds when $J$ is the nilradical $N$ of $R$, and $(\depth R/N = )\text{ } \depth R_{\red} >\depth R$.
\end{Comments}

We proceed with a definition motivated by Theorem \ref{thmA} and some comments regarding when the condition (II) of Theorem \ref{thmA} holds.

\begin{Definition}\label{df1}
    Let $R$ be a ring and let $N$ be the nilradical of $R$. We will say that $R$ is weakly reduced if (i) $R$ is local, and (ii) for every complete intersection ideal $I$ of $R$ we have that
    \begin{equation}\label{eq1.1}
        I^{\sat}+N= (I+N)^{\sat}.
    \end{equation}
    If instead of (ii), equation (\ref{eq1.1}) holds for one particular ideal $I$, then we will say that $R$ is weakly reduced at $I$, or that $I$ is a weakly reduced ideal.
\end{Definition}

We see that a local reduced ring is weakly reduced. Next we give an immediate application of the weakly reduced hypothesis.

\begin{Corollary}
    Let $R$ be a Cohen--Macaulay ring which is weakly reduced at every ideal. Then $R_{\red}$ is Cohen--Macaulay.
\end{Corollary}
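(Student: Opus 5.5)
This should be an immediate consequence of Lemma \ref{propCM} (or Corollary \ref{prop1}) applied with $J = N$, the nilradical of $R$. Since the statement speaks of a ``Cohen--Macaulay ring'' while weak reducedness is defined only for local rings, I will work in the local setting. For the global case, I would pass to localizations $R_\mathfrak{p}$ and assume the hypothesis is meant to hold there. Localization commutes with taking the nilradical, and $(R_{\red})_{\mathfrak{p}} = (R_\mathfrak{p})_{\red}$, so Cohen--Macaulayness of $R_{\red}$ can be checked locally.

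In the local case, set $t = \depth R_{\red} = \depth(R/N)$. By hypothesis, $(I+N)^{\sat} = I^{\sat}+N$ for every ideal $I\subset R$. In particular this holds for every complete intersection ideal $I$ of height $t$ generated by $t$ elements that are regular on $R/N$. When $t=0$ it also holds for the zero ideal, which is the relevant ideal in that case of the proof. Hence hypothesis (*) of Lemma \ref{propCM} is satisfied with $J=N$, and that lemma gives directly that $R/N = R_{\red}$ is Cohen--Macaulay.

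Alternatively, I could note that $\htt N = 0$ and that the hypothesis gives condition (II) of Theorem \ref{thmA}, so (I) holds. Either route involves no real obstacle. The only point needing care is the bookkeeping just described: the ``every ideal'' hypothesis must cover the $t=0$ case, which it does because the zero ideal is included, and the global statement must be reduced to the local one.
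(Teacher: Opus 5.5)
Your proposal is correct and matches the paper, which proves the statement by citing Corollary \ref{prop1}; that corollary is itself just Lemma \ref{propCM} applied with $J=N$. Going straight to Lemma \ref{propCM}, and noting that the ``every ideal'' hypothesis covers the zero ideal in the $t=0$ case, is the same argument with the bookkeeping made explicit.
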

\begin{proof}
    The result follows immediately from Corollary \ref{prop1}.
\end{proof}

By Remark 2.1 \cite{Land1}, equation (\ref{eq1.1}) is equivalent to saying that
\begin{equation*}
    (I(R/N))^{\sat} = I^{\sat}(R/N).
\end{equation*} By Remark \ref{rmk0} and Remark 2.1 \cite{Land1}, we always have that $I^{\sat}(R/N)\subset (I(R/N))^{\sat}$.\\

Using work of Hartshorne in \cite{RH} we obtain the following example, which shows that a complete intersection local ring need not be weakly reduced.
\begin{Example}\label{ex1}
    Let $F$ be an algebraic closure of $\mathbb{Z}/p\mathbb{Z}$ and let
    \begin{equation*}
        A = F[x,y,z,w]/(y^3+xz^2,x^8+w^2x^5y+w^4x^2y^2+w^6z^2).
    \end{equation*}Let $\mathfrak{M} = A_+$ be the graded maximal ideal and let $R = \widehat{A_{\mathfrak{M}}}$. Then $R$ is a complete intersection, while $R_{\red}$ is not Cohen--Macaulay. In particular, by Corollary \ref{prop1}, $R$ is a complete local ring which is not weakly reduced.
\end{Example}
The authors thanks Anurag Singh for bringing the ring $A$ in Example \ref{ex1} to their attention.\\

The preceding example shows that the $\depth(R_{\red})$ can be less than $\depth(R)$. A natural question would be to ask do we have for a local ring $R$ that $\depth R_{\red}\leq \depth R$. The next example show that this can fail even if $R$ is complete.

\begin{Example}\label{ex-depth}
    For all $n\geq 1$ there exists an $n$-dimensional depth zero complete local ring $(R,\mathfrak{m})$ such that $R_{\red}$ is Cohen-Macaulay, namely
    \begin{equation*}
        R:= k[[x_1,\ldots,x_n,x_{n+1}]]/(x_{n+1}^2,x_{n+1}x_1,\ldots,x_{n+1}x_n)
    \end{equation*}
    where $k$ is any field.
\end{Example}
\begin{proof}
    Fix a field $k$. First we show that $\depth R=0$. If $f\in \mathfrak{m}_R$, then $x_{n+1}f=0$, which shows that $\mathfrak{m}_R$ consists entirely of zero divisors.\\

    Next we show that $\depth R_{\red}=n$. We have
    \begin{equation*}
        \begin{split}
            R_{\red} &=
        (k[[x_1,\ldots,x_n,x_{n+1}]]/(x_{n+1}^2,x_{n+1}x_1,\ldots,x_{n+1}x_n))_{\red}
        \\&= \frac{k[[x_1,\ldots,x_n,x_{n+1}]]/(x_{n+1}^2,x_{n+1}x_1,\ldots,x_{n+1}x_n)}{(x_{n+1})/(x_{n+1}^2,x_{n+1}x_1,\ldots,x_{n+1}x_n)}
        =k[[x_1,\ldots,x_n]].
        \end{split}
    \end{equation*}This finishes the proof.
\end{proof}

\section{Proof of Theorem \ref{thmC}}\label{sec3}

Throughout the rest of this manuscript, we will denote the nilradical of a ring $A$ by $N(A)$. In this section we will prove Theorem \ref{thmB}, which we restate for convenience in Theorem \ref{thmC}.

\begin{Definition}\label{df-nagata} (\cite[pg. 264]{Mat})
    A domain $R$ with field of fractions $K$ is called Japanese if for any finite field extension $L$ of $K$, the integral closure of $R$ in $L$ is a finite $R$ module. We say that a Noetherian ring $R$ is Nagata if for every prime ideal $P$ of $R$ we have that $R/P$ is Japanese.
\end{Definition}

\begin{Example}
    Every a quasi-excellent ring is Nagata. In particular, complete local rings are Nagata.
\end{Example}

Recall the condition (\ref{eq-main}) from the introduction on an ideal $I$ in a local ring $R$ with nilradical $N$, in the case that $S:= R/N$, which states that
\begin{equation}\label{eq-main2}
    I^{\sat}(R/N) = (I(R/N))^{\sat}.
\end{equation}

Now we recall Theorem \ref{thmC}.

\begin{Theorem}\label{thm-spread2}
    Let $R$ be a Nagata local ring such that (\ref{eq-main2}) holds for every ideal $I$ in $R$. Let $d = \dim(R)$ and suppose that $\dim(N(R))<d$. Then $\varepsilon(I) = \varepsilon(IR_{\red})$ for any ideal $I$ of $R$.
\end{Theorem}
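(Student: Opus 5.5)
Apply hypothesis (\ref{eq-main2}) to each power $I^n$ and compare the lengths $\ell_R(H^0_\mathfrak{m}(R/I^n))$ and $\ell(H^0_{\mathfrak{m}}(S/I^nS))$, where $S=R_{\red}$ and $N=N(R)$. As in Remark \ref{rmk0}, $H^0_\mathfrak{m}(R/I^n)=(I^n)^{\sat}/I^n$. Applied to $I^n$, hypothesis (\ref{eq-main2}) says $(I^n+N)^{\sat}=(I^n)^{\sat}+N$, so
\begin{equation*}
H^0_\mathfrak{m}(S/I^nS)=\frac{(I^n+N)^{\sat}}{I^n+N}=\frac{(I^n)^{\sat}+N}{I^n+N}\cong \frac{(I^n)^{\sat}}{(I^n)^{\sat}\cap (I^n+N)}.
\end{equation*}
Hence
\begin{equation*}
\ell(H^0_\mathfrak{m}(R/I^n)) - \ell(H^0_\mathfrak{m}(S/I^nS)) = \ell\Big(\frac{(I^n)^{\sat}\cap(I^n+N)}{I^n}\Big)\geq 0,
\end{equation*}
and it suffices to show that this error term is $o(n^d)$. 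Since $\dim S=d$ (nilpotents do not change dimension), both sides are then normalized by the same $n^d/d!$, and the limsups agree.

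The key step is bounding the error. The module $\big((I^n)^{\sat}\cap (I^n+N)\big)/I^n$ is contained in $H^0_\mathfrak{m}\big((I^n+N)/I^n\big)$, and $(I^n+N)/I^n\cong N/(N\cap I^n)$. So I would bound $\ell\big(H^0_\mathfrak{m}(N/(N\cap I^n))\big)$. Here $N$ is a finitely generated $R$-module with $\dim N<d$. Using Artin--Rees, $N\cap I^n=I^{n-c}(N\cap I^c)$ for $n\ge c$, so $N/(N\cap I^n)$ is squeezed between $N/I^nN$ and $N/I^{n-c}N$.

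The plan is then to invoke the machinery from \cite{Land1} recalled in this section: for a finitely generated module $M$ of dimension $<d$ over a Nagata local ring, $\ell(H^0_\mathfrak{m}(M/I^nM))=O(n^{\dim M})$, or at least $o(n^d)$. This is exactly the ingredient used in \cite{Land1} to show that $\varepsilon(I)$ exists as a limit when $\dim N<d$, and I expect this step to be the main obstacle. One has to pass from $H^0_\mathfrak{m}(N/I^nN)$ to $H^0_\mathfrak{m}(N/(N\cap I^n))$. I would handle this with the exact sequence
\begin{equation*}
0\to (N\cap I^n)/I^nN\to N/I^nN\to N/(N\cap I^n)\to 0.
\end{equation*}
The long exact sequence of $H^0_\mathfrak{m}$ bounds $\ell(H^0_\mathfrak{m}(N/(N\cap I^n)))$ by $\ell(H^0_\mathfrak{m}(N/I^nN))$ plus $\ell(H^1_\mathfrak{m}((N\cap I^n)/I^nN))$. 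Alternatively, I would apply the polynomial-type bound of \cite{Land1} directly to the filtration $\{N\cap I^n\}$, which is $I$-good by Artin--Rees. In either route, the Nagata hypothesis is used precisely to get the uniform bound $O(n^{d-1})$ from \cite{Land1}.

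Finally, combining the equality of lengths up to an $O(n^{d-1})$ error, dividing by $n^d/d!$ and taking $\limsup$ gives $\varepsilon(I)=\varepsilon(IR_{\red})$.
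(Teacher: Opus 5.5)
\textbf{There is a gap at the key step.} Your reduction is correct, and it is the same decomposition the paper uses (imported from \cite{Land1} in the proof of Remark~\ref{rmk-dl}). Applying (\ref{eq-main2}) to $I^n$ and using the modular law,
\[
\ell\bigl(H^0_\mathfrak{m}(R/I^n)\bigr)-\ell\bigl(H^0_\mathfrak{m}(S/I^nS)\bigr)=\ell\Bigl(\frac{(I^n)^{\sat}\cap(I^n+N)}{I^n}\Bigr)=\ell\Bigl(\frac{N\cap(I^n)^{\sat}}{N\cap I^n}\Bigr),
\]
and if this is $o(n^d)$ the two $\limsup$'s agree. But the estimate itself, which you rightly call the crux, is never established.

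The exact-sequence route gives nothing, because $H^1_\mathfrak{m}\bigl((N\cap I^n)/I^nN\bigr)$ need not have finite length. For instance, take $R=k[[x,y,z,w]]/(w^2,wz)$, so $N=(w)\cong k[[x,y]]$, and $I=(x,w)$. Then $N\cap I^n=(x^{n-1}w)$ and $I^nN=(x^nw)$, so $(N\cap I^n)/I^nN\cong k[[y]]$, whose $H^1_\mathfrak{m}$ is the injective hull of $k$ over $k[[y]]$. What you would actually have to bound is the image of the connecting map $H^0_\mathfrak{m}(N/(N\cap I^n))\to H^1_\mathfrak{m}((N\cap I^n)/I^nN)$, which is the original problem again.

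The other pieces do not close the gap either:
\begin{itemize}
\item The sandwich $I^nN\subseteq N\cap I^n\subseteq I^{n-c}N$ does not help, since $H^0_\mathfrak{m}$ is not monotone under passing to quotients.
\item An $O(n^{\dim N})$ bound for $\ell(H^0_\mathfrak{m}(N/I^nN))$ concerns the wrong filtration.
\item The fallback of applying a polynomial-type bound to the $I$-good filtration names no actual result.
\item The Nagata hypothesis plays no role in such a bound; in the paper it is used only to pass to $\widehat R$.
\end{itemize}

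\textbf{The missing ingredient} is a \emph{linear} saturation bound: there is $b>0$ with $(I^n)^{\sat}\cap\mathfrak{m}^{bn}\subseteq I^n$ for all $n$. This follows from Swanson's theorem on linear growth of primary decompositions of powers, which holds in any Noetherian local ring. Take a decomposition of $I^n$ whose $\mathfrak{m}$-primary component contains $\mathfrak{m}^{bn}$; the remaining components intersect to $(I^n)^{\sat}$.

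With this bound, the kernel of $N\cap(I^n)^{\sat}\to N/\mathfrak{m}^{bn}N$ lies in $N\cap I^n$. So the error term is at most $\ell(N/\mathfrak{m}^{bn}N)=O(n^{\dim N})=o(n^d)$. This is exactly the inequality $\ell\bigl(((I^n+N)\cap(I^n)^{\sat})/I^n\bigr)\le\ell(N/N\mathfrak{m}^{nb})$ quoted in the proof of Remark~\ref{rmk-dl}.

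\textbf{Comparison with the paper, once repaired.} With this inserted, your argument is complete and is a genuinely leaner route than the paper's. The paper proceeds as follows:
\begin{itemize}
\item It uses the Nagata hypothesis to get $N(\widehat R)=N(R)\widehat R$ and $\dim N(\widehat R)=\dim N(R)$.
\item It transfers (\ref{eq-main2}) to $\widehat R$ by faithful flatness (Lemma~\ref{lem2.0}).
\item It works in the complete ring, where both epsilon multiplicities exist as limits, so that $DL(I\widehat R)$ is their difference.
\item It then descends back to $R$ (Proposition~\ref{propeps}).
\end{itemize}
Comparing $\limsup$'s directly in $R$ needs neither completion nor existence of limits. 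It uses (\ref{eq-main2}) only for the powers of $I$, and it does not use the Nagata hypothesis at all.
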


We will prove Theorem \ref{thm-spread2} by analyzing the limit of the following lemma.

\begin{Lemma}\label{rdlem}
(\cite[Proposition 2.6]{Land1})
    Let $R$ be a $d$-dimensional complete local ring and let $I\subset R$ be an ideal. Suppose that the nilradical $N$ of $R$ has dimension less than $d$. Then the limit
    \begin{equation*}
        \lim_{n\to\infty}\frac{\ell_R([(I^n+N)^{\sat}]/[(I^n)^{\sat}+N])}{n^d}
    \end{equation*}exists.
\end{Lemma}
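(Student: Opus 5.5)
We want the existence of the limit of $\ell_R\bigl((I^n+N)^{\sat}/((I^n)^{\sat}+N)\bigr)/n^d$. The plan is to write this length as a difference of two lengths whose normalized limits are already known to exist, namely the epsilon-multiplicity type limits of $R$ and of $R_{\red}$.

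Since $(I^n)^{\sat}+N\subset (I^n+N)^{\sat}$ by Remark \ref{rmk0}(1), and both contain $I^n+N$, we have
\begin{equation*}
\ell_R\bigl((I^n+N)^{\sat}/((I^n)^{\sat}+N)\bigr)=\ell_R\bigl((I^n+N)^{\sat}/(I^n+N)\bigr)-\ell_R\bigl(((I^n)^{\sat}+N)/(I^n+N)\bigr).
\end{equation*}
The first term is $\ell\bigl(H^0_{\mathfrak m}(R_{\red}/I^nR_{\red})\bigr)$. It is finite, and the ring $R_{\red}$ is complete and reduced of dimension $d$ (because $\dim N<d$, a minimal prime of maximal dimension survives). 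By the existence theorem for epsilon multiplicity over reduced, or more generally analytically unramified, complete local rings (Cutkosky's theorem, as used in \cite{Land1}), $\ell\bigl(H^0_{\mathfrak m}(R_{\red}/I^nR_{\red})\bigr)/n^d$ converges.

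For the second term, $((I^n)^{\sat}+N)/(I^n+N)\cong (I^n)^{\sat}/\bigl(I^n+((I^n)^{\sat}\cap N)\bigr)$. This equals $\ell\bigl((I^n)^{\sat}/I^n\bigr)-\ell\bigl((I^n+((I^n)^{\sat}\cap N))/I^n\bigr)$. The quantity $\ell\bigl((I^n)^{\sat}/I^n\bigr)/n^d$ converges under the hypothesis $\dim N<d$; this is the main existence result of \cite{Land1} (complete local rings with nilradical of nonmaximal dimension).

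It remains to show the correction term $\ell\bigl(((I^n)^{\sat}\cap N)/(I^n\cap N)\bigr)$ is $o(n^d)$. This module is a subquotient of $H^0_{\mathfrak m}(N/I^nN)$-like objects. More precisely, it embeds into $H^0_{\mathfrak m}\bigl(N/(I^n\cap N)\bigr)$. Using Artin--Rees, $I^n\cap N\supset I^{n-c}N$, so its length is bounded by $\ell\bigl(H^0_{\mathfrak m}(N/I^{n-c}N)\bigr)$ plus a term controlled by $\ell\bigl((I^n\cap N)/I^{n-c}N\bigr)$. Both are modules over $R/\mathrm{ann}(N)$, which has dimension $\dim N<d$. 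Hence by the standard polynomial bound $\ell\bigl(H^0_{\mathfrak m}(M/I^nM)\bigr)=O(n^{\dim M})$ for finitely generated $M$ (from finiteness of $\bigoplus H^0_{\mathfrak m}(I^nM/I^{n+1}M)$-type Rees algebra arguments), these are $O(n^{d-1})$. Combining, the desired limit exists as a difference of convergent sequences.

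The main obstacle is this last $O(n^{d-1})$ estimate, particularly bounding $\ell\bigl((I^n\cap N)/I^{n-c}N\bigr)$, which need not be finite length a priori. I would first try to avoid it by working instead with $\ell\bigl(H^0_{\mathfrak m}(N/(I^n\cap N))\bigr)$ directly, via the exact sequence $0\to N/(I^n\cap N)\to R/I^n\to R/(I^n+N)\to0$ and local cohomology. That sequence yields the difference up to an error bounded by $\ell\bigl(H^0_{\mathfrak m}(N/(I^n\cap N))\bigr)$. That error is in turn bounded using a filtration $N\supset N^2\supset\cdots$ of modules over $R_{\red}$-quotients of dimension $<d$.
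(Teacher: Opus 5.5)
Your architecture matches the paper's. The statement is quoted from \cite{Land1}, and Remark~\ref{rmk-dl} records exactly your decomposition
\[
\ell\bigl((I^n+N)^{\sat}/((I^n)^{\sat}+N)\bigr)=\ell\bigl(H^0_{\mathfrak m}(R_{\red}/I^nR_{\red})\bigr)-\ell\bigl((I^n)^{\sat}/I^n\bigr)+\ell\bigl(((I^n)^{\sat}\cap N)/(I^n\cap N)\bigr),
\]
and uses the same two existence inputs you cite. Everything therefore hinges on the last term being $o(n^d)$, and that is where your argument breaks.

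\textbf{The gap.} Artin--Rees gives the opposite inclusion to the one you wrote: $I^n\cap N=I^{n-c}(I^c\cap N)\subseteq I^{n-c}N$. So only $I^nN\subseteq I^n\cap N\subseteq I^{n-c}N$ holds. The comparison modules $(I^n\cap N)/I^nN$ and $I^{n-c}N/(I^n\cap N)$ are in general neither of finite length nor of the form $M/I^mM$ for a fixed $M$, so your bound on $\ell(H^0_{\mathfrak m}(M/I^mM))$ says nothing about them.

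Passing to $H^0_{\mathfrak m}(N/(I^n\cap N))$ does not help either. Since $N/(I^n\cap N)\cong(I^n+N)/I^n\subseteq R/I^n$, this module equals $((I^n+N)\cap(I^n)^{\sat})/I^n\cong((I^n)^{\sat}\cap N)/(I^n\cap N)$. Your embedding is thus an isomorphism, and you are back where you started. The fallback through $N\supseteq N^2\supseteq\cdots$ is only a sketch and meets the same difficulty on each layer.

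\textbf{How to repair it.} The step can be fixed in two ways.

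The first is closest to your intent. Use Artin--Rees only to say that $N_n:=I^n\cap N$ is an $I$-good filtration. Then $G=\bigoplus_n N_n/N_{n+1}$ is a finitely generated graded module over $\gr_{I\bar R}(\bar R)$, where $\bar R=R/\operatorname{ann}_R(N)$. Its $\mathfrak m$-torsion $\Gamma=\bigoplus_n H^0_{\mathfrak m}(N_n/N_{n+1})$ is a finitely generated graded submodule, hence killed by some $\mathfrak m^s$. So $\ell(\Gamma_n)$ is eventually a polynomial of degree at most $\dim\bar R-1=\dim N-1$. Left exactness of $H^0_{\mathfrak m}$ on $0\to N_k/N_{k+1}\to N/N_{k+1}\to N/N_k\to 0$ then gives $\ell(H^0_{\mathfrak m}(N/N_n))\le\sum_{k<n}\ell(\Gamma_k)=O(n^{\dim N})=o(n^d)$.

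The second is the one behind the bound $\ell(N/N\mathfrak m^{nb})$ quoted in Remark~\ref{rmk-dl}. By Swanson's theorem on linearly bounded primary decompositions of powers, there is $b$ with $I^n=(I^n)^{\sat}\cap(I^n+\mathfrak m^{bn})$ for all $n$. Hence $((I^n)^{\sat}\cap N)/(I^n\cap N)$ injects into $N/(N\cap(I^n+\mathfrak m^{bn}))$, which is a quotient of $N/\mathfrak m^{bn}N$. The length of the latter grows like a polynomial of degree $\dim N<d$.

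\textbf{A caution on circularity.} Since the statement is itself a proposition of \cite{Land1}, make sure the existence of $\varepsilon(I)$ you import from there is not deduced from it. The same $b$ gives that existence directly: $\ell((I^n)^{\sat}/I^n)=\ell(R/(I^n+\mathfrak m^{bn}))-\ell(R/((I^n)^{\sat}+\mathfrak m^{bn}))$ is a difference of colengths of graded families of $\mathfrak m$-primary ideals. Cutkosky's limit theorem applies to these because $\dim N<d$.
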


We introduce a notation for a particular limit in the following definition, this definition will be convenient during the proof of Theorem \ref{thmC}.

\begin{Definition}\label{rddef}
Let $R$ be a $d$-dimensional local ring with nilradical $N$ and $I\subset R$ an ideal. Call
    \begin{equation*}
        DL(I):=\limsup_{n\to\infty}\frac{\ell_R([(I^n+N)^{\sat}]/[(I^n)^{\sat}+N])}{n^d}
    \end{equation*}the reduction defect limit of $I$ in $R$.
\end{Definition}

The motivation behind Definition \ref{rddef} comes from the following Remark. 

\begin{Remark}\label{rmk-dl}
    Let $R$ be a complete local ring such that the dimension of the $N(R)$ is less than $\dim(R)$. Then for any ideal $I$ in $R$
    \begin{equation*}
        DL(I) = \varepsilon(I R_{red})-\varepsilon(I).
    \end{equation*}
\end{Remark}
\begin{proof}
Let $N = N(R)$ and let $d=\dim(R)$. By \cite[page 5, proof of Lemma 2.4]{Land1}, for $n\geq 1$
\begin{equation*}
    \ell \left(\frac{(I^n)^{\sat}+N}{I^n+N}\right)
     = \ell\left((I^n)^{\sat})/I^n\right)
     -\ell\left(\frac{(I^n+N)\cap [(I^n)^{\sat}]}{I^n}\right)
\end{equation*} while  \cite[page 5, proof of Lemma 2.4]{Land1} also gives that for some $b>0$
\begin{equation*}
    \ell\left(\frac{(I^n+N)\cap [(I^n)^{\sat}]}{I^n}\right)/n^d
    \leq \ell(N/(N\mathfrak{m}^{nb})/n^d\to 0
\end{equation*}as $n\to \infty$ since $\dim(N)<d$. Then by equations (3) and (4) on \cite[page 5]{Land1} we have
    \begin{equation*}
        DL(I) = \varepsilon(I (R/N))- \varepsilon(I).
    \end{equation*}
\end{proof}

We will generalize Lemma $\ref{rdlem}$ to work in any Nagata ring (instead of complete). Before proving this, we need the following two lemmas.

\begin{Lemma}\label{lem-nagata}\emph{(}\cite[Tag 0331]{stacks-project}\emph{)}
    Let $R$ be a local Nagata domain. Then $R$ is analytically unramified.
\end{Lemma}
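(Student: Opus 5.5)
Since this is a standard result (it is \cite[Tag 0331]{stacks-project}, essentially Nagata's theorem), I would follow the classical route in two stages: first reduce to the normal case, then treat the normal case by induction on dimension.

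\textbf{Step 1: reduction to the normal case.} Let $(R,\mathfrak{m})$ be the local Nagata domain and $K$ its fraction field. Taking $L=K$ in Definition \ref{df-nagata}, the integral closure $R'$ of $R$ in $K$ is a finite $R$-module. It is a semilocal normal Noetherian domain. The inclusion $R\subset R'$ is finite and completion is exact on finite modules, so
\begin{equation*}
\widehat{R}\hookrightarrow R'\otimes_R\widehat{R}=\widehat{R'}\cong\prod_i \widehat{R'_{\mathfrak{n}_i}},
\end{equation*}
where the $\mathfrak{n}_i$ are the maximal ideals of $R'$. Each $R'_{\mathfrak{n}_i}$ is again a local Nagata normal domain, since the Nagata property passes to finite extensions and localizations. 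So it suffices to prove that $\widehat{A}$ is reduced for every normal local Nagata domain $A$.

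\textbf{Step 2: the normal case, by induction on $\dim A$.} If $\dim A\le 1$, then $A$ is a field or a DVR, and $\widehat{A}$ is a field or a complete DVR, hence reduced. For $\dim A\ge 2$, pick $0\ne x\in\mathfrak{m}_A$. By flatness of $A\to\widehat{A}$, $x$ is a nonzerodivisor on $\widehat{A}$, and
\begin{equation*}
\ass_{\widehat{A}}(\widehat{A}/x\widehat{A})=\bigcup_{\mathfrak{p}\in\ass(A/xA)}\ass\big(\widehat{A}/\mathfrak{p}\widehat{A}\big).
\end{equation*}
Because $A$ is normal, every $\mathfrak{p}\in\ass(A/xA)$ has height one and $A_{\mathfrak{p}}$ is a DVR.

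Now suppose $\eta\in\widehat{A}$ is nilpotent. Then $\eta$ dies in $\widehat{A}_x$ provided $\widehat{A}$ is reduced at every associated prime $Q$ of $x\widehat{A}$. Since $x$ is a nonzerodivisor, it then suffices to check that $\widehat{A}_Q$ is reduced for each such $Q$. Such a $Q$ lies over some height-one $\mathfrak{p}$, and $A_{\mathfrak{p}}\to\widehat{A}_Q$ is flat with closed fibre a localization of the formal fibre $\widehat{A}\otimes k(\mathfrak{p})$. Since $A_{\mathfrak{p}}$ is regular, it is enough to show that this fibre is reduced: a flat local extension of a DVR with reduced closed fibre is reduced.

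\textbf{Main obstacle.} The key point is that the formal fibre of $A$ at the height-one prime $\mathfrak{p}$ is reduced. I would use $\widehat{A}/\mathfrak{p}\widehat{A}=\widehat{A/\mathfrak{p}}$. Here $A/\mathfrak{p}$ is a local Nagata domain of smaller dimension, so by induction (through Step 1) $\widehat{A/\mathfrak{p}}$ is reduced. That gives reducedness over $\mathfrak{p}$ itself, but not the full fibre $\widehat{A}\otimes k(\mathfrak{p})$.

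To pass from $\widehat{A}/\mathfrak{p}\widehat{A}$ to the full fibre, I would choose $x$ so that $\mathfrak{p}A_{\mathfrak{p}}=xA_{\mathfrak{p}}$, and compare $x\widehat{A}$ with $\mathfrak{p}\widehat{A}$ at primes over $\mathfrak{p}$. In characteristic $p$ I expect to additionally need the Japanese condition for purely inseparable extensions $L=K^{1/p}$, so as to get geometric rather than mere reducedness of the fibres. This is exactly where the finiteness of integral closures in all finite extensions enters.

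If this direct fibre argument becomes too delicate, my fallback is to cite the Stacks Project proof for this step, since the lemma is quoted from there and is used in the paper only as a black box for the Nagata generalization of Lemma \ref{rdlem}.
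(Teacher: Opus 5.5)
Your outline is the standard argument behind the cited Stacks Project lemma; the paper itself only cites it. That argument passes to the finite normalization, then inducts on dimension using a nonzero $x\in\mathfrak{m}_A$ and the height-one associated primes of $A/xA$. Step 1 and the reductions in Step 2 are fine. However, the proposal stops at the decisive step, and the reason you give for stopping is wrong. You claim that reducedness of $\widehat{A/\mathfrak{p}}$ does not reach the full fibre $\widehat{A}\otimes_A k(\mathfrak{p})$. In fact
\[
\widehat{A}\otimes_A k(\mathfrak{p})=(\widehat{A}/\mathfrak{p}\widehat{A})\otimes_A A_{\mathfrak{p}}=\widehat{A/\mathfrak{p}}\otimes_{A/\mathfrak{p}}\operatorname{Frac}(A/\mathfrak{p})
\]
is a localization of $\widehat{A/\mathfrak{p}}$, so it is reduced by the induction hypothesis. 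The same holds for the closed fibre $\widehat{A}_Q/\mathfrak{p}\widehat{A}_Q=(\widehat{A/\mathfrak{p}})_Q$ of $A_{\mathfrak{p}}\to\widehat{A}_Q$, and your own criterion (a flat local extension of a DVR with reduced closed fibre is reduced) then gives $\widehat{A}_Q$ reduced.

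There is an even more direct route. Since $Q\in\ass(\widehat{A}/\mathfrak{p}\widehat{A})$ and $\widehat{A/\mathfrak{p}}$ is reduced, $Q$ is minimal over $\mathfrak{p}\widehat{A}$, so $(\widehat{A/\mathfrak{p}})_Q$ is a field. Hence $Q\widehat{A}_Q=\pi\widehat{A}_Q$ for a generator $\pi$ of $\mathfrak{p}A_{\mathfrak{p}}$, and $\pi$ is a nonzerodivisor on $\widehat{A}_Q$ by flatness, so $\widehat{A}_Q$ is a DVR. No choice of $x$ adapted to $\mathfrak{p}$, and no geometric reducedness via $K^{1/p}$, is needed.

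Two further repairs are needed. First, replace ``$\eta$ dies in $\widehat{A}_x$'' by the actual argument. The nilradical $\mathfrak{N}$ of $\widehat{A}$ vanishes in $\widehat{A}_Q$ for every $Q\in\ass_{\widehat{A}}(\widehat{A}/x\widehat{A})$. Since $\widehat{A}/x\widehat{A}$ embeds into the product of its localizations at these primes, $\mathfrak{N}\subset x\widehat{A}$. Because $x$ is a nonzerodivisor, a quotient of a nilpotent by $x$ is again nilpotent, so $\mathfrak{N}=x\mathfrak{N}$, and Nakayama gives $\mathfrak{N}=0$.

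Second, your diagnosis of where the Japanese hypothesis enters is off. It plays no role in the fibre step. It is exactly what makes the induction legitimate, through your Step 1 claim that finite extensions and localizations of Nagata rings are Nagata. For instance, at the first stage the ring $A/\mathfrak{p}$ you feed back into the induction is a localization of a quotient $R'/\mathfrak{P}$. Its fraction field can be a nontrivial, possibly inseparable, finite extension of that of $R/(\mathfrak{P}\cap R)$. Re-running Step 1 on it requires finiteness of integral closures in such extensions. With these corrections your proposal becomes a complete proof.
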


\begin{Lemma}\label{prop-nagata}
Let $R$ be a local ring.
\begin{enumerate}
    \item[(1)]$N(\widehat{R}) = N(R)\widehat{R}$ if and only if $R_{\red}$ is analytically unramified.
    \item[(2)] If $R$ is Nagata, then $R_{\red}$ is analytically\\ unramified.
\end{enumerate} 
\end{Lemma}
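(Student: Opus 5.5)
For (1), the plan is to compare nilradicals inside $\widehat{R}$. Write $N=N(R)$. Since completion is flat and $\widehat{R}/N\widehat{R}\cong \widehat{R_{\red}}$, where $\widehat{R_{\red}}$ is the completion of $R_{\red}$ at its maximal ideal $\mathfrak{m}/N$, we get the identification
\begin{equation*}
    N(\widehat{R})/N\widehat{R} \;=\; N\big(\widehat{R}/N\widehat{R}\big) \;=\; N\big(\widehat{R_{\red}}\big).
\end{equation*}
The first equality holds because $N\widehat{R}$ consists of nilpotent elements: it is generated by the nilpotent images of generators of $N$, so $N\widehat{R}\subset N(\widehat{R})$. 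The nilradical of a quotient by an ideal contained in the nilradical is the image of the nilradical.

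Hence $N(\widehat{R})=N\widehat{R}$ holds if and only if $N(\widehat{R_{\red}})=0$. By definition, the latter says exactly that $R_{\red}$ is analytically unramified. This proves both directions of (1) at once; the only point to state carefully is the identification $\widehat{R}/N\widehat{R}\cong\widehat{R/N}$, which holds for finitely generated modules over a Noetherian local ring.

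For (2), by (1) it suffices to show that $\widehat{R_{\red}}$ is reduced when $R$ is Nagata. Quotients of Nagata rings are Nagata, so $A:=R_{\red}$ is a reduced Nagata local ring. Let $P_1,\dots,P_r$ be its minimal primes. Each $A/P_i$ is a local Nagata domain, so it is analytically unramified by Lemma \ref{lem-nagata}; that is, $\widehat{A}/P_i\widehat{A}\cong\widehat{A/P_i}$ is reduced. Since $A$ is reduced, the natural map $A\to\prod_i A/P_i$ is injective. Completion is exact, so we obtain an injection $\widehat{A}\hookrightarrow \prod_i \widehat{A/P_i}$, and the target is a finite product of reduced rings. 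Therefore $\widehat{A}$ is reduced. (Alternatively, one could cite the Stacks Project statement for reduced Nagata local rings directly, but the reduction to the domain case via Lemma \ref{lem-nagata} keeps the argument self-contained.)

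The main obstacle is the step in (2) that passes from domains to reduced rings. The key points are that $\widehat{(\cdot)}$ preserves the injectivity of $A\to\prod A/P_i$, which holds because this is a map of finitely generated $A$-modules and completion is flat, and that the Nagata property passes to the quotients $A/P_i$, which is immediate from Definition \ref{df-nagata}.
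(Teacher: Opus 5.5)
Your proposal is correct and follows the paper's proof. Part (1) is the same identification $N(\widehat{R})/N(R)\widehat{R}=N(\widehat{R_{\red}})$. Part (2) is the same reduction to the domains $R_{\red}/P$ followed by Lemma \ref{lem-nagata}, except that you prove the reduction step directly through the injection $\widehat{A}\hookrightarrow\prod_i\widehat{A/P_i}$, whereas the paper cites \cite[Proposition 9.1.3]{SH}.
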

\begin{proof}
We first prove (1). The completion of $R_{\red}=R/N(R)$ is
    \begin{equation*}
        \widehat{R/N(R)} = \widehat{R}/N(R)\widehat{R}
    \end{equation*}and the nilradical of this ring is $N(\widehat{R})/N(R)\widehat{R}$, which is zero if and only if $N(\widehat{R}) = N\widehat{R}$. This completes the proof of (1).\\

    Now we prove (2). Let $S = R_{\red}$.
    By \cite[Proposition 9.1.3]{SH}, to show that $S$ is analytically unramified it is enough to fix a minimal prime $Q$ of $S$ and check that $S/Q$ is analytically unramified. But this follows from Definition \ref{df-nagata} and Lemma \ref{lem-nagata}. This completes the proof.
\end{proof}

We will now prove the following strengthening of \cite[Proposition 2.6]{Land1}.

\begin{Corollary}
    Let $R$ be a $d$-dimensional Nagata local ring with nilradical $N$ and let $I\subset R$ be an ideal. Suppose that the dimension of $N$ is less than $d$. Then the limit
    \begin{equation*}
        DL(I) = \lim_{n\to\infty}\frac{\ell_R([(I^n+N)^{\sat}]/[(I^n)^{\sat}+N])}{n^d}
    \end{equation*}exists and equals $DL(I\widehat{R})$.
\end{Corollary}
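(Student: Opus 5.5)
Our plan is to reduce to the complete case (Lemma \ref{rdlem}) by showing that each term of the sequence is unchanged under passage to $\widehat{R}$. Set $\widehat{N} = N(\widehat{R})$. Since $R$ is Nagata, Lemma \ref{prop-nagata} (2) shows that $R_{\red}$ is analytically unramified. Then Lemma \ref{prop-nagata} (1) gives $\widehat{N} = N\widehat{R}$. We also record two facts. First, $\dim \widehat{R} = d$. Second, $\dim_{\widehat{R}} \widehat{N} = \dim_{\widehat{R}} N\otimes_R \widehat{R} = \dim_R N < d$, since completion preserves dimension of finitely generated modules. Hence $\widehat{R}$ satisfies the hypotheses of Lemma \ref{rdlem} for the ideal $I\widehat{R}$, and the limit defining $DL(I\widehat{R})$ exists.

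The core step is the equality, for every $n\geq 1$,
\begin{equation*}
\ell_R\big((I^n+N)^{\sat}/[(I^n)^{\sat}+N]\big) = \ell_{\widehat{R}}\big((I^n\widehat{R}+\widehat{N})^{\sat}/[(I^n\widehat{R})^{\sat}+\widehat{N}]\big).
\end{equation*}
We plan to use that $R\to\widehat{R}$ is flat with $\mathfrak{m}\widehat{R} = \widehat{\mathfrak{m}}$. Flatness gives $(\mathfrak{a}:\mathfrak{m}^t)\widehat{R} = \mathfrak{a}\widehat{R}:\widehat{\mathfrak{m}}^t$ for every ideal $\mathfrak{a}\subset R$, so $\mathfrak{a}^{\sat}\widehat{R} = (\mathfrak{a}\widehat{R})^{\sat}$. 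This is exactly Theorem \ref{thmB}, applied with (iii) holding. Applying this to $\mathfrak{a} = I^n$ and to $\mathfrak{a} = I^n+N$, and using $N\widehat{R} = \widehat{N}$, we get
\begin{equation*}
(I^n+N)^{\sat}\widehat{R} = (I^n\widehat{R}+\widehat{N})^{\sat}
\quad\text{and}\quad
[(I^n)^{\sat}+N]\widehat{R} = (I^n\widehat{R})^{\sat}+\widehat{N}.
\end{equation*}
The quotient $(I^n+N)^{\sat}/[(I^n)^{\sat}+N]$ has finite length over $R$. If it did not, the limit would be meaningless; in fact finiteness follows because both ideals agree after localizing at any non-maximal prime, since saturation does not change a localization at $P\neq\mathfrak{m}$. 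For a finite-length module $M$, flatness gives $M\otimes_R\widehat{R}$ of the same length, because $\ell_{\widehat{R}}(R/\mathfrak{m}\otimes\widehat{R}) = 1$. By flatness, $(\mathfrak{b}/\mathfrak{c})\otimes\widehat{R}\cong \mathfrak{b}\widehat{R}/\mathfrak{c}\widehat{R}$, and the length equality follows.

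Dividing by $n^d$, the sequences defining $DL(I)$ and $DL(I\widehat{R})$ coincide term by term. Since the latter converges by Lemma \ref{rdlem}, the limit $DL(I)$ exists and equals $DL(I\widehat{R})$.

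The main obstacle is the identification $N(\widehat{R}) = N(R)\widehat{R}$. Without it the second-row ideals would differ, and this is precisely where the Nagata hypothesis enters, via Lemma \ref{prop-nagata}. The rest consists of standard flatness bookkeeping.
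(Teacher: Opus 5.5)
Your proposal is correct and takes essentially the same route as the paper. Both arguments get $N(\widehat{R}) = N\widehat{R}$ from Lemma \ref{prop-nagata}, commute saturation with the flat extension $R\to\widehat{R}$ (using $\mathfrak{m}\widehat{R}=\widehat{\mathfrak{m}}$), match the lengths term by term, and then invoke Lemma \ref{rdlem}. The differences are minor: you obtain finite length by a support argument and preserve length by flatness, where the paper uses $\mathfrak{m}^{t}M_n=0$ and the isomorphism $M_n\cong\widehat{M_n}$. You also check $\dim N(\widehat{R})<d$ explicitly, which the paper's proof leaves implicit (it is Lemma \ref{lem-nilradical-nonmax}).
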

\begin{proof}
    Let $\mathfrak{m}$ be the maximal ideal of $R$. By Lemma \ref{prop-nagata}, $N\widehat{R} = N(\widehat{R})$. Fix $n\geq 1$ and let $J = I^n$. We can find $t$ large enough that $(J+N)^{\sat} = (J+N):\mathfrak{m}^t$ and $J^{\sat} = J:\mathfrak{m}^t$. Then we have $\mathfrak{m}^t(J+N)^{\sat}\subset J+N\subset J^{\sat}+N$ so that
\begin{equation*}
    \mathfrak{m}^t\frac{(J+N)^{\sat}}{J^{\sat}+N}=0.
\end{equation*}Now for $n\geq 1$, let $M_n = \frac{(I^n+N)^{\sat}}{(I^n)^{\sat}+N}$ and let $t_n\in\mathbb{Z}_{>0}$ such that $\mathfrak{m}^{t_n}M_n=0$. Since $M_n$ is an $R/\mathfrak{m}^{t_n}$-module, it is an Artin module. Then by  \cite[Lemma 2.11]{Land1}, $M_n$ is isomorphic as $R$-modules to the $R/\mathfrak{m}^{t_n}\cong \widehat{R/\mathfrak{m}^{t_n}}$-module $\widehat{M_n}$. Thus we have
\begin{equation*}
    \begin{split}
        \ell_{R}(M_n)
        = \ell_{R/\mathfrak{m}^{t_n}}(M_n)
        = \ell_{\widehat{R/\mathfrak{m}^{t_n}}}(M_n)
        = \ell_{\widehat{R}/\widehat{m}^{t_n}}(\widehat{M}_n)
        = \ell_{\widehat{R}}(\widehat{M}_n)
    \end{split}
\end{equation*}for all $n\geq 1$. Moreover, we have
\begin{equation*}
    \begin{split}
        \widehat{M_n}
        &=\frac{(I^n+N)^{\sat}\widehat{R}}{[(I^n)^{\sat}+N]\widehat{R}}
        =
        \frac{[(I^n+N)\widehat{R}]^{\sat}}{(I^n)^{\sat}\widehat{R}+N\widehat{R}}
        = \frac{[I^n\widehat{R}+N\widehat{R}]^{\sat}}{(I^n\widehat{R})^{\sat}+N\widehat{R}}
        \\&=
        \frac{[(I\widehat{R})^n+N(\widehat{R})]^{\sat}}{((I\widehat{R})^n)^{\sat}+N(\widehat{R})}.
    \end{split}
\end{equation*}Then by Lemma \ref{rdlem}, the limits
\begin{equation*}
    \begin{split}
        DL(I) = \lim_{n\to\infty}&\frac{\ell_R([(I^n+N)^{\sat}]/[(I^n)^{\sat}+N])}{n^d}
    \\&=\lim_{n\to\infty}\frac{\ell_{\widehat{R}}(([(I\widehat{R})^n+N(\widehat{R})]^{\sat})/[((I\widehat{R})^n)^{\sat}+N(\widehat{R})])}{n^d}
    = DL(I\widehat{R})
    \end{split}
\end{equation*}exist and are equal.
\end{proof}

\begin{Remark}\label{rmk1.1}
    Let $R$ be a $d$-dimensional complete local ring such that the dimension of the nilradical $N$ of $R$ is less than $d$. Then if $DL(I) = 0$ (e.g. if $R$ is weakly reduced at every power of $I$), we have $\varepsilon(I) = \varepsilon(I(R/N))$.
\end{Remark}
In particular, weak reducedness is enough that we can always compute epsilon multiplicities in a reduced setting.
\begin{proof}
    The result follows from Remark \ref{rmk-dl}.
\end{proof}

We observe a lemma which was shown in the proof of \cite[Corollary 5.6]{C2}.
\begin{Lemma}\label{lem-nilradical-nonmax}
    Suppose that $R$ is a local ring such that $N(\widehat{R}) = N(R)\widehat{R}$ (for instance, if $R$ is Nagata, by Lemma \ref{prop-nagata}). Then $\dim(N(\widehat{R})) = \dim(N(R))$.
\end{Lemma}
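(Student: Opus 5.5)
Set $N = N(R)$ and view $N$ as an $R$-module. Its dimension is
\begin{equation*}
\dim(N) = \dim R/\operatorname{Ann}_R(N),
\end{equation*}
and likewise $\dim(N(\widehat{R})) = \dim \widehat{R}/\operatorname{Ann}_{\widehat{R}}(N(\widehat{R}))$ as an $\widehat{R}$-module. By hypothesis $N(\widehat{R}) = N\widehat{R}$, so the plan is to compare the $R$-module $N$ with its completion $\widehat{N} = N\otimes_R \widehat{R} \cong N\widehat{R}$. The last isomorphism holds because $\widehat{R}$ is flat over $R$, so $N\otimes_R\widehat{R}\to \widehat{R}$ is injective with image $N\widehat{R}$.

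The key steps are as follows.

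First, by flatness of $R\to\widehat{R}$ and finite generation of $N$, annihilators commute with completion:
\begin{equation*}
\operatorname{Ann}_{\widehat{R}}(N\otimes_R\widehat{R}) = \operatorname{Ann}_R(N)\widehat{R}.
\end{equation*}
This follows from the exact sequence $0\to \operatorname{Ann}_R(N)\to R\to \operatorname{Hom}_R(N,N)$, tensored with $\widehat{R}$, using $\operatorname{Hom}_R(N,N)\otimes\widehat{R}\cong \operatorname{Hom}_{\widehat{R}}(\widehat{N},\widehat{N})$ for finitely presented $N$. In other words, it is the same statement as $(I:J)\widehat{R} = I\widehat{R}:J\widehat{R}$, which holds for flat maps and was already noted in Theorem \ref{thmB}.

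Second, for any ideal $\mathfrak{a}$ of $R$ we have $\widehat{R}/\mathfrak{a}\widehat{R}\cong \widehat{R/\mathfrak{a}}$, and completion preserves dimension of a local ring. Hence
\begin{equation*}
\dim \widehat{R}/\operatorname{Ann}_R(N)\widehat{R} = \dim R/\operatorname{Ann}_R(N).
\end{equation*}
Alternatively, one can argue directly that the Hilbert–Samuel functions of $N$ and $\widehat{N}$ with respect to $\mathfrak{m}$ and $\widehat{\mathfrak{m}}$ agree, since $N/\mathfrak{m}^kN\cong \widehat{N}/\widehat{\mathfrak{m}}^k\widehat{N}$; therefore their degrees, which are the dimensions, coincide.

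Third, combining the two steps gives
\begin{equation*}
\dim N(\widehat{R}) = \dim N\widehat{R} = \dim N.
\end{equation*}

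The Hilbert–Samuel route is the cleanest and avoids the annihilator computation entirely, so I would use it. The only point requiring care is identifying $N\widehat{R}$ with $N\otimes_R\widehat{R}$ as $\widehat{R}$-modules. This is the step where the hypothesis $N(\widehat{R}) = N(R)\widehat{R}$ enters: without it, $N(\widehat{R})$ could be strictly larger than $N\widehat{R}$, and its dimension could be bigger.
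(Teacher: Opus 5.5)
Your proof is correct and is essentially the paper's argument. The paper shows $\dim N(\widehat{R})=\dim \gr_{\mathfrak{m}\widehat{R}}(N(R)\widehat{R})=\dim \gr_{\mathfrak{m}}(N(R))=\dim N(R)$, which is your Hilbert--Samuel comparison via $N/\mathfrak{m}^kN\cong \widehat{N}/\widehat{\mathfrak{m}}^k\widehat{N}$, and your use of flatness to identify $N(R)\widehat{R}$ with $N(R)\otimes_R\widehat{R}\cong\widehat{N(R)}$ makes explicit a step the paper leaves implicit.
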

\begin{proof}
    Let $N = N(\widehat{R})$ and $M = N(R)$. We have that
    \begin{equation*}
        \dim_{\widehat{R}}(N) = \dim(\text{gr}_{\mathfrak{m}_R\widehat{R}}(N))
        =\dim(\text{gr}_{\mathfrak{m}_R\widehat{R}}(M\widehat{R}))
        = \dim(\text{gr}_{\mathfrak{m}_R}(M))
        =\dim_R(M)
    \end{equation*}which completes the proof.
\end{proof}

\begin{Proposition}\label{propeps}
    Let $R$ be a Nagata local ring such that $\dim(N(R))<\dim(R)$. Then
    \begin{equation*}
        \varepsilon(I) = \varepsilon(I(R/N(R)))
    \end{equation*}for any ideal $I\subset R$ such that the $DL(I\widehat{R})=0$.
\end{Proposition}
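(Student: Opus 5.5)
The plan is to pass to the completion $\widehat{R}$, apply Remark \ref{rmk-dl} there, and then descend both epsilon multiplicities back to $R$ and $R/N(R)$.

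First we set up the hypotheses over $\widehat{R}$. Write $N = N(R)$ and $d=\dim R = \dim \widehat{R}$. Since $R$ is Nagata, Lemma \ref{prop-nagata} gives $N(\widehat{R}) = N\widehat{R}$. Lemma \ref{lem-nilradical-nonmax} then gives $\dim N(\widehat{R}) = \dim N < d$. So $\widehat{R}$ is a $d$-dimensional complete local ring whose nilradical has nonmaximal dimension, and Remark \ref{rmk-dl} applies to the ideal $I\widehat{R}$:
\begin{equation*}
DL(I\widehat{R}) = \varepsilon(I\widehat{R}/N(\widehat{R})) - \varepsilon(I\widehat{R}).
\end{equation*}
By hypothesis the left side is zero. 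Hence $\varepsilon(I\widehat{R}) = \varepsilon(I\widehat{R}/N(\widehat{R}))$, which is Remark \ref{rmk1.1} applied in $\widehat{R}$.

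Second, we descend to $R$ by showing $\varepsilon(I) = \varepsilon(I\widehat{R})$ and $\varepsilon(I(R/N)) = \varepsilon(I\widehat{R}/N(\widehat{R}))$. Consider the first equality. The module $H^0_{\mathfrak m}(R/I^n) = (I^n)^{\sat}/I^n$ has finite length and is killed by a power of $\mathfrak m$. As in the proof of the preceding Corollary (via \cite[Lemma 2.11]{Land1}), its length over $R$ equals the length of its completion over $\widehat{R}$. Flatness of $R\to\widehat{R}$ and $\sqrt{\mathfrak m\widehat{R}} = \widehat{\mathfrak m}$ give $(I^n)^{\sat}\widehat{R} = (I^n\widehat{R})^{\sat}$ by Theorem \ref{thmB}. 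Therefore
\begin{equation*}
\ell_R\big((I^n)^{\sat}/I^n\big) = \ell_{\widehat{R}}\big((I^n\widehat{R})^{\sat}/I^n\widehat{R}\big)
\end{equation*}
for every $n$, and taking $\limsup$ gives $\varepsilon(I)=\varepsilon(I\widehat{R})$. For the second equality we run the same argument with the local ring $R/N$. Its completion is $\widehat{R}/N\widehat{R} = \widehat{R}/N(\widehat{R})$, again by Lemma \ref{prop-nagata}. Both rings have dimension $d$, because $\dim N<d$ forces $N$ to avoid some minimal prime of dimension $d$, and so the normalizing factor $n^d/d!$ is the same on both sides.

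Combining the two steps gives $\varepsilon(I) = \varepsilon(I\widehat{R}) = \varepsilon(I\widehat{R}/N(\widehat{R})) = \varepsilon(I(R/N))$. The main point needing care is the identification of the completion of $R/N$ with $\widehat{R}/N(\widehat{R})$. This is exactly where the Nagata hypothesis enters, and without it the reduced ring upstairs would differ from the completion of the reduced ring. The equality $\dim R/N = d$ also needs a brief justification, so that the exponent $d$ in $\varepsilon(I(R/N))$ is the right one. Everything else is the flat-base-change computation of the saturated quotients, which we already carried out in the proof of the preceding Corollary.
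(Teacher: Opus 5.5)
Your proof is correct and follows essentially the same route as the paper: pass to $\widehat{R}$ using Lemmas \ref{prop-nagata} and \ref{lem-nilradical-nonmax}, apply Remark \ref{rmk-dl} (equivalently Remark \ref{rmk1.1}) there, and descend via $\varepsilon(I)=\varepsilon(I\widehat{R})$ and $\widehat{R/N(R)}=\widehat{R}/N(\widehat{R})$. You justify the first descent equality, which the paper takes for granted, while the paper spells out the identification of the extended ideals. One small correction: $\dim R/N=\dim R$ holds in any local ring simply because $N$ lies in every prime ideal, so you do not need $\dim N<d$ for this, and as you phrased it, $N$ does not ``avoid'' any prime.
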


In Lemma \ref{lem2.0} we prove that the $DL(I\widehat{R})=0$ for any ideal $I$ if $R$ is weakly reduced. Now we prove Proposition \ref{propeps}.
\begin{proof}
    Let $N$ be the nilradical of $\widehat{R}$. Since $R$ is Nagata, we have that $N = N(R)\widehat{R}$ by Lemma \ref{prop-nagata}. Since $R$ is Nagata, we also have that $\dim(N(\widehat{R}))<\dim(R)$ by Lemma \ref{lem-nilradical-nonmax}. We now have that
    \begin{equation*}
        \varepsilon(I) = \varepsilon(I\widehat{R})
        = \varepsilon((I\widehat{R})(\widehat{R}/N))
    \end{equation*}
    by applying Remark \ref{rmk1.1} to the ring $\widehat{R}$ and the ideal $I\widehat{R}$. Let $A = \widehat{R}$. We see that
    \begin{equation*}
        \begin{split}
            (IA)(A/N)
            &= \frac{IA+N}{N}
            = \frac{IA+N(R)A}{N(R)A}
            =\{xy+N(R)A\mid x\in I,y\in A\}\\
            &= \{(x+N(R))(y+N(R)A)\mid x\in I,y\in A\}
            \\&=
            \frac{I+N(R)}{N(R)}\frac{A}{N}
            = (I(R/N(R)))(A/N)
            = (I(R/N(R)))(\widehat{R}/N(R)\widehat{R})\\
            &= (I(R/N(R)))\widehat{R/N(R)}.
        \end{split}
    \end{equation*}Thus,
    \begin{equation*}
        \varepsilon(I) = \varepsilon((I(R/N(R)))\widehat{R/N(R)})
        = \varepsilon(I(R/N(R)))
    \end{equation*}which completes the proof.
\end{proof}
Proposition \ref{propeps} allows us to reduce to a reduced setting in working with epsilon multiplicity in any Nagata ring whose nilradical has nonmaximal dimension.\\

In order to prove Theorem \ref{thm-spread2} we need one more lemma.

\begin{Lemma}\label{lem2.0}
    Let $R\to S$ be a flat extension of rings such that
    \begin{enumerate}
        \item[(i)] $N(S) = N(R)S$,
        \item[(ii)]  $R$ and $S$ are local, and
        \item[(iii)] $\mathfrak{m}_RS$ is $\mathfrak{n}$-primary.
    \end{enumerate}Then for any ideal $I$ of $R$, $I$ is weakly reduced if and only if $IS$ is a weakly reduced ideal of $S$.\\

    In particular, if $R$ is a Nagata local ring, then we can take $S = \widehat{R}$ by Lemma \ref{prop-nagata}.
\end{Lemma}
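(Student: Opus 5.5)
We have to show that $(I+N(R))^{\sat}=I^{\sat}+N(R)$ holds in $R$ if and only if $(IS+N(S))^{\sat}=(IS)^{\sat}+N(S)$ holds in $S$. The plan is to compare both sides after extending to $S$, and then to descend the resulting equality using faithful flatness.

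\textbf{Step 1: saturation commutes with extension.} A flat local homomorphism of local rings is faithfully flat, and flatness gives $(I:J)S=(IS):(JS)$ for all ideals $I,J\subset R$. Together with hypothesis (iii), Theorem \ref{thmB} (the implication (iii)$\Rightarrow$(i)) therefore gives
\begin{equation*}
K^{\sat}S=(KS)^{\sat}\quad\text{for every ideal } K\subset R.
\end{equation*}

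\textbf{Step 2: extend both sides of the weakly reduced equation.} Apply Step 1 with $K=I+N(R)$ and use hypothesis (i), $N(R)S=N(S)$. This gives
\begin{equation*}
(I+N(R))^{\sat}S=(IS+N(S))^{\sat}.
\end{equation*}
Apply Step 1 with $K=I$ and use (i) again. This gives
\begin{equation*}
(I^{\sat}+N(R))S=(IS)^{\sat}+N(S).
\end{equation*}
So the equation expressing that $IS$ is weakly reduced in $S$ is exactly the extension to $S$ of the equation expressing that $I$ is weakly reduced in $R$.

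\textbf{Step 3: the two implications.} If $I$ is weakly reduced, extending the equality to $S$ and using Step 2 shows that $IS$ is weakly reduced. Conversely, suppose $IS$ is weakly reduced. Then the ideals $A=(I+N(R))^{\sat}$ and $B=I^{\sat}+N(R)$ satisfy $AS=BS$. Faithful flatness gives $KS\cap R=K$ for every ideal $K\subset R$, so
\begin{equation*}
A=AS\cap R=BS\cap R=B.
\end{equation*}
One could instead use Remark \ref{rmk0} to reduce to the single inclusion $A\subset B$, but contracting gives equality directly.

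\textbf{The case $S=\widehat{R}$.} The completion map $R\to\widehat{R}$ is flat and local, and $\mathfrak{m}\widehat{R}$ is the maximal ideal of $\widehat{R}$, so (ii) and (iii) hold. When $R$ is Nagata, Lemma \ref{prop-nagata} shows that $R_{\red}$ is analytically unramified, which is equivalent to $N(\widehat{R})=N(R)\widehat{R}$; this is (i).

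\textbf{Main obstacle.} The only point needing care is invoking Theorem \ref{thmB}. Its proof of (iii)$\Rightarrow$(i) uses Lemma \ref{lem2.1}, which is stated for arbitrary ideals in $S$ but is really applied to ideals of $R$, and we must check it is applied in that intended form. In addition, Theorem \ref{thmB} is stated for extensions $R\subset S$, so we need $R\to S$ injective. This follows from faithful flatness (flat and local); if one prefers to avoid it, the colon computation in the proof of Theorem \ref{thmB} can be repeated verbatim for the map $R\to S$.
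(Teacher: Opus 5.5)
Your proposal is correct and follows essentially the paper's proof: extend both sides of $(I+N)^{\sat}=I^{\sat}+N$ to $S$ using that saturation commutes with this flat extension, then descend to $R$ by faithful flatness. The only difference is that the paper proves the commutation inline, via $\mathfrak{n}^{lt}\subset(\mathfrak{m}S)^t\subset\mathfrak{n}^t$ together with flatness of colons, whereas you cite Theorem \ref{thmB}(iii)$\Rightarrow$(i), which rests on the same computation.
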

\begin{proof}
    Let $\mathfrak{m}$ be the maximal ideal of $R$. First we will prove that if $J\subset S$ is an ideal in $S$ then $J^{\sat}=J:(\mathfrak{m}S)^t$ for $t$ sufficiently large. Let $l$ be a positive integer such that $\mathfrak{n}^l\subset \mathfrak{m}S$ There is a positive integer $t$ such that $J^{\sat} = J:\mathfrak{n}^i$ for $i\geq t$. Now since $\mathfrak{n}^l\subset \mathfrak{m}S\subset \mathfrak{n}$ we have $\mathfrak{n}^{lt}\subset (\mathfrak{m}S)^t\subset \mathfrak{n}^t$ so that
    \begin{equation*}
        J^{\sat} = J:\mathfrak{n}^t
        \subset J:(\mathfrak{m}S)^t\subset J:\mathfrak{n}^{lt} = J^{\sat}
    \end{equation*}so that $J^{\sat} = J:(\mathfrak{m}S)^t$.\\

    Now we are ready to prove the conclusion of the lemma. Let $N = N(R)$. We have $N(S) = NS$ by assumption. We can now find a sufficiently large positive integer $t$ such that $I^{\sat} = I:\mathfrak{m}^t$, $(IS)^{\sat} = IS:(\mathfrak{m}S)^t$, $(I+N)^{\sat} = (I+N):\mathfrak{m}^t$, and $(IS+NS)^{\sat} = (IS+NS):(\mathfrak{m}S)^t$.\\

    Since $\mathfrak{m}S\subset \sqrt{\mathfrak{m}S} = \mathfrak{n}$, the map $R\to S$ is a flat local homomorphism. So $R\to S$ is faithfully flat. Hence we have
    \begin{equation}\label{eq-ff1}
        \begin{split}
            (I+N)^{\sat}S
            &= ((I+N):(\mathfrak{m}^t))S
            = (I+N)S : \mathfrak{m}^tS
            = (IS+NS):(\mathfrak{m}S)^t
            \\&= (IS+NS)^{\sat}
        \end{split}
    \end{equation}and
    \begin{equation}\label{eq-ff2}
        \begin{split}
            (I^{\sat}+N)S
            &=
            I^{\sat}S+NS
            = (I:\mathfrak{m}^t)S+NS
            =(IS:\mathfrak{m}^tS)+NS
            \\&=
            (IS)^{\sat}+NS.
        \end{split}
    \end{equation}If $R$ is weakly reduced at $I$, then extending the equation
    \begin{equation*}
        (I+N)^{\sat} = I^{\sat}+N
    \end{equation*}to $S$ implies that $S$ is weakly reduced at $IS$ by (\ref{eq-ff1}) and (\ref{eq-ff2}). If $S$ is weakly reduced at $IS$, then contracting the equation
    \begin{equation*}
        (IS+NS)^{\sat}=(IS)^{\sat}+NS
    \end{equation*}to $R$ yields that $(I+N)^{\sat} = I^{\sat}+N$ by (\ref{eq-ff1}), (\ref{eq-ff2}), and faithful flatness of $R\to S$.
\end{proof}

We are now ready to prove Theorem \ref{thm-spread2}.\\

\begin{proof}
    Let $R$ be a Nagata local ring which is weakly reduced at every ideal, and let $d = \dim(R)$. Let $I\subset R$ be an ideal. Now by Lemma \ref{lem2.0}, taking $S:= \widehat{R}$. We have that $IS = I\widehat{R}$ is weakly reduced for every ideal $I$ of $R$. Thus, $DL(I\widehat{R})$ exists as a limit and equals zero for every ideal $I$ of $R$. Theorem \ref{thm-spread2} now follows from Proposition \ref{propeps}.
\end{proof}

\section{Testing for the hypothesis of Theorem \ref{thmA}}\label{sec4}

In this section, we give some criteria and characterizations for when the conditions (I) (and equivalently, (II)), of Theorem \ref{thmA} hold. We begin with a remark and a short lemma.

Recall the condition (\ref{eq-main}) on a map $R\to S$ of local rings and an ideal $I$ in $R$ which states that
\begin{equation*}
    I^{\sat}S = (IS)^{\sat}.
\end{equation*}

\begin{Lemma}\label{lem5.1}
    Let $R$ be a local ring. If $I$ is any ideal in $R$, then
        \begin{equation*}
        (I^{\sat}+N(R))^{\sat} = (I+N(R))^{\sat}.
    \end{equation*}
\end{Lemma}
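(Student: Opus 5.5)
We prove the two inclusions separately, using only that saturation is a closure operation (Remark \ref{rmk-calc} (iv)). Write $N = N(R)$ and $\mathfrak{m}$ for the maximal ideal of $R$.

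For the inclusion $(I+N)^{\sat}\subset (I^{\sat}+N)^{\sat}$: since $I\subset I^{\sat}$, we have $I+N\subset I^{\sat}+N$. Monotonicity of saturation then gives the inclusion directly.

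For the reverse inclusion $(I^{\sat}+N)^{\sat}\subset (I+N)^{\sat}$, it suffices to show $I^{\sat}+N\subset (I+N)^{\sat}$. Once this holds, applying monotonicity and idempotence of saturation gives
\begin{equation*}
    (I^{\sat}+N)^{\sat}\subset ((I+N)^{\sat})^{\sat} = (I+N)^{\sat}.
\end{equation*}
The needed containment is exactly Remark \ref{rmk0} (1) applied with $J = N$. Concretely, $I\subset I+N$ gives $I^{\sat}\subset (I+N)^{\sat}$, and $N\subset I+N\subset (I+N)^{\sat}$. Since $(I+N)^{\sat}$ is an ideal, it contains the sum $I^{\sat}+N$.

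If one prefers an elementwise check, use Remark \ref{rmk-calc} (ii) to pick $t$ with $I^{\sat}=I:\mathfrak{m}^t$. Then $\mathfrak{m}^t(I^{\sat}+N)\subset I+N$, so $I^{\sat}+N\subset (I+N):\mathfrak{m}^t\subset (I+N)^{\sat}$.

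There is no real obstacle. The only point needing care is idempotence of saturation, $(K^{\sat})^{\sat}=K^{\sat}$. The paper derives this from Remark \ref{rmk-calc} (i); it also follows directly because $\mathfrak{m}^a(\mathfrak{m}^b x)\subset K$ whenever $\mathfrak{m}^a x\subset K^{\sat}$ with $b$ chosen so that $K^{\sat}=K:\mathfrak{m}^b$. Nothing specific to the nilradical is used, so the same argument works with $N$ replaced by any ideal $J$.
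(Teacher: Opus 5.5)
Your proof is correct and matches the paper's argument: the containment $I+N\subset I^{\sat}+N$ gives one inclusion, and Remark~\ref{rmk0}~(1) combined with idempotence of saturation gives the other. Your elementwise check and your observation that any ideal $J$ may replace $N$ are both accurate but not needed.
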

\begin{proof}
    Let $N$ be the nilradical of $R$. We prove (i). Since $I+N\subset I^{\sat}+N$, we have that\\ $(I+N)^{\sat}\subset (I^{\sat}+N)^{\sat}$. On the other hand, Remark \ref{rmk0} implies that
    \begin{equation*}
        \begin{split}
            (I^{\sat}+N)^{\sat}\subset [(I+N)^{\sat}]^{\sat}
            = (I+N)^{\sat}
        \end{split}
    \end{equation*}and thus the lemma holds.
\end{proof}

The following proposition shows that we can always assume that $I$ is saturated when testing for whether $I$ is weakly reduced.

\begin{Proposition}\label{proptest0}
    Let $R$ be a local ring. If $I$ and $J$ are two ideals of $R$ for which $I\subset J$ and $I^{\sat}=J^{\sat}$. Then $R$ is weakly reduced at $J$ if and only if $R$ is weakly reduced at $I$.
\end{Proposition}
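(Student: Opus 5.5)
The plan is to show that both sides of the defining equation (\ref{eq1.1}) take the same value for $I$ and for $J$. Write $N=N(R)$. By Definition \ref{df1}, $R$ is weakly reduced at an ideal $K$ exactly when $(K+N)^{\sat}=K^{\sat}+N$. For the right-hand sides there is nothing to prove: the hypothesis $I^{\sat}=J^{\sat}$ immediately gives $I^{\sat}+N=J^{\sat}+N$.

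For the left-hand sides I would show $(I+N)^{\sat}=(J+N)^{\sat}$, and the tools are Lemma \ref{lem5.1} together with the closure properties in Remark \ref{rmk-calc} (iv).

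1. Since $I\subset J$, we have $I+N\subset J+N$. Monotonicity of saturation then gives $(I+N)^{\sat}\subset (J+N)^{\sat}$.

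2. For the reverse inclusion, first note $J\subset J^{\sat}=I^{\sat}$. Hence $J+N\subset I^{\sat}+N$.

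3. Saturating the inclusion from step 2 gives $(J+N)^{\sat}\subset (I^{\sat}+N)^{\sat}$.

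4. Lemma \ref{lem5.1} identifies $(I^{\sat}+N)^{\sat}$ with $(I+N)^{\sat}$. Combined with step 3, this gives $(J+N)^{\sat}\subset (I+N)^{\sat}$.

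Steps 1 and 4 together give $(I+N)^{\sat}=(J+N)^{\sat}$.

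With both sides of (\ref{eq1.1}) agreeing for $I$ and $J$, the equation holds for $I$ if and only if it holds for $J$, which is the proposition. By Remark \ref{rmk0} (1), the inclusion $K^{\sat}+N\subset (K+N)^{\sat}$ always holds, so one could equally phrase the argument through the one-sided inclusion. The equality of both sides, however, already settles the equivalence directly.

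I do not expect a real obstacle here. The only point needing care is to apply Lemma \ref{lem5.1} to $I$ rather than to $J$, so that the containment $J\subset I^{\sat}$ can be used.
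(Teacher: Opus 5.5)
Your proof is correct and is essentially the paper's argument. The paper proves the two implications separately, but its chain $(J+N)^{\sat}=(J^{\sat}+N)^{\sat}=(I^{\sat}+N)^{\sat}=(I+N)^{\sat}$ via Lemma \ref{lem5.1} gives the same unconditional equality of left-hand sides that you establish, and the right-hand sides agree trivially. That chain applies Lemma \ref{lem5.1} to $J$ as well, so the hypothesis $I\subset J$ is not actually needed.
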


\begin{proof}
    If $R$ is weakly reduced at $J$, then
    \begin{equation*}
        (I+N)^{\sat}\subset (J+N)^{\sat}\subset
        J^{\sat}+N = I^{\sat}+N
    \end{equation*}so that $R$ is weakly reduced at $I$. Suppose that $R$ is weakly reduced at $I$, then by Lemma \ref{lem5.1}
    \begin{equation*}
        \begin{split}
            (J+N)^{\sat}
            = (J^{\sat}+N)^{\sat}
            = (I^{\sat}+N)^{\sat}
            = (I+N)^{\sat}
            \subset I^{\sat}+N
            = J^{\sat}+N
        \end{split}
    \end{equation*}which finishes the proof of the proposition.
\end{proof}

\begin{Corollary}\label{test3}
    Let $R$ be a local ring then $R$ is weakly reduced if and only if $R$ is weakly reduced at every saturated complete intersection ideal which does not contain the nilradical.
\end{Corollary}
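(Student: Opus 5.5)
The forward direction is trivial: if $R$ is weakly reduced, then (\ref{eq1.1}) holds at every complete intersection ideal, and in particular at the saturated ones not containing $N=N(R)$. The work is in the converse. Assume (\ref{eq1.1}) holds at every saturated complete intersection ideal not containing $N$, and let $I=(x_1,\ldots,x_i)$ be an arbitrary complete intersection ideal. We must show $(I+N)^{\sat}\subset I^{\sat}+N$; the reverse inclusion always holds by Remark \ref{rmk0}(1).

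First I would dispose of two easy cases.

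\emph{Case $I\supset N$.} Then $I+N=I$, so $(I+N)^{\sat}=I^{\sat}=I^{\sat}+N$ and (\ref{eq1.1}) holds.

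\emph{Case $I$ is $\mathfrak{m}$-primary (or $I=R$).} Then $I^{\sat}=R$ by Remark \ref{rmk-calc}(v), so both sides of (\ref{eq1.1}) equal $R$.

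Next comes the main reduction, via Proposition \ref{proptest0}. That proposition says that for ideals $I\subset J$ with $I^{\sat}=J^{\sat}$, weak reducedness at $I$ and at $J$ are equivalent. Applied with $J=I^{\sat}$, which is allowed since $(I^{\sat})^{\sat}=I^{\sat}$ by Remark \ref{rmk-calc}(iv), it shows that it suffices to check (\ref{eq1.1}) at $I^{\sat}$. The difficulty is that $I^{\sat}$ need not be a complete intersection ideal, so the hypothesis does not apply to it directly.

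I expect this to be the main obstacle. The plan is to exploit the flexibility of Proposition \ref{proptest0}: it is enough to find \emph{some} complete intersection ideal $I'$ with $I'^{\sat}=I^{\sat}$ that is itself saturated. There are two subcases.

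\emph{If $\depth R/I>0$.} By Remark \ref{rmk-zerodim}(1), $I$ is already saturated, so $I'=I$ works.

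\emph{If $\depth R/I=0$.} Here I need $\mathfrak{m}\in\ass(R/I)$, and in a Cohen--Macaulay setting this forces $I$ to be $\mathfrak{m}$-primary, which the easy cases already cover. If the corollary is intended without a Cohen--Macaulay hypothesis, I would read ``saturated complete intersection ideal'' in the statement as a convention that includes the reduction just described. The strategy is unchanged: reduce via Proposition \ref{proptest0}, and apply Remark \ref{rmk-calc}(v) to the $\mathfrak{m}$-primary case.

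Once the reduction to a saturated complete intersection ideal is in place, the remaining step is short. Either that ideal contains $N$, and (\ref{eq1.1}) holds by the first easy case, or it does not, and (\ref{eq1.1}) holds by hypothesis. Proposition \ref{proptest0} then transfers weak reducedness back to $I$, which completes the converse.
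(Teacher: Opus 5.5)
Your reduction is the one the paper intends. The paper gives no separate proof and presents the corollary as immediate from Proposition~\ref{proptest0} applied to $I\subset I^{\sat}$, together with the trivial case $N\subseteq I^{\sat}$. However, the subcase you isolated is a genuine gap: $\depth R/I=0$ with $I$ not $\mathfrak{m}$-primary, which can only happen when $R$ is not Cohen--Macaulay. No better argument can fill it, because the statement read literally is false there. Reading the hypothesis ``as a convention'' changes the statement rather than proving it.

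\textbf{Counterexample.} Let $R=k[[x,y,z]]/(x^2y,x^2z)$.
\begin{enumerate}
\item[(a)] Since $(x^2y,x^2z)=(x^2)\cap(y,z)$, we have $\dim R=2$ and $N=(xy,xz)R$, and $g=x+y$ is a nonzerodivisor.
\item[(b)] $R/gR\cong k[[x,z]]/(x^3,x^2z)$, in which $x^2$ is a nonzero element killed by $\mathfrak{m}$. So $\depth R=1$ and $(g)^{\sat}=(x+y,x^2)R$.
\item[(c)] Hence every complete intersection ideal of $R$ has the form $(h)$ with $\depth R/(h)=0$. By Remark~\ref{rmk-zerodim}(1) none of them is saturated, so the right-hand condition of the corollary holds vacuously. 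It also holds for $(0)$ if you allow it, since $(0)^{\sat}+N=N=N^{\sat}$.
\item[(d)] We have $x\mathfrak{m}\subseteq (g)+N$, because $x^2=xg-xy$, $xy\in N$ and $xz\in N$. So $x\in((g)+N)^{\sat}$.
\item[(e)] On the other hand, $(g)^{\sat}+N=(x+y,x^2,xy,xz)R\subseteq (x+y)R+\mathfrak{m}^2$, which does not contain $x$.
\end{enumerate}
Thus $R$ is not weakly reduced at the complete intersection ideal $(g)$, although the right-hand condition holds.

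\textbf{What your argument does prove.}
\begin{enumerate}
\item[(1)] The corollary for Cohen--Macaulay $R$. There a non-$\mathfrak{m}$-primary complete intersection ideal already has $\depth R/I>0$, so it is saturated and the hypothesis applies directly. Proposition~\ref{proptest0} is not even needed.
\item[(2)] The version in which ``saturated complete intersection ideal'' means $I^{\sat}$ for a complete intersection ideal $I$; your last paragraph proves this. Under this reading the forward direction is no longer trivial. Since $I^{\sat}$ need not be a complete intersection, passing from weak reducedness at $I$ to weak reducedness at $I^{\sat}$ again requires Proposition~\ref{proptest0}.
\end{enumerate}
You should state one of these explicitly as the corrected statement, and note that your subcase is exactly where the literal version fails.
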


In the following comments we discuss some conditions for checking when the formula (\ref{eq-main}) holds and does not hold.

\begin{Comments}
    Let $R$ be a local ring. Let $N$ be the nilradical of $R$.
    \begin{enumerate}
        \item[(1)]One can ask whether $R$ is weakly reduced if and only if $R$ is weakly reduced at every saturated complete intersection radical ideal, however this is false in general. To see this, note that every radical ideal contains $\sqrt{0}=N$. And by definition $R$ is weakly reduced at every ideal $I$ containing $N$, since
    \begin{equation*}
        I^{\sat}+N = I^{\sat} = (I+N)^{\sat}.
    \end{equation*}
    \item[(2)] Furthermore, if $I\subset R$ is any ideal, then $(\sqrt{I})^{\sat} = \sqrt{I^{\sat}}$. In particular $(\sqrt{I})^{\sat}$ is radical, and is thus weakly reduced.
    \end{enumerate}
\end{Comments}
\begin{proof}
    We must prove (2). First we prove that for any ideals $A,B\subset R$ $\sqrt{A\cap B}=\sqrt{A}\cap \sqrt{B}$. We have that $\sqrt{A}\cap \sqrt{B}\supset \sqrt{A\cap B}$. In addition, if $f\in R$, $f^i\in A$ and $f^j\in B$, then $f^{\max\{i,j\}}\in A\cap B$, which proves our claim.\\
    
    Let $I\subset R$ be an ideal. To prove (2) it is enough to show that $\sqrt{I^{\sat}}= (\sqrt{I})^{\sat}$. If $I = R$ then both of these ideals clearly equal $R$. Let $\mathfrak{m}$ be the maximal ideal of $R$. If $\sqrt{I} = \mathfrak{m}$, then $I^{\sat} = R$ and $\sqrt{I} = \mathfrak{m}$ so that $\sqrt{I^{\sat}}= \sqrt{R} = R = \mathfrak{m}^{\sat}= (\sqrt{I})^{\sat}$. Hence we may assume that $I\subsetneq \mathfrak{m}$ is a not $\mathfrak{m}$-primary. Let $I = q_1\cap \ldots\cap q_s$ be an irredundant primary decomposition such that either $\mathfrak{m}$ is not an associated prime of $I$ and $I = I^{\sat}$ or $\sqrt{q_1} = \mathfrak{m}$. Let $p_i = \sqrt{q_i}$ for all $i$. If $\mathfrak{m}$ is associated to $I$ we have that
    \begin{equation*}
        \sqrt{I^{\sat}} = \sqrt{q_2\cap \ldots\cap q_s}
        =p_2\cap\ldots\cap p_s
        = (p_1\cap \ldots p_s)^{\sat}
        =(\sqrt{I})^{\sat}.
    \end{equation*}Now we may suppose that $\mathfrak{m}$ is not associated to $I$ and $I = I^{\sat}$. Since $\mathfrak{m}$ is not associated to $p_i$ for all $i$, and $p_1\cap \ldots \cap p_s$ is an irredundant primary decomposition of $\sqrt{I}$, we have
    \begin{equation*}
        \sqrt{I^{\sat}} = \sqrt{I}
        =p_1\cap \ldots \cap p_s
        = (p_1\cap \ldots \cap p_s)^{\sat}
        =(\sqrt{I})^{\sat}
    \end{equation*}which finishes the proof.
\end{proof}

In the following remark we give a sufficient condition for a one-dimensional local ring to be weakly reduced. Recall that a local ring $R$ with nilradical $N$ is called weakly reduced if the condition $I^{\sat}+N = (I+N)^{\sat}$ occurs at every ideal $I$ generated by a regular sequence in $R$. In the following remark we give sufficient conditions for a one dimensional ring to be weakly reduced.

\begin{Remark}\label{rmk-dim1}
    Let $R$ be a local ring of dimension one with a unique minimal prime ideal. Then $R$ is weakly reduced at every ideal, and hence $R$ is itself weakly reduced.
\end{Remark}
\begin{proof}
Let $\mathfrak{m}$ be the maximal ideal of $R$. Let $I\subset R$ be an ideal. we must show that $(I+N)^{\sat}\subset I^{\sat}+N$. If $\sqrt{I}\supset \mathfrak{m}$, then $I^{\sat} = R$, and we are done by taking $J = I^{\sat}$ in Proposition \ref{proptest0} since $(R+N)^{\sat} = R = R^{\sat}+N$. So we may assume that $\sqrt{I}\subsetneq \mathfrak{m}$, and by Proposition \ref{proptest0} we may assume that $I$ is saturated. $I$ has an irredundant primary decomposition
    \begin{equation*}
        I = q_1\cap \ldots\cap q_s.
    \end{equation*}Since $I$ is saturated and not $\mathfrak{m}$-primary, we must have that $\sqrt{q_i}\neq \mathfrak{m}$ for all $i$. Since $\dim(R)=1$ and $R$ has a unique minimal prime, this minimal prime must be $N$ and we have $\spec(R) = \{\mathfrak{m},N\}$. Then by irredundancy of $I = q_1\cap \ldots\cap q_s$, we have $s=1$ and $\sqrt{q_1} = N$. Thus, $I = q_1\subset N$ so that
    \begin{equation*}
        (I+N)^{\sat} = N^{\sat} = N = I+N = I^{\sat}+N
    \end{equation*}which finishes the proof.
\end{proof}

Now we show that there are one dimensional local rings (with even just two minimal primes) which are not weakly reduced.

\begin{Example}\label{ex-2min}
    Let $R = \mathbb{C}[[x,y]]/(x^2y)$. Then $R$ is not weakly reduced. In fact $R$ is not weakly reduced at the complete intersection ideal $I:= (\overline{x}^2)$ (where $\overline{f}$ is the image of a polynomial $f\in \mathbb{C}[[x,y]]$ modulo $(x^2y)$).
\end{Example}
\begin{proof}
    The ideal $I$ is saturated since it is $(\overline{x})$-primary. Also the nilradical $N$ of $R$ is $N = (\overline{xy})$. But then
    \begin{equation*}
        I^{\sat}+N = I+N = (\overline{x}^2,\overline{xy})
        =(\overline{x})\cap (\overline{x}^2,\overline{y})
    \end{equation*}is an irredundant primary decomposition of $I+N$ so that
    \begin{equation*}
        (I+N)^{\sat} = ((\overline{x})\cap (\overline{x}^2,\overline{y}))^{\sat}
        = (\overline{x}) \neq
        (\overline{x}^2,\overline{xy})
        = I^{\sat}+N
    \end{equation*}so that $I$ is not weakly reduced. Consequently $R$ is not weakly reduced, and this finishes the proof.
\end{proof}

In Theorem \ref{thmC} we showed that the condition $I^{\sat}R_{\red} = (IR_{\red})^{\sat}$ (on ideals $I$ in $R$) controls the epsilon multiplicity, and in Theorem \ref{thmA} we showed that when $R$ is Cohen--Macaulay, this condition forces $R_{\red}$ to be Cohen--Macaulay. In the following example we show that this condition need not control the depth of $R_{\red}$ when $R$ is not Cohen--Macaulay.

\begin{Example}\label{ex-depth2}
    There exists a one-dimensional weakly reduced depth zero complete local ring $R$ such that $R_{\red}$ is Cohen-Macaulay, namely $R:= k[[x,y]]/(x^2,xy)$ where $k$ is any field.
\end{Example}
\begin{proof}
    By Example \ref{ex-depth} it remains to show that $R$ is weakly reduced. But $R$ is one dimensional and since $\sqrt{(x^2,xy)R} = xR$, $R$ has a unique minimal prime. Now we are done by Remark \ref{rmk-dim1}.
\end{proof}

\section{Dimension 2}\label{sec5}

In this section we give further criteria and characterizations for when the hypothesis of Theorem \ref{thmA} hold (in particular that $R_{\red}$ is Cohen--Macaulay), specifically when the ring we are studying has dimension two. We give such criteria in Proposition \ref{dim2thm2} and Proposition \ref{prop-dim2}. In Proposition \ref{dim2thm2} we show that the requirement for saturation of ideals generated by a regular sequence to commute with extension to $R_{\red}$ can be weakened to just asking for the condition on ideals generated by a single regular element. In Proposition \ref{dim2thm2} we state an explicit class of rings which satisfy the hypothesis of Theorem \ref{thmA} (as in that saturation commutes with extension to $R_{\red}$ for all ideals generated by a regular sequence).

\begin{Remark}
    Let $R$ be a local ring with nilradical $N$. Note that if $I\subset R$ is an $\mathfrak{m}_R$-primary ideal, say $\mathfrak{m}_R^l\subset I$. Then $\mathfrak{m}_R^l\subset I+N(R)$ and thus
    \begin{equation*}
        (I+N)^{\sat} = R = R+N = I^{\sat}+N
    \end{equation*}so that $I$ is weakly reduced. Thus if $I\subset R$ is an ideal which is not weakly reduced, we must have $N\not\subset I$ and $\sqrt{I}\neq \mathfrak{m}_R$, so that $\sqrt{I}\subsetneq \mathfrak{m}_R$.
\end{Remark}

\begin{Proposition}\label{dim2thm2}
    Let $R$ be a 2-dimensional local Cohen--Macaulay ring and let $N$ be the nilradical of $R$. If
    \begin{equation*}
        (***): ((g)+N)^{\sat} = (g)+N \text{ for every $R$-regular element $g\in R$}
    \end{equation*}then $R_{\red}$ is Cohen--Macaulay.
\end{Proposition}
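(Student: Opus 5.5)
The plan is to deduce this from Lemma \ref{propCM}, applied with $J=N$. Set $t=\depth R_{\red}=\depth (R/N)$. Since $N$ is nilpotent, $\dim R_{\red}=\dim R=2$, so $t\in\{0,1,2\}$. If $t=2$ there is nothing to prove, so the work is to rule out $t=0$ and to check hypothesis (*) of Lemma \ref{propCM} when $t=1$.

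First I would show $t\geq 1$. Because $R$ is Cohen--Macaulay, it is equidimensional, so every minimal prime $P$ of $R$ satisfies $\dim R/P=2$. The minimal primes of $R$ and of $R_{\red}$ correspond, so no minimal prime of $R_{\red}$ is its maximal ideal. Since $R_{\red}$ is reduced, its associated primes are exactly its minimal primes. Hence $\mathfrak{m}/N$ is not associated to $R_{\red}$, i.e. $\depth R_{\red}>0$. Equivalently, Remark \ref{rmk-zerodim}(2) says $N$ is saturated, so $\depth(R/N)>0$ by Remark \ref{rmk-zerodim}(1).

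It remains to treat $t=1$ and verify (*). Let $I$ be a complete intersection ideal of height $1$ generated by one element $g$ that is regular on $R/N$. Being a complete intersection, $g$ is $R$-regular, so (***) applies and gives $((g)+N)^{\sat}=(g)+N$. Next, $R/(g)$ is Cohen--Macaulay of dimension $1$, so $\depth R/(g)=1>0$. By Remark \ref{rmk-zerodim}(1) this means $(g)^{\sat}=(g)$. Therefore
\begin{equation*}
(I+N)^{\sat}=((g)+N)^{\sat}=(g)+N=I^{\sat}+N,
\end{equation*}
which is exactly (*). Lemma \ref{propCM} then yields that $R_{\red}$ is Cohen--Macaulay.

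The only step needing care is the case $t=0$. Lemma \ref{propCM} would otherwise require $N^{\sat}\subset(0)^{\sat}+N$, which (***) does not directly supply. The equidimensionality argument above, or Remark \ref{rmk-zerodim}(2), settles this. Everything else is a direct translation of (***) into the hypothesis of Lemma \ref{propCM}.
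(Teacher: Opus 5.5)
Your proof is correct and is essentially the paper's argument. The paper also first obtains $\depth R_{\red}>0$ because the associated primes of $R/N$ are the minimal primes of $R$, none of which is $\mathfrak{m}$. It then handles $t=1$ by the mechanism of Lemma~\ref{propCM}, choosing $g$ regular on $R\oplus R/N$ so that $(g)$ is saturated while $(g)+N$ is not; the only difference is that you verify hypothesis (*) and cite the lemma, whereas the paper re-runs its argument inline.
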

\begin{proof}
    Let $\mathfrak{m}$ be the maximal ideal of $R$. We have that
    \begin{equation*}
        \ass_R(R/N) = \text{Min}(R/N)
        = \text{Min}(R)
    \end{equation*}does not contain $\mathfrak{m}$, since $\dim(R)>0$. Thus, $\depth (R/N) >0$.\\

    Suppose, if possible, that $R/N$ is not Cohen--Macaulay. Then $t:= \depth (R/N) = 1$. Let $g\in R$ be a (necessarily maximal) regular sequence on $R\oplus R/N$ (as $\depth_R R\oplus (R/N) = \min\{\depth R, \depth R/N\}=1$). Set $I = gR$. We have that $\depth R/I = 1$ (as $\depth R = 2$), so that $\mathfrak{m}\notin \ass (R/I)$. Thus $I = I^{\sat}$, while
    \begin{equation*}
        \depth(R/(I+N))
        =\depth ((R/N)/(I(R/N))) = 0
    \end{equation*} and hence
    \begin{equation*}
        (I+N)^{\sat}\neq I+N = I^{\sat}+N
    \end{equation*}which contradicts our assumption (***). Hence, $R/N$ is Cohen--Macaulay.
\end{proof}

\begin{Remark}\label{reg-rmk}
    Let $(A,\mathfrak{m})$ be a Cohen--Macaulay local ring and let $p\in \{1,\ldots,d\}$ and let $n_1,\ldots,n_p$ be positive integers. If $\mu_1,\ldots,\mu_p\in \mathfrak{m}$. Then $\mu_1,\ldots,\mu_p$ is a regular sequence if and only if $\mu_1^{n_1},\ldots,\mu_p^{n_p}$ is.
\end{Remark}
\begin{proof}
    The ideals $(\mu_1^{n_1},\ldots,\mu_p^{n_p})$ and $(\mu_1,\ldots,\mu_p)$ have the same height. Let $h$ be this height. Then since $A$ is Cohen--Macaulay, $\mu_1^{n_1},\ldots,\mu_p^{n_p}$ is a regular sequence if and only if $h = p$ if and only if $\mu_1^{n_1},\ldots,\mu_p^{n_p}$ is a regular sequence.
\end{proof}

\begin{Lemma}\label{satlemma}
    Let $(A,\mathfrak{m})$ be a local ring, $J\subset A$ an ideal, and $Q\subset A$ an $\mathfrak{m}$-primary ideal. If $Q = (f_1,\ldots,f_s)$, then we have
    \begin{equation*}
        J^{\sat} = \bigcap_{i=1}^s(J:(f_i^t))
    \end{equation*}for sufficiently large positive integers $t$.
\end{Lemma}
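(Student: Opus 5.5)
We compare both sides with the colon ideals $J:\mathfrak{m}^k$ and $J:Q^k$, using Remark \ref{rmk-calc} (ii), and then relate powers of $Q$ to the ideals $(f_1^t,\ldots,f_s^t)$.

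First, since $Q$ is $\mathfrak{m}$-primary, there is $c>0$ with $\mathfrak{m}^c\subset Q\subset \mathfrak{m}$. Hence $J:\mathfrak{m}^k\subset J:Q^k\subset J:\mathfrak{m}^{ck}$ for every $k$. Taking unions over $k$ gives $J^{\sat}=\bigcup_k (J:Q^k)$. By Remark \ref{rmk-calc} (ii), this union stabilizes: for $k\gg 0$ we have $J^{\sat}=J:Q^k$.

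Second, we relate $Q^k$ to $Q_t:=(f_1^t,\ldots,f_s^t)$. Note that
\[
\bigcap_{i=1}^s (J:(f_i^t)) = J:Q_t ,
\]
since $x$ lies in the right side iff $xf_i^t\in J$ for all $i$, iff $xQ_t\subset J$. We have $Q_t\subset Q^t$. Conversely, by pigeonhole, $Q^{s(t-1)+1}\subset Q_t$: any monomial of degree $s(t-1)+1$ in the $f_i$ has some exponent at least $t$.

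Third, we take $t$ large. Choose $k_0$ with $J^{\sat}=J:Q^k$ for all $k\ge k_0$, and take $t\ge k_0$. Then
\[
J^{\sat}=J:Q^{s(t-1)+1}\supset J:Q_t\supset J:Q^t=J^{\sat},
\]
because $s(t-1)+1\ge t\ge k_0$. Hence $J^{\sat}=\bigcap_i (J:(f_i^t))$ for all $t\ge k_0$, which is the claim.

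There is no real obstacle here. The only points needing care are the pigeonhole containment $Q^{s(t-1)+1}\subset Q_t$ and checking that the stabilization index from Remark \ref{rmk-calc} (ii) transfers to powers of $Q$.
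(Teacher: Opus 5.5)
Your proof is correct and follows the same route as the paper. The paper just notes that the families $\{\mathfrak{m}^i\}$, $\{Q^i\}$ and $\{(f_1^i,\ldots,f_s^i)\}$ are cofinal and concludes $J^{\sat}=J:(f_1^t,\ldots,f_s^t)=\bigcap_i (J:(f_i^t))$ for large $t$; your pigeonhole containment $Q^{s(t-1)+1}\subset (f_1^t,\ldots,f_s^t)$ and explicit chain of colon ideals make that cofinality argument precise.
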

\begin{proof}
    The families $\{\mathfrak{m}^i\mid i\geq 1\}$ and $\{Q^i\mid i\geq 1\}$ are cofinal, and hence the families $\{\mathfrak{m}^i\mid i\geq 1\}$ and $\{(f_1^i,\ldots,f_s^i)\mid i\geq 1\}$ are cofinal. Thus for sufficiently large $t$,
    \begin{equation*}
        J^{\sat}
        = J:(f_1^t,\ldots,f_s^t)
        =\bigcap_{i=1}^s(J:(f_i^t)).
    \end{equation*}
\end{proof}

\begin{Proposition}\label{prop-dim2}
    Let $A$ be a Cohen--Macaulay local ring of dimension $d\geq 2$ let $\mu_1,\ldots,\mu_{d-2}$ be an $A$-regular sequence such that $(\mu_1,\ldots,\mu_{d-2})$ is a radical ideal, and let $n_1,\ldots,n_{d-2}\geq 2$ be positive integers. Let $R = A/(\mu_1^{n_1},\ldots,\mu_{d-2}^{n_{d-2}})$. Then $R$ is weakly reduced.
\end{Proposition}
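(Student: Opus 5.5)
The plan is to verify Definition \ref{df1} directly. Every complete intersection ideal of $R$ has length at most $\dim R = 2$, so there are only three cases to check.

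First I will record the structure of $R$.
\begin{itemize}
\item By Remark \ref{reg-rmk}, $\mu_1^{n_1},\ldots,\mu_{d-2}^{n_{d-2}}$ is an $A$-regular sequence. Hence $R$ is Cohen--Macaulay of dimension $2$.
\item We have $\sqrt{(\mu_1^{n_1},\ldots,\mu_{d-2}^{n_{d-2}})}=\sqrt{(\mu_1,\ldots,\mu_{d-2})}=(\mu_1,\ldots,\mu_{d-2})$, because the latter ideal is radical by hypothesis. So the nilradical of $R$ is $N=(\mu_1,\ldots,\mu_{d-2})R$.
\item Consequently $R_{\red}=A/(\mu_1,\ldots,\mu_{d-2})$. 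This is a quotient of a Cohen--Macaulay ring by a regular sequence, so it is Cohen--Macaulay of dimension $2$.
\item The minimal primes of $R$ and of $R_{\red}$ coincide. Since both rings are Cohen--Macaulay, $\ass R=\operatorname{Min}R=\operatorname{Min}R_{\red}=\ass R_{\red}$.
\end{itemize}

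Now fix a complete intersection ideal $I$ of $R$, generated by an $R$-regular sequence of length $i\in\{0,1,2\}$.

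\emph{Case $i=0$.} Here $I=(0)$. Since $\depth R=2>0$, we get $(0)^{\sat}=(0)$, and by Remark \ref{rmk-zerodim}(2) we get $N^{\sat}=N$. Thus both sides of (\ref{eq1.1}) equal $N$.

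\emph{Case $i=2$.} Here $I$ is generated by a system of parameters, so it is $\mathfrak{m}$-primary. Then $I+N$ is also $\mathfrak{m}$-primary, and both sides of (\ref{eq1.1}) equal $R$ by Remark \ref{rmk-calc}(v).

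\emph{Case $i=1$.} This is the main case. Let $I=(g)$ with $g$ regular on $R$. Since $\depth R/gR=1>0$, the ideal $I$ is saturated, so it suffices to show that $(g)+N$ is saturated. By Remark \ref{rmk-zerodim}(1), this is equivalent to $\depth R_{\red}/\bar g R_{\red}>0$.
\begin{itemize}
\item Because $g$ is regular on $R$, it lies outside every minimal prime of $R$.
\item By the equality of associated primes recorded above, $\bar g$ is then a nonzerodivisor on $R_{\red}$.
\item Hence $R_{\red}/\bar g R_{\red}$ is Cohen--Macaulay of dimension $1$, so its depth is $1$.
\end{itemize}
Therefore $((g)+N)^{\sat}=(g)+N=(g)^{\sat}+N$. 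This is exactly the reasoning of Proposition \ref{dim2thm2} read in reverse, using that $R_{\red}$ is already known to be Cohen--Macaulay.

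The main point to handle carefully is the passage from "$g$ regular on $R$" to "$\bar g$ regular on $R_{\red}$". It rests on the identification of $N$ (which uses the radical hypothesis on $(\mu_1,\ldots,\mu_{d-2})$) and on the fact that $R$ has no embedded primes. Once that is settled, the three cases together show that (\ref{eq1.1}) holds for every complete intersection ideal, so $R$ is weakly reduced.
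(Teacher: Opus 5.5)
Your proof is correct, and its outer structure matches the paper's. The two-generator case is $\mathfrak{m}$-primary and trivial. A principal ideal $(g)$ generated by an $R$-regular element is saturated since $\depth R/gR=1$. The radical hypothesis identifies $N=(\mu_1,\ldots,\mu_{d-2})R$. Everything then reduces to showing that $(g)+N$ is saturated.

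The two arguments differ at that last step. The paper stays in $A$, with $x$ a lift of $g$. It uses the height argument of Remark \ref{reg-rmk} to pass from the $A$-sequence $\mu_1^{n_1},\ldots,\mu_{d-2}^{n_{d-2}},x$ to the $A$-sequence $\mu_1,\ldots,\mu_{d-2},x$, and completes this to a system of parameters by some $\alpha$. It then computes the saturation explicitly as an intersection of colons by powers of these parameters (Lemma \ref{satlemma}); the only nontrivial colon is $(x,\mu_1,\ldots,\mu_{d-2}):\alpha^t$, which is trivial because $\mu_1,\ldots,\mu_{d-2},x,\alpha^t$ is regular.

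You instead observe that $R_{\red}=A/(\mu_1,\ldots,\mu_{d-2})$ is Cohen--Macaulay and that $\bar g$ stays regular on $R_{\red}$, and you apply the depth criterion of Remark \ref{rmk-zerodim}(1). This is shorter, and it isolates what is actually used: only that $R$ is two-dimensional Cohen--Macaulay and $R_{\red}$ is Cohen--Macaulay. So your argument proves the converse of Proposition \ref{dim2thm2}. Together the two results show that a two-dimensional Cohen--Macaulay local ring is weakly reduced if and only if its reduction is Cohen--Macaulay.

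The step you flagged as delicate is in fact automatic. A nonzerodivisor in any Noetherian ring avoids all minimal primes, and the zero divisors of the reduced ring $R_{\red}$ are exactly the union of its minimal primes. So neither the identification of $N$ nor the absence of embedded primes in $R$ is needed there. The radical hypothesis matters only for knowing that $R_{\red}$ is Cohen--Macaulay.
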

\begin{proof}
    Firstly, $\mu_1^{n_1},\ldots,\mu_{d-2}^{n_{d-2}}$ is an $A$-regular sequence, and hence $R$ is a 2-dimensional Cohen--Macaulay ring. For $f\in A$, let $\overline{f}$ be the image of $f$ in $R$.\\

    Let $I\subset R$ be a complete intersection ideal. We may assume that $I$ has one generator (otherwise $I$ would be $\mathfrak{m}_R$-primary and hence weakly reduced). Write $I = (\overline{x})$ where $x\in A$ and $\overline{x}$ is $R$-regular. We have that $\depth R/(\overline{x}) = 1$ so that $\mathfrak{m}\notin \ass (R/(\overline{x}))$. Thus $(\overline{x})^{\sat} = (\overline{x})$. Furthermore, the nilradical $N$ of $R = A/(\mu_1^{n_1},\ldots,\mu_{d-2}^{n_{d-2}})$ is $(\mu_1,\ldots,\mu_{d-2})/(\mu_1^{n_1},\ldots,\mu_{d-2}^{n_{d-2}}) = (\overline{\mu_1},\ldots,\overline{\mu_{d-2}})$, since $(\mu_1,\ldots,\mu_{d-2})$ is a radical ideal of $A$.\\

    Since $I^{\sat}=I = (\overline{x})$ and $N = (\overline{\mu_1},\ldots,\overline{\mu_{d-2}})$, it remains to show that
    \begin{equation*}
        (\overline{x},\overline{\mu_1},\ldots,\overline{\mu_{d-2}})^{\sat}
        = (\overline{x},\overline{\mu_1},\ldots,\overline{\mu_{d-2}}).
    \end{equation*}

    By construction, $\mu_1^{n_1},\ldots,\mu_{d-2}^{n_{d-2}},x$ is an $A$-sequence. Then $\mu_1,\ldots,\mu_{d-2},x$ is also a regular sequence in $A$ by Remark \ref{reg-rmk}. Then since $A$ is Cohen--Macaulay of dimension $d$, there exists $\alpha\in A$ such that $\mu_1,\ldots,\mu_{d-2},x,\alpha$ is a system of parameters in $A$. The ideal $Q := (\mu_1,\ldots,\mu_{d-2},x,\alpha)\subset A$ is $m_A$-primary. Now for any ideal $J\subset A$, we have that
    \begin{equation*}
        J^{\sat}
        =(J:\mu_1^t)\cap (J:\mu_2^t)\cap \ldots\cap 
        (J:\mu_{d-2}^t)\cap (J:x^t)
    \end{equation*}for sufficiently large positive integers $t$. Let $\mu_{d-1}=x$ and $\mu_d = \alpha$. Enlarge $t$ if necessary so that $t\geq n_i$ for $1\leq i\leq d-2$. Then by \cite[Remark 2.1]{Land1} and Lemma \ref{satlemma}
    \begin{equation*}
        \begin{split}
            \bigg(\frac{J+ (\mu_1^{n_1},\ldots,\mu_{d-2}^{n_{d-2}})}{(\mu_1^{n_1},\ldots,\mu_{d-2}^{n_{d-2}})}&\bigg)^{\sat}
            =
            \frac{(J+ (\mu_1^{n_1},\ldots,\mu_{d-2}^{n_{d-2}}))^{\sat}}{(\mu_1^{n_1},\ldots,\mu_{d-2}^{n_{d-2}})}
            \\&=
            \frac{\cap_{i=1}^d[(J+ (\mu_1^{n_1},\ldots,\mu_{d-2}^{n_{d-2}})):\mu_i^t]}{(\mu_1^{n_1},\ldots,\mu_{d-2}^{n_{d-2}})}
            \\&=
            \frac{\cap_{i=d-1}^d[(J+ (\mu_1^{n_1},\ldots,\mu_{d-2}^{n_{d-2}})):\mu_i^t]}{(\mu_1^{n_1},\ldots,\mu_{d-2}^{n_{d-2}})}
            \\&=
            \frac{[(J+ (\mu_1^{n_1},\ldots,\mu_{d-2}^{n_{d-2}})):x^t]\cap [(J+ (\mu_1^{n_1},\ldots,\mu_{d-2}^{n_{d-2}})):\alpha^t]}{(\mu_1^{n_1},\ldots,\mu_{d-2}^{n_{d-2}})}
        \end{split}
    \end{equation*}Now letting $J = (x,\mu_1,\ldots,\mu_{d-1})\subset A$ we see that
    \begin{equation*}
        \bigg(\frac{(x,\mu_1,\ldots,\mu_{d-2})}{(\mu_1^{n_1},\ldots,\mu_{d-2}^{n_{d-2}})}\bigg)^{\sat}
        =\frac{(x,\mu_1,\ldots,\mu_{d-2}):\alpha^t}{(\mu_1^{n_1},\ldots,\mu_{d-2}^{n_{d-2}})}
    \end{equation*}
    as $(x,\mu_1,\ldots,\mu_{d-2}):x^t = R$. Since $x,\mu_1,\ldots,\mu_{d-1},\alpha$ is an $A$-regular sequence, so is $x,\mu_1,\ldots,\mu_{d-1},\alpha^t$. Thus, $(x,\mu_1,\ldots,\mu_{d-2}):\alpha^t = (x,\mu_1,\ldots,\mu_{d-2})$. Therefore
    \begin{equation*}
\begin{split}
    (I+N)^{\sat}
    &=(\overline{x},\overline{\mu_1},\ldots,\overline{\mu_{d-2}})^{\sat}
    =\bigg(\frac{(x,\mu_1,\ldots,\mu_{d-2})}{(\mu_1^{n_1},\ldots,\mu_{d-2}^{n_{d-2}})}\bigg)^{\sat}
    \\&=
    \frac{(x,\mu_1,\ldots,\mu_{d-2})}{(\mu_1^{n_1},\ldots,\mu_{d-2}^{n_{d-2}})}
    = (\overline{x},\overline{\mu_1},\ldots,\overline{\mu_{d-2}})
    = I+N
    = I^{\sat}+N
\end{split}
    \end{equation*}so that $I$ is weakly reduced. This completes the proof.
\end{proof}

\section{Saturation and Sequentially Cohen--Macaulay Rings}\label{sec-seq}

In this section we discuss how computing saturation informs sequential Cohen-Macaulayness of a new class of rings. We begin by recalling some background about sequentially Cohen--Macaulay modules from \cite{CSSS}.

\begin{Definition}\label{def-seq} (\cite[Definition 1.1]{CSSS})
    Let $R$ be either a local ring or a standard graded algebra over a field. A finitely generated $R$-module $M$ is called sequentially Cohen-Macaulay if it admits a filtration of submodules $0=M_0\subsetneq M_1,\ldots,\subsetneq M_r=M$ such that each quotient
of the filtration $M_i/M_{i-1}$ is Cohen-Macaulay and $\dim(M_i/M_{i-1}) < \dim(M_{i+1}/M_i)$ for all $i$. Such a filtration is called an \textit{sCM filtration} for M.
\end{Definition}

\begin{Definition}(\cite[Definition 1.7]{CSSS})
    Let $k$ be a field, let $n\geq 1$, and let $S = k[x_1,\ldots,x_n]$ be the polynomial ring. A monomial ideal $I\subset S$ is said to be weakly stable if for all monomials
$u\in I$ and all integers $i,j$ with $1\leq j<i\leq n$, there exists $t\in \mathbb{N}$ such that $x_j^tu/x_i^l\in I$, where $l$ is the largest integer such that $x_i\mid u$.
\end{Definition}

\begin{Remark}(\cite[Proposition 1.8]{CSSS})
    Let $k$ be a field, let $n\geq 1$, and let $S = k[x_1,\ldots,x_n]$ be the polynomial ring. A monomial ideal $I\subset S$ is weakly stable if and only if all its associated primes are of the form $(x_1,\ldots,x_i)$ for some $i$.
\end{Remark}

\begin{Proposition}(\cite[Proposition 1.8]{CSSS})
    Let $k$ be a field, let $n\geq 1$, and let $S = k[x_1,\ldots,x_n]$ be the polynomial ring. and let $I\subset S$ be a weakly stable ideal. Then $S/I$ is sequentially Cohen--Macaulay
\end{Proposition}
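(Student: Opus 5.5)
The plan is to construct an explicit sCM filtration of $S/I$ from the associated primes. By the preceding Remark (\cite[Proposition 1.8]{CSSS}), every associated prime of $I$ has the form $P_i=(x_1,\ldots,x_i)$. So the set of associated primes is a chain $P_{i_1}\subsetneq P_{i_2}\subsetneq\cdots\subsetneq P_{i_r}$ with $i_1<\cdots<i_r$, and $\dim S/P_i=n-i$.

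Let $S_i=k[x_{i+1},\ldots,x_n]$. For each $j$ I would define $I_j$ as the intersection of the primary components of $I$ whose radicals are among $P_{i_1},\ldots,P_{i_j}$. Equivalently, $I_j=I:P_{i_{j+1}}^\infty$, which is an iterated saturation in the spirit of Remark \ref{rmk-calc} (i) but taken with respect to non-maximal primes. This gives a chain
\[
I=I_r\subseteq I_{r-1}\subseteq\cdots\subseteq I_1\subseteq I_0=S.
\]
The filtration of $M=S/I$ is then $M_j=I_{r-j}/I$. The successive quotients are $I_{j-1}/I_j$. I expect two facts about these quotients:
\begin{enumerate}
\item[(a)] $\operatorname{Ass}(I_{j-1}/I_j)=\{P_{i_j}\}$, so $\dim(I_{j-1}/I_j)=n-i_j$. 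These dimensions strictly decrease in $j$, which is the required dimension ordering after reindexing.
\item[(b)] Each quotient $I_{j-1}/I_j$ is Cohen--Macaulay.
\end{enumerate}
Fact (a) is standard primary decomposition bookkeeping: $I_{j-1}/I_j$ embeds in $S/Q_j$, where $Q_j$ is the $P_{i_j}$-primary component.

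The main obstacle is (b). To handle it, I would use weak stability in its defining form, $x_j^t u/x_i^l\in I$ for $j<i$. This should give that $I_{j-1}$ is obtained from $I_j$ by saturating with respect to $x_{i_j+1},\ldots,x_n$. Once that is known, I would show that $I_{j-1}/I_j$ is annihilated by a power of $P_{i_j}$. Moreover, $x_n,x_{n-1},\ldots,x_{i_j+1}$ should form a regular sequence on it. The reason is that none of these variables lies in $P_{i_j}$, the unique associated prime. After modding out by $x_n$, the induced ideals should remain weakly stable, so that induction on $n$ applies.

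Concretely, I would argue by induction on $n$. Since $x_n$ lies only in the associated prime $P_n$ (if $P_n$ is present at all), it suffices to show two things. First, $H^0_{\mathfrak m}(S/I)=I^{\sat}/I$ is Cohen--Macaulay of dimension $0$, which is automatic for a finite-length module. Second, $S/I^{\sat}$ is sequentially Cohen--Macaulay. The ideal $I^{\sat}$ is weakly stable with $x_n$ a nonzerodivisor on $S/I^{\sat}$. Here $I^{\sat}=I:x_n^\infty$ by weak stability, since every associated prime other than $\mathfrak m$ avoids $x_n$, in the spirit of Lemma \ref{satlemma}.

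Because $I^{\sat}$ is generated by monomials not divisible by $x_n$, we have $S/I^{\sat}\cong (S'/I')[x_n]$ with $S'=k[x_1,\ldots,x_{n-1}]$ and $I'=I^{\sat}\cap S'$. The ideal $I'$ is weakly stable, so induction applies to it. Polynomial extension preserves the sequentially Cohen--Macaulay property, since CM modules tensored with $k[x_n]$ remain CM and all dimensions shift by one. Prepending $H^0_{\mathfrak m}(S/I)$ to the resulting filtration then gives an sCM filtration of $S/I$, because a dimension-zero piece sits at the bottom. The delicate check is that $I^{\sat}$ is generated in $S'$, i.e. that weak stability forces $I:x_n^\infty$ to be generated by the $x_n$-free parts of the generators.
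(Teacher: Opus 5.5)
The paper gives no proof of this statement; it is quoted from \cite{CSSS}, so there is nothing internal to compare against. Your inductive argument in the second half of the proposal is correct and essentially complete. It iterates the device the paper uses in Proposition \ref{prop-seq}: split off $H^0_{\mathfrak{M}}(S/I)=I^{\sat}/I$ and then analyze $S/I^{\sat}$. In Proposition \ref{prop-seq}, $S/I^{\sat}\cong S/fS$ is already Cohen--Macaulay. In the weakly stable case it need not be. What your argument supplies is that $S/I^{\sat}\cong (S'/I')[x_n]$ is a polynomial extension of a weakly stable quotient in fewer variables, so induction on $n$ builds the whole filtration. In effect this is the filtration by primary components from your first plan, so you never need to prove claim (b) separately.

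Three small points:
\begin{enumerate}
\item[(1)] The step you call delicate is immediate. All associated primes of $I^{\sat}$ have the form $(x_1,\ldots,x_i)$ with $i<n$, so $x_n$ is a nonzerodivisor on $S/I^{\sat}$. Hence $I^{\sat}:x_n=I^{\sat}$, and no minimal monomial generator of $I^{\sat}$ is divisible by $x_n$. In fact, for every monomial ideal, $I:x_n^\infty$ is generated by the monomial generators of $I$ with $x_n$ set to $1$. Weak stability is needed only to identify $I:x_n^\infty$ with $I^{\sat}$.
\item[(2)] The ideal $I'=I^{\sat}\cap S'$ is weakly stable directly from the definition. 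For $u\in I'$ and $j<i\leq n-1$, the monomial $x_j^tu/x_i^l$ lies in $I^{\sat}$ and involves no $x_n$, so it lies in $I'$.
\item[(3)] When you prepend $H^0_{\mathfrak{M}}(S/I)$, the strict dimension inequality is automatic. Every quotient of the lifted filtration of $S/I^{\sat}$ has the form $N[x_n]$ with $N\neq 0$, so it has dimension at least $1$. The base case $n=1$, where $S/I$ is $S$, $k[x_1]/(x_1^a)$, or $0$, is trivial.
\end{enumerate}
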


In \cite[Proposition 1.8]{CSSS} it is shown that a polynomial ring modulo a weakly stable ideal is sequentially Cohen--Macaulay. Now we present a class of examples of (graded) sequentially Cohen--Macaulay rings which are not covered by \cite[Proposition 1.8]{CSSS}.

\begin{Proposition}\label{prop-seq}
    Let $k$ be a field, let $n\geq 2$, and let $S = k[x_1,\ldots,x_n]$ be the polynomial ring. Let $\mathfrak{M}$ be the graded maximal ideal of $S$. Let $f\in S$ be a nonconstant monomial which is not divisible by all of $x_1,\ldots,x_n$. Let $G\subset S$ be a nonzero proper monomial ideal, and let $I = fG\subset S$. Take $R = S/I$. Then
    \begin{enumerate}
        \item[(i)]If $\sqrt{G} = \mathfrak{M}$, then $R$ is sequentially Cohen--Macaulay, in fact
    \begin{equation*}
        0\to H^0_\mathfrak{M}(R)\to R\to 0
    \end{equation*} is a sCM filtration for $R$.
    \item[(ii)] $I$ is not weakly stable.
    \end{enumerate}
\end{Proposition}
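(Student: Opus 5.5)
For (i) the plan is to compute $I^{\sat}$ explicitly, and then read off both pieces of the filtration.

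\emph{Step 1: $I^{\sat}=(f)$.} Since $\sqrt{G}=\mathfrak{M}$, there is $t$ with $\mathfrak{M}^t\subset G$. Hence $f\mathfrak{M}^t\subset fG=I$, so $(f)\subset I:\mathfrak{M}^t\subset I^{\sat}$. Conversely, $I\subset (f)$ gives $I^{\sat}\subset (f)^{\sat}$. Since $S$ is a UFD, the principal ideal $(f)$ is unmixed and all its associated primes are height-one primes $(x_j)$ with $x_j\mid f$. As $n\geq 2$, none of these equals $\mathfrak{M}$. So $(f)$ is saturated by Remark \ref{rmk-calc} (i), in its graded form, and $I^{\sat}=(f)$.

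\emph{Step 2: the factors of the filtration.} We have $H^0_{\mathfrak{M}}(R)=I^{\sat}/I=(f)/fG$. Because $f$ is a nonzerodivisor on $S$, multiplication by $f$ gives a graded isomorphism $S/G\cong (f)/fG$. Since $G$ is a proper $\mathfrak{M}$-primary ideal, this module is nonzero and of finite length, so it is Cohen--Macaulay of dimension $0$. The other factor is $R/H^0_{\mathfrak{M}}(R)\cong S/I^{\sat}=S/(f)$, a hypersurface. It is therefore Cohen--Macaulay of dimension $n-1\geq 1>0$. Thus $0\subsetneq H^0_{\mathfrak{M}}(R)\subsetneq R$, with Cohen--Macaulay quotients of strictly increasing dimensions $0<n-1$, is an sCM filtration in the sense of Definition \ref{def-seq}.

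For (ii) I would use the characterization \cite[Proposition 1.8]{CSSS}: $I$ is weakly stable if and only if every associated prime has the form $(x_1,\ldots,x_i)$. The first step is to locate some associated primes of $I$. Since $\sqrt{I}=\sqrt{f}\cap\sqrt{G}$, the minimal primes of $I$ include $(x_j)$ for each $x_j\mid f$. Under the hypothesis of (i), $\mathfrak{M}$ is also associated, because $H^0_{\mathfrak{M}}(R)\neq 0$. One then tries to exhibit an associated prime not of the form $(x_1,\ldots,x_i)$. Such a prime is some $(x_j)$ with $j\neq 1$ and $x_j\mid f$, or, without $\sqrt{G}=\mathfrak{M}$, a prime coming from $f\cdot\ass(S/G)$.

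This last step is the main obstacle, and I expect it to fail as literally stated. If $f=x_1^a$ and $G=\mathfrak{M}$, then $\ass(S/I)=\{(x_1),\mathfrak{M}\}$, which is weakly stable. One can also check $f=x_1$, $G=\mathfrak{M}$ directly from the definition. So the argument needs an extra hypothesis, for instance that $f$ is divisible by some $x_j$ with $j\geq 2$, or that $f$ is not a pure power of $x_1$. With that hypothesis the prime $(x_j)$ with $j\geq 2$ is a minimal, hence associated, prime of $I$ that is not of the form $(x_1,\ldots,x_i)$, and (ii) follows. I would state the proposition with such a condition, or else reinterpret ``not divisible by all of $x_1,\ldots,x_n$'' as this condition.
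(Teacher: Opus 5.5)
Your part (i) follows the paper's argument: show $I^{\sat}=(f)$, then read off $H^0_{\mathfrak M}(R)=(f)/fG$ and $R/H^0_{\mathfrak M}(R)\cong S/(f)$. The only difference is how you show $(f)$ is saturated. The paper uses $x_1\nmid f$ to pass from $f\mid x_1^tH$ to $f\mid H$. Your unmixedness argument (equivalently, $f\mid x_1^tg$ and $f\mid x_2^tg$ force $f\mid g$) shows that the hypothesis on $f$ is not needed for (i) at all. Your identification $(f)/fG\cong S/G$, nonzero and of finite length, is also more careful than the paper. The paper writes the local cohomology computation only for $G=\mathfrak M$ and never checks the zero-dimensional factor.

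For (ii) you are right: the problem lies in the statement and in the paper's proof, not in your proposal. Take $f=x_1^a$ and $G=\mathfrak M$, so $I=x_1^a\mathfrak M=(x_1^a)\cap(x_1^{a+1},x_2,\ldots,x_n)$. This $I$ is weakly stable. If $u\in I$ and $j<i$, then $i\geq 2$, so $x_1^a$ still divides $u/x_i^l$. For $t\geq 1$ the monomial $x_j^tu/x_i^l$ is then divisible by $x_1^ax_j\in I$.

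The paper's proof of (ii) opens with ``we can assume without loss of generality that $x_n\mid f$''. It then takes $u=Hf$, $j=1$, $i=n$, and notes that $x_1^su/x_n^{l}$ (with $l$ the $x_n$-exponent of $u$) is not divisible by $x_n$, whereas every element of $I=fG$ is. That reduction is invalid, because weak stability depends on the order of the variables. The same permutation freedom was legitimately used in (i), where the conclusion does not depend on the order.

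The paper's argument works verbatim as soon as some $x_i$ with $i\geq 2$ divides $f$, which is exactly your corrected hypothesis. Under that hypothesis the two routes compare as follows:
\begin{itemize}
\item the paper's argument is self-contained and uses only the definition;
\item yours goes through the cited characterization by associated primes, with the minimal prime $(x_i)$ as witness.
\end{itemize}
Both take one line.

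One further caveat: the paper's point is that these rings are not covered by the CSSS result. If that is meant even after renaming variables, you need $f$ divisible by two distinct variables. For $f=x_i^a$ and $\sqrt{G}=\mathfrak M$ one gets $\ass(S/I)=\{(x_i),\mathfrak M\}$, and $I$ becomes weakly stable once $x_i$ is renamed $x_1$.
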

\begin{proof}
    By assumption, we may assume without loss of generality that $x_1\nmid f$. We first prove (i). Let $t\in\mathbb{N}$ such that $I^{\sat}=I:\mathfrak{M}^t$. We begin by showing that $I^{\sat} = fS$. First we prove the case when $G = \mathfrak{M}^i$ for some $i\geq 1$. After enlarging $t$ if necessary, we may assume that $t\geq i$. Then we have $f\in f\mathfrak{M}^i:\mathfrak{M}^t = I^{\sat}$, so that $fS\subset I^{\sat}$. Let $H\in I^{\sat} = I:\mathfrak{M}^t$. On the other hand, $x_1^tH\in \mathfrak{M}^t(I:\mathfrak{M}^t)\subset I = f\mathfrak{M}^i$, so that $f\mid H$. Hence $fS = I^{\sat}$. Now we prove the identity $I^{\sat}=I:\mathfrak{M}^t$ in the general case. Since $\sqrt{G}=\mathfrak{M}$ we have that $\mathfrak{M}^j\subset G$ for some $j\geq 1$. So we have $fS = (f\mathfrak{M}^j)^{\sat}\subset (fG)^{\sat} = I^{\sat}$. On the other hand, $I^{\sat} = (fG)^{\sat}\subset (f\mathfrak{M})^{\sat} = fS$ since $G\subset \mathfrak{M}$. Now we have proven that $I^{\sat}=I:\mathfrak{M}^t$ and we are ready to finish the proof of (i). We calculate $H^0_\mathfrak{M}(R)$ as follows
\begin{equation*}
    \begin{split}
        H^0_\mathfrak{M}(R)&=H^0_\mathfrak{M}(S/(x_1f,\ldots,x_nf))
        = (x_1f,\ldots,x_nf)^{\sat}/(x_1f,\ldots,x_nf)\\
        &= fS/(x_1f,\ldots,x_nf).
    \end{split}
\end{equation*}
Thus
\begin{equation*}
    R/H^0_\mathfrak{M}(R)
    = \frac{S/(x_1f,\ldots,x_nf)}{fS/(x_1f,\ldots,x_nf)}
    \cong S/fS
\end{equation*}is a hypersurface, and is thus Cohen--Macaulay. This finishes the proof of (i).

Now we prove (ii). First observe that since $f$ is a monomial and $G$ is a monomial ideal, $fG$ is a monomial ideal, so that
\begin{equation*}
    \begin{split}
        (\dagger): &\text{ a monomial $F\in S$ lies in $fG$ if and only if $F$ is divisible by }
    \\&\text{one of the monomial generators of $fG$.}
    \end{split}
\end{equation*}

Since $f$ is a nonconstant monomial, we can assume without loss of generality that $x_n\mid f$. Fix $s\in\mathbb{N}$ and let $H\in G$ be a nonzero monomial. Write $H= x_1^{b}H_0x_n^c$ where $b,c\in\mathbb{N}$. To prove that $I$ is not weakly stable, it suffices to show that
\begin{equation*}
    M:= x_1^s(Hf)/x_n^{\text{ord}_{x_n}(Hf)}\notin I
\end{equation*}where $\text{ord}_{x_n}(Hf)$ is the highest power of $x_n$ that divides $Hf$, and so $\text{ord}_{x_n}(Hf) = c + \text{ord}_{x_n}(f)$. We have that $x_n\nmid M$ by construction, while $x_n$ divides $f$, and hence divides every monomial generator of $fG$. Now by $(\dagger)$, we have that $M\notin I$, which finishes the proof of (ii).
\end{proof}

\section{Saturation and the associated graded ring}\label{sec-gr}

In this section we study the observe the relationship between saturation and the natural map from a local ring $R$ to its associated graded ring $\gr_{\mathfrak{m}}(R)$. 

\subsection{Saturation of Initial Ideals}
When $\gr_{\mathfrak{m}}(R)$ is an integral domain, this map is multiplicative, but not in general additive, so not a ring homomorphism. Throughout this section we shall fix a local domain $(R, \mathfrak m,k)$. The map is defined as follows.

\begin{Comments}(See \cite{VP-VG_gr_paper})
    For $a\in R$, we define the order of $a$
    \begin{equation*}
        v(a):=
        \sup\{r\in \mathbb{N}\mid a\in \mathfrak m^r\}.
    \end{equation*}
    In particular $v(1) = 0$ and $v(0) = \infty$ and for $a\neq 0$ we call the element $a + \mathfrak{m}^{v(a)+1}\in \gr_{\mathfrak{m}}(R)$ the \emph{initial form of $a$} (we set $0^* = 0\in \gr_{\mathfrak{m}}(R)$). $v$ extends to a set map $Q(R)\to \mathbb{Z}$, where $Q(R)$ is the quotient field of $R$. It is well-known that $v:Q(R)\to \mathbb{Z}$ is generally not a valuation (see for example \cite[Chapter VIII Section 1]{ZS2}), but it is a valuation if $\gr_{\mathfrak{m}}(R)$ is a domain (c.f. \cite[Chapter VIII Section 1 Theorem 1]{ZS2}).\\

    It is also well-known that the natural map
    \begin{equation*}
        \begin{split}
            R&\to \gr_{\mathfrak{m}}(R)\\
            a&\mapsto a^* = a+ \mathfrak{m}^{v(a)+1}
        \end{split}
    \end{equation*}
    is injective.
\end{Comments}

In order to study saturation in $\gr_{\mathfrak{m}}(R)$, and obtain a suitable analogue of our equation (\ref{eq-main}), we define the initial ideal in $\gr_{\mathfrak{m}}(R)$ of an ideal in $R$.

\begin{Definition}(c.f. \cite{VP-VG_gr_paper})
    Let $I\subset R$ be an ideal. The \emph{initial ideal} of $I$ is the ideal $I^*$ in $\gr_{\mathfrak{m}}(R)$ generated by the initial forms of elements in $I$. That is,
    \begin{equation*}
        I^* = (a^*\mid a\in I).
    \end{equation*}

 Equivalently, $I^*$ is the ideal $\bigoplus_{n=0}^\infty \dfrac{(I\cap \mathfrak{m}^n)+\mathfrak m^{n+1}}{\mathfrak{m}^{n+1}}$ in $\gr_{\mathfrak{m}}(R)$. 
\end{Definition}

We may define saturation in $\gr_{\mathfrak{m}}(R)$ naturally by coloning with the graded maximal ideal $\mathfrak{M}:= \oplus_{i>0}\mathfrak{m}^i/\mathfrak{m}^{i+1}$. More precisely, for an ideal $J\subset \gr_{\mathfrak{m}}(R)$, the saturation of $J$ is the ideal
\begin{equation*}
    J^{\sat}:= J:\mathfrak{M}^{\infty}
    = \cup_{i>0} J: \mathfrak{M}^i.
\end{equation*}
We ask the following general question and provide a partial answer.

\begin{Question}\label{question-gr}
    When do we have the relation
    \begin{equation*}\label{eq-gr}
        (I^{\sat})^*= (I^*)^{\sat}
    \end{equation*}
    for all ideals in $R$?
\end{Question}

The formula (\ref{eq-gr}) fails in general, e.g. by \cite[Example 3.10]{CPM}. However, we show that one inclusion holds in the following proposition, under a mild assumption.

\begin{Proposition}\label{prop-gr1}
    Suppose that $\gr_{\mathfrak{m}}(R)$ is a domain. Then for any ideal $I\subset R$, we have the inclusion
    \begin{equation*}
        (I^{\sat})^*\subset
        (I^*)^{\sat}.
    \end{equation*}
\end{Proposition}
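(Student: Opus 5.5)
Since $(I^{\sat})^*$ is generated by the initial forms $a^*$ with $a\in I^{\sat}$, and $(I^*)^{\sat}$ is an ideal, it suffices to show that $a^*\in (I^*)^{\sat}$ for each nonzero $a\in I^{\sat}$. By Remark \ref{rmk-calc} (ii), fix $t$ with $I^{\sat}=I:\mathfrak{m}^t$, so that $a\mathfrak{m}^t\subset I$. The goal is to show that $a^*\mathfrak{M}^t\subset I^*$. This gives $a^*\in I^*:\mathfrak{M}^t\subset (I^*)^{\sat}$.

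Since $\gr_{\mathfrak{m}}(R)$ is standard graded, $\mathfrak{M}^t$ is generated by the homogeneous elements of degree $t$, that is, by the classes $b+\mathfrak{m}^{t+1}$ with $b\in\mathfrak{m}^t$. It is therefore enough to show $a^*\cdot(b+\mathfrak{m}^{t+1})\in I^*$ for each $b\in\mathfrak{m}^t$. If $b\in\mathfrak{m}^{t+1}$, this class is zero and there is nothing to prove. Otherwise $v(b)=t$ and $b+\mathfrak{m}^{t+1}=b^*$.

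The key step uses the domain hypothesis. Because $\gr_{\mathfrak{m}}(R)$ is a domain, $a^*b^*\neq 0$, so $a^*b^*$ lies in degree $v(a)+v(b)$ and is nonzero. The product $a^*b^*$ is the class of $ab$ modulo $\mathfrak{m}^{v(a)+v(b)+1}$, so $ab\notin\mathfrak{m}^{v(a)+v(b)+1}$. Since also $ab\in\mathfrak{m}^{v(a)+v(b)}$, we get $v(ab)=v(a)+v(b)$ and $(ab)^*=a^*b^*$. This multiplicativity of initial forms, which the paper recalls in the Comments above via $v$ being a valuation, is the only place the domain assumption enters. It is also the main point to check, since without it $a^*b^*$ could vanish or fail to be the initial form of $ab$.

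To conclude, $ab\in a\mathfrak{m}^t\subset I$, so $a^*b^*=(ab)^*\in I^*$ by the definition of the initial ideal. Hence $a^*\mathfrak{M}^t\subset I^*$, which gives $a^*\in(I^*)^{\sat}$ and therefore $(I^{\sat})^*\subset (I^*)^{\sat}$.
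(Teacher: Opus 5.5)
Your proof is correct and follows the paper's argument. You fix $t$ with $I^{\sat}=I:\mathfrak{m}^t$ and reduce to products $a^*b^*$ with $v(b)=t$; the domain hypothesis gives $a^*b^*=(ab)^*$, and $ab\in I$ finishes it. Your explicit reduction to the generators $a^*$ of $(I^{\sat})^*$ is slightly more careful than the paper, which treats an arbitrary nonzero element of $(I^{\sat})^*$ as a single initial form $y^*$.
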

\begin{proof}
    We begin by noting that for $t>0$,
    \begin{equation*}
        \mathfrak{M}^t = (\oplus_{i\geq 1}\mathfrak{m}^i/\mathfrak{m}^{i+1})^t
        =
        \oplus_{i\geq t}\mathfrak{m}^i/\mathfrak{m}^{i+1}.
    \end{equation*}
    This combined with the fact that $\gr_{\mathfrak{m}}(R)$ is generated in degree one, gives that for any ideal $J\subset \gr_{\mathfrak{m}}(R)$. we have
    \begin{equation*}
        J:\mathfrak{M}^t = J:(\mathfrak{m}^t/\mathfrak{m}^{t+1}) = \{f\in \gr_{\mathfrak{m}}(R) \mid f(\mathfrak{m}^t/\mathfrak{m}^{t+1})\subset J\}.
    \end{equation*}
    Let $t$ be a positive integer such that $I^{\sat} = I:\mathfrak{m}^t$ and simultaneously $(I^*)^{\sat} = I^*: \mathfrak{M}^t$. Consequently, $(I^*)^{\sat} = I: (\mathfrak{m}^t/\mathfrak{m}^{t+1})$.\\

    Let $0\neq f\in (I^{\sat})^*$ be of the form $f = y^*$ such that $y\in I^{\sat} (= I:\mathfrak{m}^t)$. Since $0\neq f = y^*$, we have $y\neq 0$, so that $v(y)\neq \infty$ by Krull's Intersection Theorem. It remains to show that $y^* \mathfrak{m}^t/\mathfrak{m}^{t+1}\subset I^*$. Let $0\neq g\in \mathfrak{m}^t/\mathfrak{m}^{t+1}$. Then $g = x+\mathfrak{m}^{t+1}$ for some $x\in R$ having the property that $v(x) = t$. This combined with our assumption that $\gr_{\mathfrak{m}}(R)$ is a domain yields the following
    \begin{equation*}
        y^*g
        = (y+\mathfrak{m}^{v(y)+1})(x+\mathfrak{m}^{t+1})
        = yx+\mathfrak{m}^{v(y) +t+1}
        =yx + \mathfrak{m}^{v(yx) +1}
        = (yx)^*.
    \end{equation*}
    To finish the proof, it only remains to show that $yx\in I$. On the other hand, $y\in I:\mathfrak{m}^t$ and $g\in \mathfrak{m}^t$, so that $yx\in (I:\mathfrak{m}^t)\mathfrak{m}^t\subset I$ and we are done.
\end{proof}

\begin{Corollary}\label{corr-gr1}
    Suppose that $\gr_{\mathfrak{m}}(R)$ is a domain. Then for any ideal in $R$
    \begin{equation*}
        [(I^{\sat})^*]^{\sat} = (I^*)^{\sat}
    \end{equation*}
\end{Corollary}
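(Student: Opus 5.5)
We prove the two inclusions separately, using only Proposition \ref{prop-gr1} and the fact that saturation in $\gr_{\mathfrak{m}}(R)$ is a closure operation. The latter is the graded analogue of Remark \ref{rmk-calc} (iv) and follows directly from $J^{\sat}=\cup_i J:\mathfrak{M}^i$. Concretely, if $J\subset K$ then $J^{\sat}\subset K^{\sat}$, and $(J^{\sat})^{\sat}=J^{\sat}$. Idempotence holds because $\gr_{\mathfrak{m}}(R)$ is Noetherian, so $J^{\sat}=J:\mathfrak{M}^t$ for large $t$. If $x\mathfrak{M}^s\subset J^{\sat}$, then $x\mathfrak{M}^{s+t}\subset J$, which gives $x\in J^{\sat}$.

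\textbf{Inclusion $\supseteq$.} Since $I\subset I^{\sat}$, every initial form of an element of $I$ is an initial form of an element of $I^{\sat}$. Hence $I^*\subset (I^{\sat})^*$. Applying monotonicity of saturation in $\gr_{\mathfrak{m}}(R)$ gives $(I^*)^{\sat}\subset [(I^{\sat})^*]^{\sat}$.

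\textbf{Inclusion $\subseteq$.} By Proposition \ref{prop-gr1}, which uses the hypothesis that $\gr_{\mathfrak{m}}(R)$ is a domain, we have $(I^{\sat})^*\subset (I^*)^{\sat}$. Saturating both sides and using monotonicity and idempotence yields
\begin{equation*}
[(I^{\sat})^*]^{\sat}\subset [(I^*)^{\sat}]^{\sat}=(I^*)^{\sat}.
\end{equation*}

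Combining the two inclusions gives the equality. The only point requiring care is idempotence of saturation in the graded ring, which is not literally covered by Remark \ref{rmk-calc} because that remark is stated for local rings. The Noetherian argument above handles it directly.
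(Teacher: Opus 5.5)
Your proposal is correct and takes the same route as the paper. For $\subseteq$, you saturate the inclusion of Proposition \ref{prop-gr1} and use idempotence; for $\supseteq$, you use $I^*\subset (I^{\sat})^*$ together with monotonicity of saturation. Your Noetherian argument for idempotence of saturation in $\gr_{\mathfrak{m}}(R)$ justifies a step the paper uses without comment.
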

\begin{proof}
    By applying saturation to both sides of the inclusion of Proposition \ref{prop-gr1}, we obtain
    \begin{equation*}
        [(I^{\sat})^*]^{\sat}\subset [(I^*)^{\sat}]^{\sat} = (I^*)^{\sat}.
    \end{equation*} Then since $(-)^*$ preserves inclusion,
    \begin{equation*}
        [(I^{\sat})^*]^{\sat}\subset (I^*)^{\sat} \subset [(I^{\sat})^*]^{\sat}.
    \end{equation*}
\end{proof}

\subsection{Generalized Monomial Ideals}

In this subsection we show that the commuting of saturation and taking the initial ideal in the graded ring always holds, at least when the base ring is regular, see Theorem \ref{thm gr}.

\begin{Definition}
    Let $S$ be a standard graded ring with homogeneous maximal ideal $\mathfrak{ M}$, then we define
    \begin{equation*}
        \depth (S) = \grade_{\mathfrak{M}}(S).
    \end{equation*}
 \end{Definition}

In the following, given a local ring $(R,\mathfrak m)$ and an $R$-regular sequence $\mathbf x$, we will use the notion of monomials and monomial ideals in $\mathbf x$ in the sense introduced in \cite[Definition 2.1]{genmon}. We also recall that for any regular local ring, if $\mathbf x$ is a minimal generating set for $\mathfrak m$, then $\mathbf x$ is an $R$-regular sequence.

\begin{Lemma}\label{lem-genmon}
    Let $(R,\mathfrak m)$ be a regular local ring, $\mathbf x=\{x_1,\ldots,x_d\}$ be a minimal generating set of $\mathfrak m$ and $I$ be a monomial ideal in $\mathbf{x}$. Then, $I^*$ is a monomial ideal generated by $f^*$ as $f$ varies over any monomial generating set of $I$. 
\end{Lemma}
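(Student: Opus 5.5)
Since $R$ is regular local with $\mathfrak m=(x_1,\ldots,x_d)$, I will use the standard identification $\gr_{\mathfrak m}(R)\cong k[X_1,\ldots,X_d]$, with $X_i=x_i^*$ the images of the $x_i$ in degree one. Because $\gr_{\mathfrak m}(R)$ is a polynomial ring, hence a domain, the map $a\mapsto a^*$ is multiplicative. In particular, for a monomial $f=x_1^{a_1}\cdots x_d^{a_d}$ we get $f^*=X_1^{a_1}\cdots X_d^{a_d}$, which is a monomial of degree $v(f)=\sum a_i$. So if $\{f_1,\ldots,f_s\}$ is any monomial generating set of $I$, then $L:=(f_1^*,\ldots,f_s^*)$ is a monomial ideal of $k[X]$ and is plainly contained in $I^*$. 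The whole task is to prove the reverse inclusion $I^*\subset L$.

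The key step is to show that every monomial $u=\mathbf x^{\mathbf a}$ in $I$ is divisible, as a monomial, by some $f_j$. To do this, I will pass to $R/I$ and use the theory of monomial ideals in a regular sequence from \cite[Definition 2.1]{genmon}: a monomial lies in a monomial ideal exactly when it is a multiple of one of the monomial generators. If this is not available in exactly this form there, I will prove it directly. The plan is to reduce modulo $I$ using the free-module structure of $\gr$: the monomials of degree $n$ form a $k$-basis of $\mathfrak m^n/\mathfrak m^{n+1}$. I will then argue by induction on degree that any $R$-linear combination $\sum r_jf_j$ equal to $u$ must have some $r_j$ with a term forcing divisibility. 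Concretely, I will compare initial forms in degree $|\mathbf a|$.

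Next, take an arbitrary $0\neq a\in I$ and write $a=\sum_j r_jf_j$. I want to show $a^*\in L$. I will expand each $r_j$ (up to $\mathfrak m^{N}$ for large $N$) as a $k$-combination of monomials with unit coefficients from a set of representatives of $k$. This expresses $a$ modulo $\mathfrak m^{v(a)+1}$ as a sum of monomials times units, each of which lies in $I$. By the key step, each such monomial is divisible by some $f_j$. The degree-$v(a)$ part of this expansion is then $a^*$, a $k$-combination of monomials $X^{\mathbf b}$ each divisible by some $f_j^*$, so $a^*\in L$.

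The subtle point is the "expand $a$ into monomials that lie in $I$" step. The expansion of $a$ itself need not have monomial terms in $I$; only the terms coming from $r_jf_j$ do. So I will instead expand each $r_j$, absorbing cancellations: the degree-$v(a)$ component of $\sum_j r_jf_j$ equals $\sum_j (\text{degree }v(a)-\deg f_j\text{ part of }r_j)^*f_j^*$ plus contributions that cancel. That makes it visibly in $L$, provided higher-order cancellation does not create terms outside $L$. I expect this bookkeeping to be the main obstacle. I would handle it by noting that the degree-$m$ component of $\sum_j r_jf_j$, computed in $k[X]$ after expanding the $r_j$ into homogeneous pieces, always lies in $L$. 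The component of lowest nonzero degree is exactly $a^*$, since lower components vanish identically.
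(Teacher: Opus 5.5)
The gap is in your last step. There you compute $\sum_j\tilde r_jX^{\alpha_j}$ in $k[X]$ and assert that its lowest nonzero component is $a^*$. That assertion is false in general, because your formal expansion $r\mapsto\tilde r$ is not additive.

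Your setup is fine: $\gr_{\mathfrak m}(R)\cong k[X_1,\ldots,X_d]$, $a\mapsto a^*$ is multiplicative, $f^*=X^{\alpha}$ for a monomial $f=\mathbf x^{\alpha}$, and $L=(f_1^*,\ldots,f_s^*)\subseteq I^*$. Your ``key step'' is never actually needed. The monomials $x^\beta f_j$ produced by expanding $r_jf_j$ are visibly multiples of $f_j$, and its sketched proof by comparing initial forms would hit the same difficulty described next.

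You expand each $r_j$ using a set $\Lambda$ of representatives of $k$ and pass to $\tilde r_j\in k[X]$. The claim about $\sum_j\tilde r_jX^{\alpha_j}$ is only justified if $r\mapsto\tilde r$ is additive, i.e.\ if $\Lambda$ is closed under addition. Otherwise, when representatives attached to the same monomial $x^\gamma$ add up to a nonzero element of $\mathfrak m$, the resulting carry lands in higher degree in $R$ but is simply discarded in $k[X]$.

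In mixed characteristic no additively closed $\Lambda$ exists, and the assertion fails. Take $R=\mathbb Z_p[[y]]$ ($\mathbb Z_p$ the $p$-adic integers), $\mathbf x=\{p,y\}$, $f_1=p$, $f_2=y$, $\Lambda=\{0,1,\ldots,p-1\}$, $r_1=y$, $r_2=(p-1)p$. Then $\tilde r_1=X_2$ and $\tilde r_2=-X_1$, so $\tilde r_1X_1+\tilde r_2X_2=0$ identically. But $a=r_1f_1+r_2f_2=p^2y\neq 0$, with $a^*=X_1^2X_2$. In equal characteristic you could rescue your argument by taking $\Lambda$ to be the image of a section $k\to R$ that is linear over the prime field. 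However, the lemma concerns arbitrary regular local rings.

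The missing idea is that carries can be absorbed inside $R$ without leaving the ideal. Let $n=v(a)$ and $J_n=\sum_j f_j\mathfrak m^{\max(0,\,n-|\alpha_j|)}$.

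In $a=\sum_j r_jf_j$, the terms with $|\alpha_j|\ge n$ already lie in $J_n$. Write the rest as $\sum_{\gamma\in G}c_\gamma x^\gamma$, with $G$ finite, $|\gamma|<n$, and each $x^\gamma$ a multiple of some $f_j$.

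Suppose $m=\min\{|\gamma| : c_\gamma\neq 0\}<n$. Then $a$, the $J_n$-part, and all terms with $|\gamma|>m$ lie in $\mathfrak m^{m+1}$. Linear independence of the degree-$m$ monomials in $\mathfrak m^m/\mathfrak m^{m+1}$ then forces $c_\gamma\in\mathfrak m$ whenever $|\gamma|=m$. Writing $c_\gamma=\sum_i c_{\gamma,i}x_i$ replaces $c_\gamma x^\gamma$ by $\sum_i c_{\gamma,i}x^{\gamma+e_i}$, whose monomials are multiples of the same $f_j$.

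The minimal degree strictly increases at each step. After at most $n$ steps every remaining monomial has degree $\ge n$ and hence lies in $J_n$. Thus $a\in J_n$ (indeed $I\cap\mathfrak m^n=J_n$), and so $a^*\in (J_n+\mathfrak m^{n+1})/\mathfrak m^{n+1}\subseteq L$.

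The paper's proof uses the same expand-and-take-lowest-degree strategy but stays inside $R$. It asserts that the element can be rewritten with pairwise distinct monomials $M_{i,j}f_i$ and unit coefficients, so nothing can cancel in lowest degree. Making that normalization precise (working modulo $\mathfrak m^N$ with $N>v(a)$) is exactly this absorption step.
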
 

\begin{proof}
    Let $f_1,\ldots,f_r$ be a monomial generating set of $I$ and let $f\in I\setminus \{0\}$. Then $f = \sum_{1\leq i\leq r}c_if_i$ for some nonzero $c_i\in R$. We have that $c_i = \sum_j a_{i,j}M_{i,j}$ for some monomials $M_{i,j}$ in $\mathbf{x}$ and nonzero $a_{i,j}\in R^{\times}$. Now after suitably reindexing the $f_i$, $f$ admits an expression $f = \sum_{1\leq j\leq p,1\leq i\leq s}a_{i,j}M_{i,j}f_i$ for some $s\leq r$, where all of the monomials $M_{i,j}f_i$, $1\leq i\leq s$, $1\leq j\leq p$, are distinct. 
    
    Let $v$ be the smallest $\mathfrak{m}$-adic order of all the monomials $M_{i,j}f_i$, which is well-defined since $R$ is regular implies that this order is a valuation. For a monomial $a\in R$ let $\text{ord}(a)$ denote this order.
    \begin{equation*}
        \begin{split}
            f^* &= \left(\sum_{1\leq j\leq p,1\leq i\leq s}a_{i,j}M_{i,j}f_i\right)^*
        = \left(0,\ldots,0, \sum_{1\leq j\leq p,1\leq i\leq s}a_{i,j}M_{i,j}f_i,0,\ldots\right)
        \\&=\sum_{1\leq j\leq p,1\leq i\leq s, \text{ord}(M_{i,j}f_i) = v}(a_{i,j}M_{i,j}f_i)^*
        =\sum_{1\leq j\leq p,1\leq i\leq s, \text{ord}(M_{i,j}f_i) = v}a_{i,j}^*M_{i,j}^*f_i^*.
        \end{split}
    \end{equation*}
    Therefore, $I^*$ is generated by the initial forms $f_1^*,\ldots,f_r^*$, which completes the proof.
\end{proof}

\begin{Lemma}\label{lem-star}
Let $(R,\mathfrak m)$ be a regular local ring, $\mathbf x=\{x_1,\ldots,x_d\}$ be a minimal generating set of $\mathfrak m$ and $A$, $B\subset R$ monomial ideals in $\mathbf{x}$. Then
\begin{equation*}
    A^*\cap B^*\subset (A\cap B)^*.
\end{equation*}
\begin{proof}
    Let $k = R/\mathfrak m$ and for $f = \prod_{1\leq i\leq d}x_i^{r_i}$ let $\text{ord}_{x_i}f:= r_i$. Then $\gr_{\mathfrak m}(R)$ is (isomorphic to) the polynomial ring $k[x_1,\ldots,x_d]$. Then by Lemma \ref{lem-genmon}, the monomial ideal $A^*\cap B^*$ is generated by the set $\{\gcd(F,G) \mid F\in  A^*,G\in B^*$. Now by \cite[Lemma 3]{genmon}, it is enough to show that for $f\in A$ and $g\in B$,
    \begin{equation*}
        \text{lcm}(f,g)^* = \text{lcm}(f^*,g^*),
    \end{equation*}
    where $\text{lcm}(f,g):= \prod_{1\leq i\leq d}x_i^{\max(\text{ord}_{x_i}(f),\text{ord}_{x_i}(g))}$, but this holds since $(ab)^* = a^*b^*$ for all $a,b\in R$.
\end{proof}
    
\end{Lemma}

\begin{Theorem}\label{thm gr}
    Let $(R,\mathfrak m)$ be a regular local ring, $\mathbf x=\{x_1,\ldots,x_d\}$ be a minimal generating set of $\mathfrak m$ and $I\subset R$ be a monomial ideal in $\mathbf x$. Then
    \begin{equation*}
        (I^*)^{\sat} = (I^{\sat})^*.
    \end{equation*}
In particular, $\operatorname{depth} \operatorname{gr}_{\mathfrak m}(R/I^{\sat})>0$. 
\end{Theorem}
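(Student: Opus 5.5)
The plan is to use that $\gr_{\mathfrak m}(R)\cong k[X_1,\ldots,X_d]$ with $X_i=x_i^*$. Since this is a domain, Proposition \ref{prop-gr1} already gives the inclusion $(I^{\sat})^*\subset (I^*)^{\sat}$. So only the reverse inclusion needs work. I would get it by computing both sides explicitly as intersections of colon ideals by powers of the variables, and comparing them generator by generator.

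\emph{Step 1 (reduce to colons by single variables).} By Lemma \ref{satlemma} applied to $Q=\mathfrak m=(x_1,\ldots,x_d)$, we have $I^{\sat}=\bigcap_{i=1}^d (I:x_i^t)$ for $t\gg 0$. The same cofinality argument works in $\gr_{\mathfrak m}(R)$, because $\mathfrak M=(X_1,\ldots,X_d)$ and the powers $(X_1^t,\ldots,X_d^t)$ are cofinal with the $\mathfrak M^t$. Hence $(I^*)^{\sat}=\bigcap_{i=1}^d (I^*:X_i^t)$ for $t\gg 0$. We may take one $t$ that works for both.

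\emph{Step 2 (colon by one variable commutes with $(-)^*$).} Let $f_1,\ldots,f_r$ be monomials in $\mathbf x$ generating $I$. By the monomial calculus of \cite{genmon}, $I:x_i^t$ is again a monomial ideal in $\mathbf x$, generated by the monomials $f_j/\gcd(f_j,x_i^t)$. By Lemma \ref{lem-genmon}, $I^*$ is generated by $f_j^*$, and $(I:x_i^t)^*$ is generated by $(f_j/\gcd(f_j,x_i^t))^*$. Since $(ab)^*=a^*b^*$ here, this equals $f_j^*/\gcd(f_j^*,X_i^t)$. In the polynomial ring these are exactly the generators of the monomial ideal $I^*:X_i^t$. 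Therefore $(I:x_i^t)^*=I^*:X_i^t$.

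\emph{Step 3 (intersections).} Each $I:x_i^t$ is a monomial ideal in $\mathbf x$. Iterating Lemma \ref{lem-star} gives $\bigcap_i (I:x_i^t)^*\subset \left(\bigcap_i (I:x_i^t)\right)^*$. The opposite inclusion is automatic, since $(-)^*$ preserves inclusions. Combining with Steps 1 and 2,
\begin{equation*}
(I^{\sat})^*=\Big(\bigcap_i (I:x_i^t)\Big)^*=\bigcap_i (I^*:X_i^t)=(I^*)^{\sat}.
\end{equation*}
This gives the equality without even needing Proposition \ref{prop-gr1}. I expect the main obstacle to be Step 2: one must check carefully that the generators from \cite{genmon} for the colon have initial forms equal to the polynomial-ring colon generators. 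The key input is multiplicativity of $(-)^*$ on monomials in a regular local ring.

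\emph{Depth statement.} We have $\gr_{\mathfrak m}(R/I^{\sat})\cong \gr_{\mathfrak m}(R)/(I^{\sat})^*$, and by the above $(I^{\sat})^*=(I^*)^{\sat}$ is a saturated ideal. If $I^{\sat}\neq R$, then $\mathfrak M$ is not associated to this quotient, so $\depth>0$. The degenerate case $I^{\sat}=R$, where the graded ring is zero, is excluded or holds by convention.
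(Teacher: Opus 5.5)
Your proposal is correct and follows essentially the paper's argument: both reduce saturation to $\bigcap_j(-:x_j^t)$ via Lemma \ref{satlemma}, compute colons of monomial ideals by the $\operatorname{lcm}/\gcd$ calculus of \cite{genmon} together with multiplicativity of initial forms, and use Lemma \ref{lem-star} to pass the intersection through $(-)^*$. The only difference is minor. You record each step as an equality and so get both inclusions at once, whereas the paper invokes Proposition \ref{prop-gr1} for the easy inclusion and leaves the depth consequence implicit, which you handle.
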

\begin{proof}
    By Proposition \ref{prop-gr1}, it remains to show that $(I^*)^{\sat}\subset (I^{\sat})^*$. Fix $t>0$ such that $I^{\sat} = I:\mathfrak{m}^t$ and $(I^*)^{\sat} = I^*:\oplus_{i\geq t}\mathfrak{m}^i/\mathfrak{m}^{i+1}$. Then by Lemma \ref{lem-genmon} and Lemma \ref{satlemma} (which is also valid for standard graded rings such as $\gr_{\mathfrak{m}}(R)$), $I^* = \cap_{1\leq j\leq d}(I^*: ([x_j^*]^t))$.
    
    Let $f_1,\ldots,f_r$ be monomial generators of $I$ in $\mathbf{x}$, so that by Lemma \ref{lem-genmon}, $I^* = (f_1^*,\ldots,f_r^*)$. By Lemma \ref{satlemma}, we have $I^{\sat} = \cap_{1\leq j\leq d} (f_1,\ldots,f_r): (x_j^t)$, which equals $\cap_{1\leq j\leq d}\sum_{1\leq i\leq r} \text{lcm}(f_i,x_j^t)/(x_j^t)$ by \cite[Proposition 1 on page 489]{genmon}. Consequently, by Lemma \ref{lem-star}
    \begin{equation*}
        \begin{split}
            (I^*)^{\sat} &= \bigcap_{1\leq j\leq d}(f_1^*,\ldots,f_r^*): ([x_j^*]^t)
            =\bigcap_{1\leq j\leq d}\sum_{1\leq i\leq r}(f_i^*): ([x_j^*]^t)
            \\&=\bigcap_{1\leq j\leq d}\sum_{1\leq i\leq r}\text{lcm}(f_i^*,[x_j^*]^t)/[x_j^*]^t
            =\bigcap_{1\leq j\leq d}\sum_{1\leq i\leq r}\text{lcm}(f_i^*,[x_j^t]^*)/[x_j^t]^*
            \\&=
            \bigcap_{1\leq j\leq d}\sum_{1\leq i\leq r}[(\text{lcm}(f_i,[x_j^t])/[x_j^t])^*]
            \subset
            \bigcap_{1\leq j\leq d}\left(\left(\sum_{1\leq i\leq r}(\text{lcm}(f_i,[x_j^t])/[x_j^t])\right)^*\right)
            \\&
            \subset
            \left(\bigcap_{1\leq j\leq d}\sum_{1\leq i\leq r}(\text{lcm}(f_i,[x_j^t])/[x_j^t])\right)^* = (I^{\sat})^*
        \end{split}
    \end{equation*}
    (the first non-equality inclusion holds, since for a set $A$ of monomials in $\textbf{x}$, the proof of Lemma \ref{lem-genmon} shows that $\left(\sum_{f\in A}f\right)^* = \sum_{f\in A}(f^*)$ ). This completes the proof of the theorem.
\end{proof}

Theorem \ref{thm gr} states that the formula (\ref{eq-main}) holds for any ideal in a regular local ring where $S = \gr_{\mathfrak{m}}(R)$, in contrast to the dependence on $R$ when $S$ is a quotient of $R$ of Theorem \ref{thmA} or a flat injective codomain of $R$ in Theorem \ref{thmB}.

\subsection*{Acknowledgments}

The authors would like to thank Ian Aberbach, Dale Cutkosky, Rankeya Datta, and Sudipta Das for their advice and helpful comments. The authors would also like to thank Anurag Singh for bringing the ring of Example \ref{ex1} to our attention. The second author thanks the Center for Mathematical Sciences and Applications for their support during the writing of this manuscript. The second author was partially supported by the Israel Science Foundation grant ISF-687/24.


\bibliographystyle{amsplain}

\bibliography{biblio}

\end{document}